\theoremstyle{plain}
\newtheorem{thm}{Theorem}
\newtheorem{prop}[thm]{Proposition}
\newtheorem{lem}[thm]{Lemma}
\theoremstyle{definition}
\newtheorem*{df}{Definition}
\DeclareMathOperator{\Int}{Int}
\DeclareMathOperator{\Rect}{Rect}
\DeclareMathOperator{\Pent}{Pent}
\DeclareMathOperator{\Hex}{Hex}
\DeclareMathOperator{\Cone}{Cone}
\newcommand{\tb}{\textbf}
\newcommand{\ts}{\textstyle}
\newcommand{\bb}{\mathbb}
\newcommand{\Z}{\mathbb{Z}}
\newcommand{\bo}[1]{\boldsymbol{#1}}
\title{A combinatorial proof of invariance of\\double-point enhanced grid homology}
\author{Timothy Ratigan, Joshua Wang, Luya Wang}
\date{}
\begin{document}
\maketitle
\begin{abstract}
	We prove that the ``minus'' version of Lipshitz's double-point enhanced grid homology is a knot invariant through purely combinatorial means. 
\end{abstract}

\tableofcontents
\section{Introduction}

\noindent \indent Knot Floer homology \cite{OS04} \cite{Ras03} was originally defined by a certain count of pseudo-holomorphic curves. In 2009, Manolescu, Ozsv\'ath, and Sarkar gave a combinatorial reformulation of knot Floer homology for links in $S^3$, which is now known as grid homology \cite{MOS09}. Not only can the invariant be computed combinatorially, the fact that it is a knot invariant can also be proven fully combinatorially \cite{MOST07}. One of the major applications of Khovanov homology was J. Rassmusen's combinatorial proof \cite{Ras10} of the Milnor conjecture, which was originally proven by Kronheimer and Mrowka using gauge theory \cite{KM93}. Grid homology provides another purely combinatorial proof of the Milnor conjecture \cite{Sar11}. 

In 2006, Lipshitz defined an invariant for knots that generalizes knot Floer homology \cite{Lip06}. The invariant arises by allowing certain double-points in the psuedo-holomorphic curves that are counted in his cylindrical reformulation of Heegaard Floer homology. Lipshitz also showed that this double-point enhanced invariant may also be computed from a grid diagram combinatorially \cite{Lip09}. In this paper, we give a purely combinatorial proof that double-point enhanced grid homology is a knot invariant. 
Unfortunately, there are no known examples for which double-point enhanced grid homology provides strictly more information than ordinary grid homology. We hope that a combinatorial account of double-point enhanced grid homology may lead to a better understanding of the relationship between ordinary and double-point enhanced grid homology. 

Grid homology is actually a package of invariants. The simplest case is the ``hat'' invariant or the ``simply blocked'' invariant $\widehat{GH}(K)$ corresponding to the pseudo-holomorphic invariant $\widehat{HFK}(K)$. 
In \cite{Lip06} and \cite{Lip09}, Lipshitz focuses on the double-point enhanced theory of the ``hat'' invariant. 
There is a ``minus'' invariant or ``unblocked'' invariant of grid homology $GH^-(K)$ that is more complicated but contains more information. We prove through purely combinatorial means that the double-point enhanced theory of the ``minus'' version is a knot invariant. Throughout, we work with coefficients in $\bb F = \Z/2\Z$.

In Section~\ref{sec:background}, we review grid diagrams and the definition of grid homology. We also define the chain complex corresponding to the double-point enhanced ``minus'' version of grid homology, and we give an example to illustrate why the combinatorial arguments used to prove that ordinary grid homology is a knot invariant fail to work in the double-point enhanced context. This example should motivate the construction in Section~\ref{sec:4-fold toroidal} of an isomorphic chain complex that counts certain combinatorially defined objects in a $4$-fold cover of the original grid diagram. The main combinatorial arguments used to prove invariance of ordinary grid homology may be adapted to prove invariance of double-point enhanced grid homology when working in this $4$-fold cover. We carry out this argument in Section~\ref{sec:proof of invariance}.

\section{Background}\label{sec:background}

\subsection{Grid diagrams and grid homology}

\noindent \indent Recall that a \textit{planar grid diagram} $\mathbb{G}$ with \textit{grid number} $n$ consists of an $n\times n$ grid of squares, where each row and each column contains exactly one $O$ marking and exactly one $X$ marking in such a way that no square is marked with both an $O$ and an $X$ marking. Typically, opposite edges of the $n\times n$ grid are identified so that $\bb{G}$ is thought of as lying on a torus. Such a diagram is called a \textit{(toroidal) grid diagram} and a choice of an $n\times n$ grid in the plane with appropriate $O$ and $X$ markings is called a \textit{fundamental domain} of the grid diagram $\bb{G}$. The set of squares marked with an $O$ is denoted $\bb{O}$ while the set of squares marked by an $X$ is denoted $\bb{X}$. We order the $O$ and $X$ markings $\bb{O} = \{O_i\}_{i=1}^n$ and $\bb{X} = \{X_i\}_{i=1}^n$. 

A grid diagram $\bb{G}$ represents an oriented link by the following convention. In each column, we draw an oriented segment from the $X$-marking to the $O$-marking; and in each row, we draw an oriented segment from the $O$-marking to the $X$-marking. At each crossing, the vertical segment lies above the horizontal segment. This determines a link projection for an oriented link $L$ and we say that $\bb{G}$ is a grid diagram representing $L$. See Figure~\ref{fig:trefoil} for an example of a grid diagram for the right-handed trefoil. Every oriented link admits a grid diagram for some $n$ (see Lemma 3.1.3 of \cite{OSS15}).

\begin{figure}[!ht]
	\centering
	\includegraphics[width=.35\textwidth]{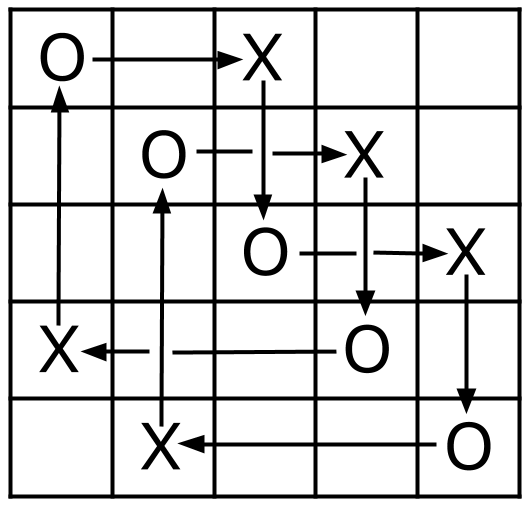}
	\caption{Grid diagram representing the right-handed trefoil}\label{fig:trefoil}
\end{figure}

The horizontal lines in a planar diagram correspond to circles in the toroidal diagram which we label as the $\bo{\alpha} = \{\alpha_i\}_{i=1}^n$ circles where they are ordered from bottom to top in the planar diagram. Similarly, the vertical circles in the toroidal diagram are labelled $\bo{\beta} = \{\beta_i\}_{i=1}^n$ from left to right in the planar diagram. At each point in the toroidal grid diagram, there are preferred directions \textit{north, south, east}, and \textit{west} where the north and south are distinguished directions parallel to the $\bo{\beta}$ circles, and east and west are distinguished directions parallel to the $\bo{\alpha}$ circles. 

A \textit{grid state} $\tb{x}$ is a set of $n$ points $\tb{x} = \{x_1,\ldots,x_n\}$ in the toroidal grid diagram, such that $x_i \in \alpha_i \cap \beta_{\sigma(i)}$ where $\sigma$ is a permutation on $n$ elements. The set of grid states of the toroidal grid diagram $\bb{G}$ is denoted $\tb{S}(\bb{G})$. If $\tb{x},\tb{y} \in \tb{S}(\bb G)$, then a \textit{rectangle from} $\tb{x}$ \textit{to} $\tb{y}$ is an embedded rectangle $r$ in the toroidal diagram $\bb{G}$ whose boundary lies in the union of the horizontal and vertical circles in such a way that \begin{align*}
	\partial r \cap (\alpha_1 \cup \cdots \cup \alpha_n) &= \tb{y} - \tb{x}\\
	\partial r \cap (\beta_1 \cup \cdots \cup \beta_n) &= \tb{x} - \tb{y}
\end{align*}where $\partial r$ denotes the oriented boundary of $r$ and where $\tb{y} - \tb{x}$ is thought of as a formal sum of points. See Figure~\ref{fig:rect} for an example of a rectangle between grid states. 

\begin{figure}[!ht]
	\centering
	\includegraphics[width=.4\textwidth]{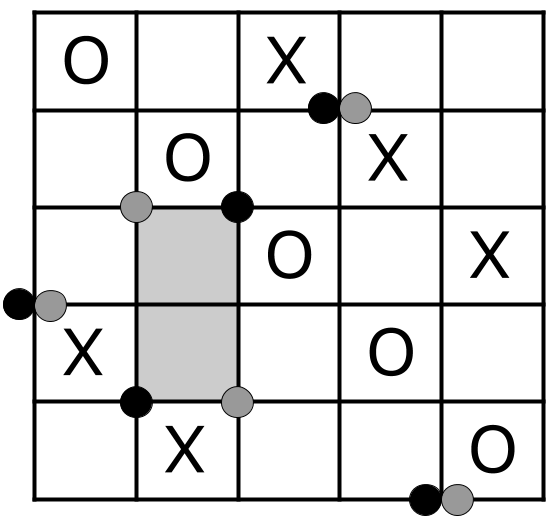}
	\captionsetup{width=.8\linewidth}
	\caption{A rectangle from the grid state consisting of the black dots to the grid state consisting of the gray dots.}
	\label{fig:rect}
\end{figure}

The set of rectangles from $\tb{x}$ to $\tb y$ is denoted $\Rect(\tb x,\tb y)$. If $r$ is a rectangle from $\tb x$ to $\tb y$, for each $i = 1,\ldots,n$, we set $O_i(r) = \#(r \cap O_i) \in \{0,1\}$ and $X_i(r) = \#(r \cap X_i) \in \{0,1\}$. We write $r \cap \bb{X} = \emptyset$ when $X_i(r) = 0$ for $i = 1,\ldots,n$. If $\Int(r)$ denotes the interior of the rectangle $r$, then note that $\Int(r) \cap \tb x = \Int(r) \cap \tb y$. We say that $r$ is \textit{empty} if $\Int(r) \cap \tb{x} = \emptyset$. The set of empty rectangles from $\tb x$ to $\tb y$ is denoted $\Rect^\circ(\tb x,\tb y)$. 

We may now define the ``minus'' version of grid homology. Let $\bb{G}$ be a grid diagram with grid number $n$ representing the oriented knot $K$. Let $GC^-(\bb{G})$ be the free module over $\bb F[V_1,\ldots,V_n]$ where $\bb F = \Z/2\Z$ with basis the set $\tb S(\bb G)$. We define a $\bb F[V_1,\ldots,V_n]$-linear differential $\partial^-\colon GC^-(\bb G) \to GC^-(\bb G)$ by \[
	\partial^-(\tb x) = \sum_{\tb y \in \tb S(\bb G)}\: \sum_{\substack{r \in \Rect^\circ(\tb x,\tb y)\\r \cap \bb X = \emptyset}} V_1^{O_1(r)}\cdots V_n^{O_n(r)}\cdot \tb y.
\]This map is indeed a differential, and each $V_i$ induced the same map $U$ on homology. The homology of this chain complex, thought of as an $\bb F[U]$-module, is a knot invariant denoted $GH^-(K)$. There is also a bigrading on grid states that the differential respects that makes $GH^-(K)$ into a bigraded invariant of the knot.

\subsection{The double-point enhanced grid complex}

\noindent \indent We give the definition of the \textit{double-point enhanced grid complex} as described in Section 5.5 of \cite{OSS15}. Fix a toroidal grid diagram $\bb G$ with grid number $n$ representing an oriented knot $K$. Let $GC^\bullet(\bb G)$ be the free module over $\bb F[V_1,\ldots,V_n,v]$ where $\bb F = \Z/2\Z$ with basis the set $\tb S(\bb G)$ of grid states of $\bb G$. For a rectangle $r \in \Rect(\tb x,\tb y)$, set $m(r) = \#(\Int(r) \cap \tb x)$. The $\bb F[V_1,\ldots,V_n,v]$-module endomorphism $\partial^\bullet\colon GC^\bullet(\bb G) \to GC^\bullet(\bb G)$ is defined by \[
    \partial^\bullet(\tb x) = \sum_{\tb y \in \tb S(\bb G)}\: \sum_{\substack{r \in \Rect(\tb x,\tb y)\\r \cap \bb X = \emptyset}} V_1^{O_1(r)}\cdots V_n^{O_n(r)}v^{m(r)}\cdot\tb y.
\]It is possible to verify that $\partial^\bullet$ is a differential by simply applying the same argument used for $\partial^-$ (Lemma 4.6.7 of \cite{OSS15}). However, translating the argument used in ordinary grid homology to show that the action of $V_i$ is independent of $i$ to the double-point enhanced setting is more difficult. The essential difficulty which arises here also arises in nearly every subsequent argument in the proof of invariance. We explain the difficulty here to motivate our construction in the next section. 

We recall the main idea of the proof that $\partial^-\circ\partial^- = 0$. Given grid states $\tb x,\tb y,\tb z \in \tb S(\bb G)$ and rectangles $r \in \Rect^\circ(\tb x,\tb y), r' \in \Rect^\circ(\tb y,\tb z)$, the two rectangles juxtapose to form a \textit{domain} from $\tb x$ to $\tb z$. The main argument is that each domain from $\tb x$ to $\tb z$ which arises actually arises as the juxtaposition of two different pairs of rectangles. See Figure~\ref{fig:rectdecomp} for an example. The coefficient of $\tb{z}$ in $(\partial^-\circ\partial^-)(\tb x)$ is a multiple of $2$ and since we are working with $\bb F = \Z/2\Z$-coefficients, we find that $\partial^-\circ\partial^- = 0$. 

\begin{figure}[!ht]
	\centering
	\includegraphics[width=.4\textwidth]{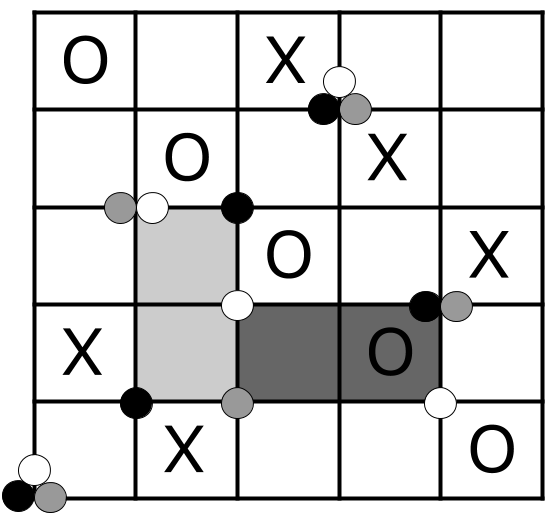}
	\includegraphics[width=.4\textwidth]{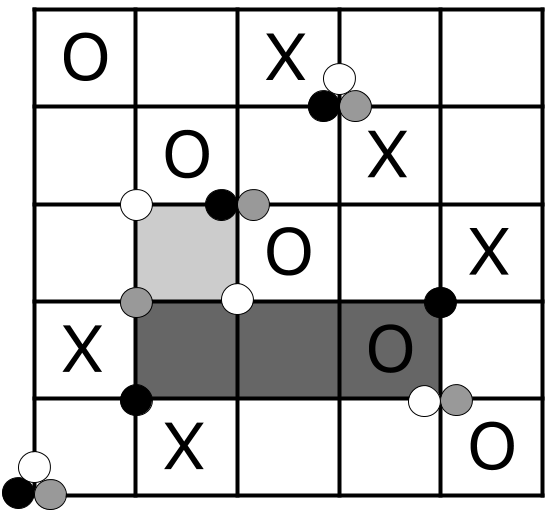}
	\captionsetup{width=.8\linewidth}
	\caption{Two different ways of writing a given domain as the juxtaposition of two rectangles.}
	\label{fig:rectdecomp}
\end{figure}

In order to establish that each domain arises in two different ways, one observes that such a domain must be L-shaped with a single $270^\circ$ angle. At the $270^\circ$ angle, the two cuts into the interior of the domain form the two rectangle juxtapositions. 
In the double-point enhanced setting, the domains which arise no longer have to be embedded L-shaped regions. See Figure~\ref{fig:orectdecomp} for an instructive example. 
\begin{figure}[!ht]
	\centering
	\includegraphics[width=.4\textwidth]{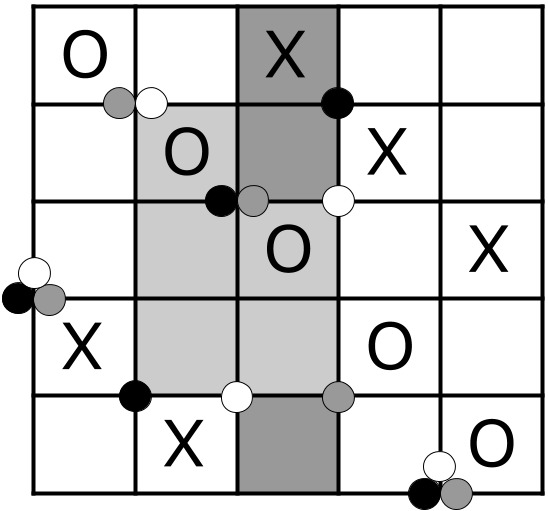}
	\includegraphics[width=.4\textwidth]{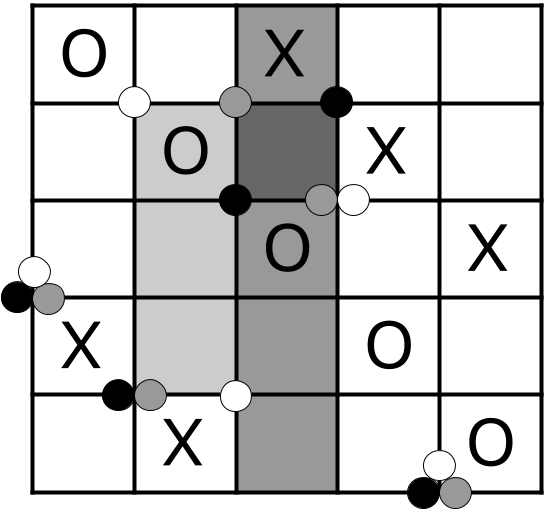}
	\captionsetup{width=.8\linewidth}
	\caption{The figure on the left is a juxtaposition of two rectangles in the double-point enhanced setting. It is a rectangle from the black grid state to the gray grid state, followed by a rectangle from the gray grid state to the white grid state. It does not arise as the juxtaposition of two rectangles in any other way. The natural attempt to cut the self-overlapping L-shaped region in the other direction, shown on the right, does not produce a valid justaposition; one of the rectangles overlaps itself.}
	\label{fig:orectdecomp}
\end{figure} 
In the ordinary setting, the condition that the rectangles be empty excludes these non-embedded domains. In the particular argument that $\partial^\bullet\circ\partial^\bullet = 0$, these domains are actually also excluded by the condition that $r \cap \bb X = \emptyset$. However, the argument that the action of $V_i$ on homology is independent of $i$ uses an endomorphism of $GC^-(\bb G)$ that counts rectangles containing a specified $X_i$ marking. In this setting, domains such as the one appearing in Figure~\ref{fig:orectdecomp} indeed arise. The arguments necessary for a proof of invariance also require admitting these sorts of domains, so the natural attempt to reapply the same arguments in the double-point enhanced context do not work in the straightforward manner. The main issue is that many of the resulting domains which were required to be embedded may now overlap. However, they will only overlap once, so our remedy is to work in a certain $4$-fold cover (which is a $2$-fold cover in both the horizontal direction and the vertical direction) where the relevant domains will actually be embedded so that the arguments in the ordinary setting apply.

\section{4-fold toroidal grid diagrams}\label{sec:4-fold toroidal}

Choose a fundamental domain for $\bb G$ so that we obtain an $n\times n$ planar grid diagram $\bb P$. 
A $2n \times 2n$ planar grid has four $n \times n$ quadrants.
Let $\bb P_4$ be the $2n \times 2n$ planar grid where each of the four quadrants has the $O$ and $X$ markings of $\bb P$. Note that each row and column of $\bb P_4$ has exactly two $O$-markings and two $X$-markings. Let $\bb G_4$ be the toroidal grid obtained by identifying the top boundary segment of $\bb P_4$ with the bottom one, and the left boundary segment with the right one. 
The choice of fundamental domain for $\bb G$ amounts to a choice of fundamental domain for $\bb G_4$. We call $\bb G_4$ the \emph{4-fold toroidal grid diagram} associated to $\bb G$ representing $K$. The horizontal and vertical segments in $\bb P_4$ which separate the rows and columns become horizontal and vertical circles, which we label as $\bo{\alpha}^4 = \{\alpha_i^4\}_{i=1}^{2n}$ and $\bo{\beta}^4 = \{\beta_i^4\}_{i=1}^{2n}$. Just as on $\bb G$, at each point of $\bb G_4$ there are four preferred directions, thought of as \emph{north, south, east}, and \emph{west}. At the intersection of $\alpha_i^4$ and $\beta_j^4$, north and south are distinguished directions along $\beta_j^4$ while east and west are distinguished directions along $\alpha_i^4$. 

Let $\pi_4\colon \bb G_4 \to \bb G$ be the obvious covering map. Note that $\pi_4$ sends each $O$-marking to an $O$-marking, and each $X$-marking to an $X$-marking. Additionally, $\bo{\alpha}^4$ and $\bo{\beta}^4$ on $\bb G_4$ are sent to $\bo{\alpha}$ and $\bo{\beta}$ on $\bb G$. A \emph{grid state} for $\bb G_4$ is a collection of points on $\bb G_4$ which forms the preimage of a grid state of $\bb G$ under $\pi_4$. In particular, the grid states of $\bb G$ are in bijective correspondence with those of $\bb G_4$. We let $\tb x_4$ be the grid state of $\bb G_4$ associated to the grid state $\tb x$ of $\bb G$. The set of grid states of $\bb G_4$ is denoted $\tb S_4(\bb G_4)$. Let $N\colon \bb G_4 \to \bb G_4$ be northward translation by $n$ rows, and let $E\colon \bb G_4 \to \bb G_4$ be eastward translation by $n$ columns. Then $\pi_4\circ N = \pi_4 = \pi_4\circ E$, and $N^2 = \mathrm{Id} = E^2$ and $NE = EN$. Also, for any point $q\in \bb G_4$, we have that $\pi_4^{-1}(\pi_4(q)) = \{q,N(q),E(q),NE(q))\}$. We say that these four points are equivalent. 

\begin{df}[Rectangles]
A \emph{rectangle} $r_4$ in $\bb G_4$ is an embedded rectangle in the torus $\bb G_4$ whose boundary lies in the union of the horizontal and vertical circles. The images $N(r_4)$, $E(r_4)$, and $NE(r_4)$ of a rectangle $r_4$ are also rectangles. The four rectangles $r_4, N(r_4), E(r_4)$, and $NE(r_4)$ are all distinct, and we declare these four rectangles equivalent. The equivalence class of $r_4$ will be denoted $[r_4] = \{r_4, N(r_4), E(r_4), NE(r_4)\}$. Let $\partial_\alpha r_4$ denote $\partial r_4 \cap (\alpha_1^4 \cup \cdots \cup \alpha_{2n}^4)$ with the induced orientation, and similarly let $\partial_\beta r_4 = \partial r_4 \cap (\beta_1^4 \cup \cdots \cup \beta_{2n}^4)$ with the induced orientation. Given grid states $\tb x_4$ and $\tb y_4$ of $\bb G_4$, we say that $r_4$ is a \emph{rectangle from $\mathbf x_4$ to $\mathbf y_4$} if \begin{align*}
    \partial(\partial_\alpha r_4) + \partial(\partial_\alpha N(r_4)) + \partial(\partial_\alpha E(r_4)) + \partial(\partial_\alpha NE(r_4)) &= \tb y_4 - \tb x_4\\
    \partial(\partial_\beta r_4) + \partial(\partial_\beta N(r_4)) + \partial(\partial_\beta E(r_4)) + \partial(\partial_\beta NE(r_4)) &= \tb x_4 - \tb y_4
\end{align*}where $\tb x_4 - \tb y_4$ is thought of as a formal sum of points. If $r_4$ is a rectangle from $\tb x_4$ to $\tb y_4$, then so are $N(r_4), E(r_4)$, and $NE(r_4)$, so we say that $[r_4]$ is an equivalence class of rectangles from $\tb x_4$ to $\tb y_4$. We denote the set of equivalence classes of rectangles from $\tb x_4$ to $\tb y_4$ by $[\Rect](\tb x_4,\tb y_4)$. 
\end{df}

\begin{figure}[!ht]
	\centering
	\includegraphics[width=.6\textwidth]{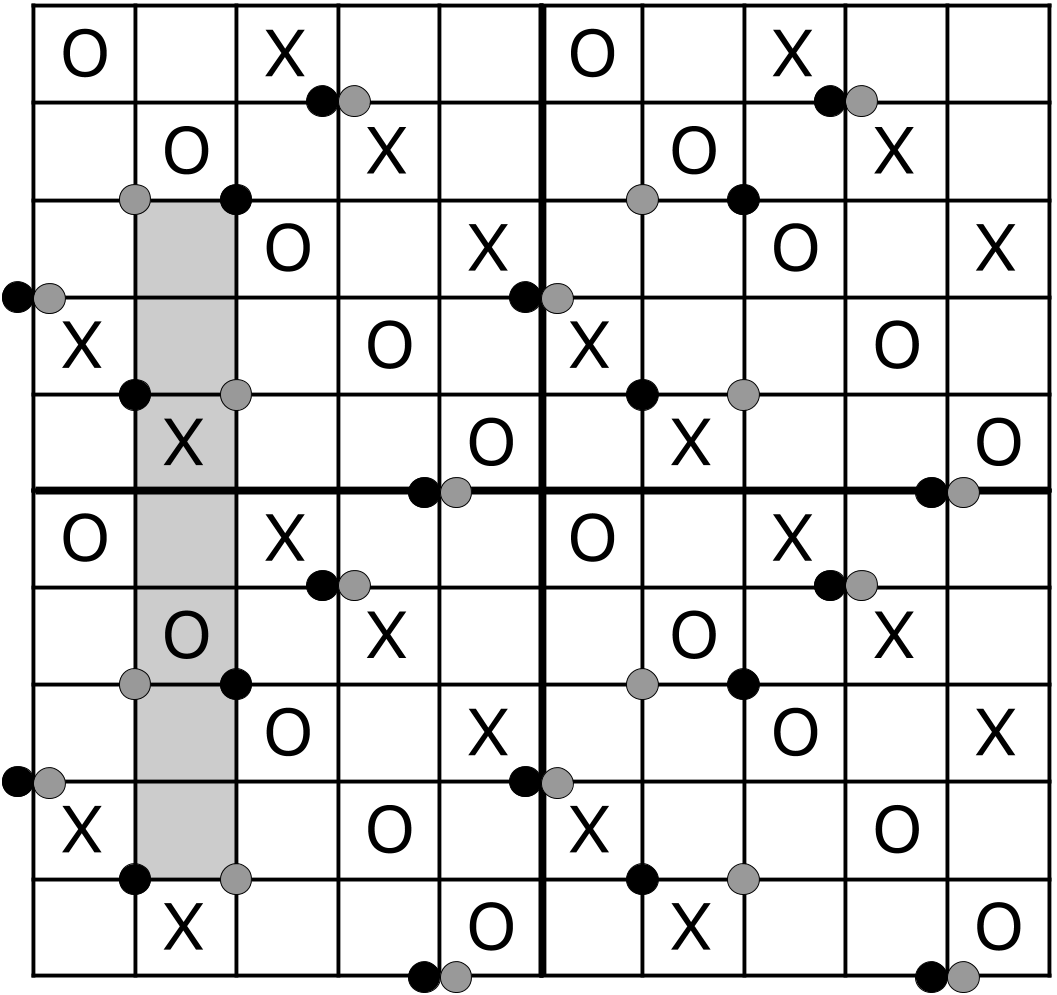}
	\captionsetup{width=.8\linewidth}
	\caption{A rectangle in $\bb G_4$ whose image under the projection $\pi_4\colon \bb G_4 \to \bb G$ is not a rectangle.}
	\label{fig:4fold}
\end{figure}

Note that a rectangle $r\in\Rect(\tb x,\tb y)$ in $\bb G$ determines an equivalence class $[r_4]$ of rectangles in $\bb G_4$ from $\tb x_4$ to $\tb y_4$. The equivalence class $[r_4]$ is uniquely determined by the property that $\pi_4(r_4) = r$. In this way we have an injective map $\Rect(\tb x,\tb y) \hookrightarrow [\Rect](\tb x_4,\tb y_4)$. See Figure~\ref{fig:4fold} for an example of a rectangle in $\bb G_4$ whose equivalence class in $[\Rect](\tb x_4,\tb y_4)$ does not lie in the image of $\Rect(\tb x,\tb y)$. 

\begin{df}[Domains]
The horizontal and vertical circles on $\bb G_4$ divide the torus into $(2n)^2$ squares. Any formal linear combination $\psi_4$ of the closures of these squares is called a \emph{domain} of $\bb G_4$. If $\psi_4 = \sum_i c_iD_i$ where $c_i \in \Z$ and $D_i$ is a square for $i = 1,\ldots,(2n)^2$, then let $N(\psi_4)$ be the domain $\sum_i c_i N(D_i)$. Similarly, we define $E(\psi_4)$ to be $\sum_i c_i E(D_i)$. We declare $\psi_4$, $N(\psi_4)$, $E(\psi_4)$, and $NE(\psi_4)$ to be equivalent domains. We say that $\psi_4$ is a \emph{domain from $\mathbf x_4$ to $\mathbf y_4$} if $\partial(\partial_\alpha(\psi_4 + N(\psi_4) + E(\psi_4) + NE(\psi_4))) = \tb y_4 - \tb x_4$ and $\partial(\partial_\beta(\psi_4 + N(\psi_4) + E(\psi_4) + NE(\psi_4))) = \tb x_4 - \tb y_4$ where $\partial_\alpha \psi_4 = \sum_i c_i \partial_\alpha D_i$ and $\partial_\beta \psi_4 = \sum_i c_i \partial_\beta D_i$. If $\psi_4$ is a domain from $\tb x_4$ to $\tb y_4$, then so are $N(\psi_4), E(\psi_4)$, and $NE(\psi_4)$, so in this case we say that $[\psi_4] = \{\psi_4,N(\psi_4),E(\psi_4),NE(\psi_4)\}$ is an equivalence class of domains from $\tb x_4$ to $\tb y_4$. 
We let $[\pi](\tb x_4,\tb y_4)$ denote the set of equivalence classes of domains from $\tb x_4$ to $\tb y_4$. 
\end{df}

Enumerate the $O$-markings on $\bb G$ as $\bb O = \{O_i\}_{i=1}^n$. For each $O_i$, there are exactly four squares $D_{i1},D_{i2},D_{i3},D_{i4}$ in $\bb G_4$ that contain an $O$-marking in $\pi_4^{-1}(O_i)$. These squares are equivalent when thought of as rectangles. The \emph{multiplicity} $O_i(\psi_4)$ of a domain $\psi_4 = \sum_i c_i D_i$ in $\bb G_4$ is $c_{i1} + c_{i2} + c_{i3} + c_{i4}$. Clearly multiplicity respects the equivalence relation on domains so $O_i[\psi_4]$ is well-defined. Note that when $\psi_4$ is a rectangle, we have that $O_i[\psi_4] = \#(\pi_4^{-1}(O_i) \cap \psi_4) \in \{0,1,2,3,4\}$. We define in a similar manner $X_i[r]$ after enumerating $\bb X = \{X_i\}_{i=1}^n$. We define the \emph{point-measure} $p_{\tb x_4}(\psi_4)$ of a domain $\psi_4 = \sum_i c_iD_i$ with respect to a grid state $\tb x_4$ of $\bb G_4$. For each point $x_i \in \tb x_4$, let $p_{x_i}(\psi_4)$ be the average of the multiplicities of $\psi_4$ in the four regions neighboring $x_i$. Then $p_{\tb x_4}(\psi_4) = \sum_{i=1}^{4n} p_{x_i}(\psi_4)$. Point-measure respects the equivalence relation on domains so we use the notation $p_{\tb x_4}[\psi_4]$. 

For $[r_4] \in [\Rect](\tb x_4,\tb y_4)$, let \[
    m[r_4] = \frac{p_{\tb x_4}[r_4] + p_{\tb y_4}[r_4] - 1}{2}.
\]If $[r_4]$ is the equivalence class of rectangles in $\bb G_4$ determined by a rectangle $r \in \Rect(\tb x,\tb y)$ in $\bb G$, then $m[r_4] = m(r) = \#(\Int(r) \cap \tb x)$ so $m[r_4]$ is a nonnegative integer. In the general case, we consider the value of $p_{\tb x_4}[r_4] + p_{\tb y_4}[r_4]$. If any two of the corners of $r_4$ are equivalent, then all four corners are equivalent. The rectangle is a square and $\tb x_4 = \tb y_4$ so $p_{\tb x_4}[r_4] + p_{\tb y_4}[r_4] = 2p_{\tb x_4}[r_4]$. The four corners contribute $1$ to $p_{\tb x_4}[r_4]$ and all other points of $\tb x_4$ do not lie on the boundary of $r_4$. Hence $p_{\tb x_4}[r_4]$ is an integer and $p_{\tb x_4}[r_4] + p_{\tb y_4}[r_4]$ is even. 

\begin{lem}\label{lem:computations of m}
If $\mathbf x_4 \neq \mathbf y_4$, then $m[r_4] = (p_{\mathbf x_4}[r_4] + p_{\mathbf y_4}[r_4] - 1)/2$ is a nonnegative integer.
\end{lem}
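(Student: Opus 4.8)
The plan is to analyze the possible shapes of a rectangle $r_4$ in $\bb G_4$ representing a nonidentity class, and to show in each case that $p_{\tb x_4}[r_4] + p_{\tb y_4}[r_4]$ is an odd integer at least $1$. First I would fix a representative $r_4$ and observe that, since $[r_4] = \{r_4, N(r_4), E(r_4), NE(r_4)\}$ consists of four distinct rectangles, each point of $\tb x_4$ (resp. $\tb y_4$) lies on at most one of the four rectangles' boundaries, and its contribution to $p_{\tb x_4}[r_4]$ is the corresponding average of local multiplicities. A point contributes a half-integer exactly when it lies on an edge of $r_4$ but is not a corner; it contributes an integer ($0$ or $1$) when it is a corner of $r_4$ or lies off $\partial r_4$ entirely. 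So the parity of $p_{\tb x_4}[r_4] + p_{\tb y_4}[r_4]$ is governed entirely by the half-integer contributions, i.e. by the grid-state points lying in the interiors of the edges of the four rectangles in $[r_4]$.

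The key combinatorial step is then a counting argument on these edge-interior points. Because $r_4$ is a rectangle from $\tb x_4$ to $\tb y_4$ and the defining equations force $\tb x_4$ and $\tb y_4$ to agree away from the corners of the four rectangles, the grid-state points in the interior of $\partial_\alpha r_4$ are the same whether counted via $\tb x_4$ or via $\tb y_4$; likewise for $\partial_\beta r_4$. I would show that each such edge-interior point contributes a total of exactly $1$ (a half from each of two adjacent squares, one from each rectangle's average, appearing symmetrically in the $\tb x_4$ and $\tb y_4$ sums), so their net contribution to $p_{\tb x_4}[r_4] + p_{\tb y_4}[r_4]$ is an integer, and in fact the sum $p_{\tb x_4}[r_4] + p_{\tb y_4}[r_4]$ is an integer. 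For the parity and positivity: the four corners of $r_4$ together contribute a total of $1$ to $p_{\tb x_4}[r_4] + p_{\tb y_4}[r_4]$ (each corner of $r_4$ is a corner of exactly one of $\tb x_4, \tb y_4$'s local pictures, where the average of the four surrounding multiplicities is $\tfrac14$, summed over four corners and the two states with the appropriate pattern of $0$'s gives $1$), while every other point contributes an even amount; hence the sum is an odd integer, and it is at least $1$ since the corner contribution already forces it. This shows $m[r_4] = (p_{\tb x_4}[r_4] + p_{\tb y_4}[r_4] - 1)/2$ is a nonnegative integer.

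The main obstacle I anticipate is the careful bookkeeping of the corner contributions when $r_4$ is a genuinely non-embedded-after-projection rectangle (as in Figure~\ref{fig:4fold}), where the four rectangles $r_4, N(r_4), E(r_4), NE(r_4)$ may share edges or corners with one another, so that a single point of $\tb x_4$ could a priori see contributions from multiple rectangles in the equivalence class. I would handle this by noting that the hypothesis $\tb x_4 \neq \tb y_4$ rules out the all-corners-equivalent (square) case already dispatched before the lemma, and that in all remaining cases the four translates have pairwise disjoint \emph{interiors} and meet only along boundary strata, so the averaging is still computed square-by-square and the parity count goes through. The cleanest packaging is probably to verify the identity $p_{\tb x_4}[r_4] + p_{\tb y_4}[r_4] = 2\#(\Int(r_4)\cap \pi_4^{-1}(\tb x)) + 1$ directly, mirroring the classical computation $m(r) = \#(\Int(r)\cap\tb x)$ but carried out upstairs in $\bb G_4$, and then invoke nonnegativity of the right-hand side.
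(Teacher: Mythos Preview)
Your high-level plan is right --- show the four corners of $r_4$ contribute $1$ to $p_{\tb x_4}[r_4] + p_{\tb y_4}[r_4]$ and everything else contributes an even integer --- and this matches the paper's approach. But the ``key combinatorial step'' you sketch is based on a false premise, and without it the parity argument collapses.

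You assert that ``the grid-state points in the interior of $\partial_\alpha r_4$ are the same whether counted via $\tb x_4$ or via $\tb y_4$,'' and then infer that each edge-interior point contributes $1$ (a half from each state). In fact the opposite is true: any point of $\tb x_4 \cap \tb y_4$ lies on a horizontal and a vertical circle on which $r_4$ has no corners, so it cannot lie on $\partial r_4$ at all. The only grid-state points that can sit in the interior of an edge of $r_4$ are points of $\tb x_4 \mathbin{\triangle} \tb y_4$, i.e.\ points equivalent to a corner of $r_4$. Each such point lies in exactly one of the two states and therefore contributes $1/2$ to the sum, not $1$. So your bookkeeping does not show that the non-corner contribution is even; it actually produces half-integers you have not accounted for.

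What the paper does to close this gap --- and what your sketch is missing --- is a case analysis of the twelve points equivalent to (but not equal to) the four corners of $r_4$. If one of them lies in $\Int(r_4)$, then a whole $n\times n$ square is forced inside $r_4$, and one checks that the three non-corner points in that equivalence class contribute $1 + \tfrac12 + \tfrac12 = 2$; moreover this forces the same pattern for every corner. If none lies in $\Int(r_4)$ but some lie on edges, one argues that they occur in groups of four along a pair of opposite edges, again contributing $4\cdot\tfrac12 = 2$. Your proposal waves at the ``non-embedded-after-projection'' case but does not carry out this grouping. Also, your aside that ``the four translates have pairwise disjoint interiors'' is false precisely when $r_4$ has a side of length $>n$, which is exactly the case at issue; fortunately this claim is irrelevant since $p_{\tb x_4}[r_4]$ is computed from $r_4$ alone, not from the sum of its translates. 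Finally, the identity $p_{\tb x_4}[r_4] + p_{\tb y_4}[r_4] = 2\#(\Int(r_4)\cap \tb x_4) + 1$ you propose at the end is appealing but you have not proved it, and establishing it requires essentially the same case analysis.
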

\begin{proof}
No two corners of $r_4$ are equivalent. It suffices to show that $p_{\tb x_4}[r_4] + p_{\tb y_4}[r_4]$ is a positive odd integer. The four corner points of $r_4$ contribute $1$. Any point common to both $\tb x_4$ and $\tb y_4$ cannot lie on the boundary of $r_4$. Hence such a point either contributes $0$ or $2$ to $p_{\tb x_4}[r_4] + p_{\tb y_4}[r_4]$. Every remaining point is equivalent to exactly one corner point of $r_4$. 

If a point $p$ is equivalent to a corner $c$ and lies in the interior of $r_4$, then an entire $n\times n$ square with vertices $\{p,c,N(p),N(c)\}$ is contained in $r_4$. The points $N(p)$ and $N(c)$ must lie on the boundary of $r_4$ so the three points $p, N(p), N(c)$ together contribute $2$. For every other corner $c'$ of $r_4$, the interior of $r_4$ must contain a point equivalent to $c'$. Hence the three other points equivalent to $c'$ contribute $2$. All points are then accounted for so $p_{\tb x_4}[r_4] + p_{\tb y_4}[r_4]$ is odd. 

Assume none of the points equivalent to a corner point of $r_4$ lie in the interior of $r_4$. Suppose a non-corner point $p$ is equivalent to a corner $c$ and lies on the boundary of $r_4$. If the edge $E$ that contains $p$ does not contain $c$, then two corners of $r_4$ are equivalent. Hence we may assume that $E$ contains both $p$ and $c$. Furthermore, the other two points equivalent to $p$ and $c$ lie outside of $r_4$. As $E$ contains both $p$ and $c$, we see that $E$ is at least $n$ rows or columns long so it contains a point equivalent to the other endpoint of $E$. Similarly, the edge opposite to $E$ contains two distinct points in its interior equivalent to its two endpoints. These four points contribute $2$ to $p_{\tb x_4}[r_4] + p_{\tb y_4}[r_4]$. Thus $p_{\tb x_4}[r_4] + p_{\tb y_4}[r_4]$ is odd. 
\end{proof}

\subsection{4-fold toroidal grid homology}

Let $GC^\bullet(\bb G_4)$ be the free $\bb F[V_1,\ldots,V_n,v]$-module with basis $\tb S_4(\bb G_4)$. Define an $\bb F[V_1,\ldots,V_n,v]$-linear endomorphism $\partial_4^\bullet\colon GC^\bullet(\bb G_4) \to GC^\bullet(\bb G_4)$ by \[
    \partial_4^\bullet(\tb x_4) = \sum_{\tb y_4 \in \tb S_4(\bb G_4)} \:\sum_{\substack{[r_4] \in [\Rect](\tb x_4,\tb y_4)\\r_4\cap \pi_4^{-1}(\bb X) = \emptyset}} V_1^{O_1[r_4]}\cdots V_n^{O_n[r_4]}v^{m[r_4]}\cdot\tb y_4. 
\]Any equivalence class of rectangles from $\tb x_4$ to $\tb x_4$ must contain an $X$-marking so $m[r_4]$ is always a nonnegative integer in the above expression. The Maslov and Alexander gradings on $\tb S_4(\bb G_4)$ are defined by $M(\tb x_4) = M(\tb x)$ and $A(\tb x_4) = A(\tb x)$. These gradings are extended to elements of the form $V_1^{k_1}\cdots V_n^{k_n}v^m\cdot\tb x_4$ by the formulas \begin{align}
    M(V_1^{k_1}\cdots V_n^{k_n}v^m\cdot\tb x_4) &= M(\tb x_4) - 2k_1 - \cdots - 2k_n + 2m\label{formula:maslov grading extension}\\
    A(V_1^{k_1}\cdots V_n^{k_n}v^m\cdot \tb x_4) &= A(\tb x_4) - k_1 - \cdots - k_n.\label{formula:alexander grading extension}
\end{align}These same formulas are used to extend the gradings to elements of the form $V_1^{k_1}\cdots V_n^{k_n}v^m \cdot \tb x$ in the chain complex $GC^\bullet(\bb G)$. 
We will verify that $\partial_4^\bullet$ is a differential, homogeneous of degree $(-1,0)$. Let $J_4\colon GC^\bullet(\bb G) \to GC^\bullet(\bb G_4)$ be the bigraded $\bb F[V_1,\ldots,V_n,v]$-module isomorphism induced by the bijection $\tb x \mapsto \tb x_4$ on grid states. We will show that $\partial_4^\bullet\circ J_4 = J_4 \circ \partial^\bullet$ (Proposition~\ref{prop:twoComplexesIso}) from which it will follow that $\partial^\bullet$ is a differential and that $(GC^\bullet(\bb G),\partial^\bullet)$ and $(GC^\bullet(\bb G_4),\partial_4^\bullet)$ are isomorphic as bigraded chain complexes over $\bb F[V_1,\ldots,V_n,v]$. 

\begin{lem}\label{lem:grading formulas}
If $[r_4] \in [\Rect](\mathbf x_4,\mathbf y_4)$ then \begin{align}
    M(\mathbf x_4) - M(\mathbf y_4) &= 1 - 2 \ts\sum_i O_i[r_4] + 2m[r_4]\label{eq:maslov}\\
    A(\mathbf x_4) - A(\mathbf y_4) &= \ts\sum_i X_i[r_4] - \ts\sum_i O_i[r_4].\label{eq:alexander}
\end{align}
\end{lem}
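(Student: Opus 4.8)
The plan is to use the covering $\pi_4$ to replace the rectangle class $[r_4]$ by an ordinary domain on $\bb G$, and then to reduce both identities to the classical grading formulas for rectangles in grid homology. Given $[r_4]\in[\Rect](\mathbf x_4,\mathbf y_4)$, let $\psi$ be the domain on $\bb G$ whose multiplicity at each square $S$ is $\#\bigl(\pi_4^{-1}(S)\cap r_4\bigr)$, so that $\pi_4^{*}\psi = r_4+N(r_4)+E(r_4)+NE(r_4)$. Since $\pi_4^{*}$ commutes with $\partial_\alpha$ and $\partial_\beta$ and is injective on chains, and since $\mathbf x_4,\mathbf y_4$ are the $\pi_4$-preimages of $\mathbf x,\mathbf y$, the defining equations of a rectangle from $\mathbf x_4$ to $\mathbf y_4$ are exactly the $\pi_4^{*}$-images of the equations saying that $\psi$ is a domain from $\mathbf x$ to $\mathbf y$; hence $\psi$ is such a domain. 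Grouping the $4n$ points of $\mathbf x_4$ into the $n$ fibers of $\pi_4$ and using $N^2=E^2=\mathrm{Id}$ and $NE=EN$ to identify, at each fiber, the local multiplicities of $r_4$ with those of $\pi_4^{*}\psi$ at a single point, one obtains $O_i(\psi)=O_i[r_4]$, $X_i(\psi)=X_i[r_4]$, $p_{\mathbf x}(\psi)=p_{\mathbf x_4}[r_4]$ and $p_{\mathbf y}(\psi)=p_{\mathbf y_4}[r_4]$, where $p_{\mathbf x}(\psi)$ is the point measure of $\psi$ against $\mathbf x$ on $\bb G$, defined as in the case of $\bb G_4$ but summed over the $n$ points of $\mathbf x$. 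In particular $m[r_4]=\tfrac12\bigl(p_{\mathbf x}(\psi)+p_{\mathbf y}(\psi)-1\bigr)$, and since $M(\mathbf x_4)=M(\mathbf x)$, $A(\mathbf x_4)=A(\mathbf x)$ by definition, the two asserted formulas are equivalent to the purely $\bb G$-statements
\[
M(\mathbf x)-M(\mathbf y)=p_{\mathbf x}(\psi)+p_{\mathbf y}(\psi)-2\ts\sum_i O_i(\psi),\qquad
A(\mathbf x)-A(\mathbf y)=\ts\sum_i X_i(\psi)-\ts\sum_i O_i(\psi).
\]

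The key observation is that the right-hand sides of these two equations depend only on the class of $\psi$ modulo periodic domains on $\bb G$. The periodic domains form the $\Z$-span of the $n$ rows $R_1,\dots,R_n$ and $n$ columns $C_1,\dots,C_n$ of the grid (the horizontal and vertical annuli bounded by consecutive $\alpha$-circles, resp.\ consecutive $\beta$-circles), and each $R_i$ and each $C_j$ contains exactly one $O$-marking and exactly one $X$-marking and satisfies $p_{\mathbf x}(R_i)=p_{\mathbf y}(R_i)=1$ for every pair of grid states; hence adding a periodic domain to $\psi$ changes neither right-hand side. As the left-hand sides depend only on $\mathbf x,\mathbf y$, it now suffices to verify the two equations for one convenient domain from $\mathbf x$ to $\mathbf y$. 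If $\mathbf x=\mathbf y$, the zero domain works and both sides vanish. If $\mathbf x\neq\mathbf y$, then $\mathbf x_4\neq\mathbf y_4$, so no two corners of $r_4$ are equivalent (as noted just before Lemma~\ref{lem:computations of m}); therefore $\pi_4$ sends the four corners of $r_4$ to four distinct points of $\bb G$ lying on two distinct $\alpha$-circles and two distinct $\beta$-circles, and the defining equations for $[r_4]$ then force $\mathbf y$ to be obtained from $\mathbf x$ by transposing two of its points. Thus $\Rect(\mathbf x,\mathbf y)\neq\emptyset$; taking any $r\in\Rect(\mathbf x,\mathbf y)$ as the convenient domain and using $p_{\mathbf x}(r)+p_{\mathbf y}(r)=1+2\,\#(\Int(r)\cap\mathbf x)$, the two equations reduce to the classical Maslov and Alexander grading formulas for a rectangle, which we may cite from \cite[Chapter 4]{OSS15}.

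I expect the principal obstacle to be the bookkeeping of the first step: one must carefully match the point measure $p_{\mathbf x_4}$ on $\bb G_4$ (an average over four neighbouring regions, summed over all $4n$ points of $\mathbf x_4$) with the ordinary point measure on $\bb G$ through the four-to-one correspondence of fibers, and likewise check that the $O$- and $X$-multiplicities of $[r_4]$ agree with those of the pushforward $\psi$; this is exactly where the relations $N^2=E^2=\mathrm{Id}$ and $NE=EN$ are used. The only other nontrivial input is the remark that a rectangle class in $\bb G_4$ between two distinct grid states projects to a transposition on $\bb G$. Everything past these two points is the periodic-domain computation together with the classical rectangle formula, and involves no new ideas.
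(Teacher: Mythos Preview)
Your argument is correct and takes a genuinely different route from the paper's proof. The paper proceeds by a direct case analysis on the size of the rectangle $r_4$: it distinguishes the cases where the height and width are each less than $n$, between $n$ and $2n$, or (in the degenerate case $\mathbf x_4=\mathbf y_4$) exactly $n$, and in each case explicitly tallies the contributions of corner, boundary, and interior points to $p_{\mathbf x_4}[r_4]+p_{\mathbf y_4}[r_4]$ and to the $O$- and $X$-counts, reducing to the small-rectangle case by peeling off $n\times \ell$ or $k\times n$ strips.

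Your approach instead pushes the entire problem down to $\bb G$: you observe that the symmetrized domain $r_4+N(r_4)+E(r_4)+NE(r_4)$ is $\pi_4^*\psi$ for a domain $\psi$ from $\mathbf x$ to $\mathbf y$, check that all the quantities appearing in the lemma agree with the corresponding quantities for $\psi$, and then reduce the resulting identities on $\bb G$ to the classical rectangle formulas via invariance under periodic domains. This is more conceptual and avoids the point-by-point bookkeeping; in effect you prove the stronger fact that the Lipshitz-type index formula $M(\mathbf x)-M(\mathbf y)=p_{\mathbf x}(\psi)+p_{\mathbf y}(\psi)-2\sum_i O_i(\psi)$ holds for \emph{any} domain $\psi$ from $\mathbf x$ to $\mathbf y$, not just for pushforwards of rectangles. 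The paper's explicit computation, on the other hand, has the virtue that its intermediate steps (the exact value of $m[r_4]$ in each size regime) are reused later in the proof of Lemma~\ref{lem:point measure is additive}, so some of that casework would still be needed downstream.
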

\begin{proof}
If $\tb x_4 = \tb y_4$, then $r_4$ is a square consisting of $n$ columns and $n$ rows. It follows that $O_i[r_4] = 1 = X_i[r_4]$ for $i = 1,\ldots,n$ and $2m[r_4] = 2p_{\tb x_4}(r_4) - 1 = 2n - 1$ so the formulas are valid. It suffices to assume that none of the four corner points are equivalent. 

Consider the case when $r_4$ has $k$ rows and $\ell$ columns where $0 < k,\ell < n$. Then the equivalence class of $r_4$ lies in the image of $\Rect(\tb x,\tb y)$ in $[\Rect](\tb x_4,\tb y_4)$. If $r \in \Rect(\tb x,\tb y)$ is the corresponding rectangle, then since $O_i[r_4] = O_i(r)$, $X_i[r_4] = X_i(r)$, and $m[r_4] = m(r)$ we find that Equations~\ref{eq:maslov} and \ref{eq:alexander} are valid due to Equations (4.2) and (4.4) in \cite{OSS15}. 

Now assume that $r_4$ has $n+k$ rows and $\ell$ columns where $0 < k,\ell < n$. Let $r$ denote the first $k$ rows of $r_4$ so that $r$ is a $k \times \ell$ rectangle from $\tb x_4$ to $\tb y_4$. Since the lengths of the edges of $r$ are less than $n$, we see that $M(\tb x_4) - M(\tb y_4) = 1 - 2\sum_i O_i(r) + 2m(r)$ and $A(\tb x_4) - A(\tb y_4) = \sum_i X_i(r) - \sum_i O_i(r)$. Let $C$ denote the last $n$ rows of $r_4$. Then in each of the $\ell$ columns of $C$, there is an $X$-marking and an $O$-marking because $C$ spans $n$ rows. It follows that $\sum_i O_i[r_4] = \ell + \sum_i O_i(r)$ and $\sum_i X_i[r_4] = \ell + \sum_iX_i(r)$. Furthermore, the interior of $C$ must contain $\ell - 1$ points of $\tb x_4 \cap \tb y_4$. Using the computations of Lemma~\ref{lem:computations of m}, it follows that \[
    m[r_4] = \frac{p_{\tb x_4}(r_4) + p_{\tb y_4}(r_4) - 1}{2} = \frac{2m(r) + 2(\ell - 1) + 3 - 1}{2} = m(r) + \ell. 
\]The validity of Formulas~\ref{eq:maslov} and \ref{eq:alexander} follow. The case when $r_4$ has $k$ rows and $n + \ell$ columns where $0 < k,\ell < n$ is similarly verified. 

Finally, assume that $r_4$ has $n + k$ rows and $n + \ell$ columns where $0 < k,\ell < n$. Let $r$ be the intersection of the first $k$ rows of $r_4$ with the first $\ell$ columns of $r_4$. Then $M(\tb x_4) - M(\tb y_4) = 1 - 2\sum_i O_i(r) + 2m(r)$ and $A(\tb x_4) - A(\tb y_4) = \sum_i X_i(r) - \sum_i O_i(r)$. Let $S$ be the intersection of last $n$ rows with the last $n$ columns, let $R$ be the intersection of the first $k$ rows with the last $n$ columns, and $T$ the intersections of the last $n$ rows with the first $\ell$ columns. Observing that $O_i[S] = 1 = X_i[S]$ for $i = 1,\ldots,n$, and using the previous case, we find that $\sum_i O_i[r_4] = n + \ell + k + \sum_i O_i(r)$ and $\sum_i X_i[r_4] = n + \ell + k + \sum_i X_i(r)$. There are four corner points of $r_4$, eight points on the boundary, $n - 1$ in the interior of $S$, $m(r)$ in the interior of $R$, $k - 1$ in the interior of $R$, $\ell - 1$ in the interior of $T$, and three remaining points lying on $(T \cap S) \cup (R \cap S)$ in the interior of $r_4$. Of the interior points, exactly four lie in only one of $\tb x_4, \tb y_4$ while all others lie in both. The points on the boundary lie in exactly one. Thus \[
    m[r_4] = \frac{1 + 4 + 2(n-1) + 2m(r) + 2(k - 1) + 2(\ell - 1) + 6 - 4 - 1}{2} = m(r) + n + k + \ell
\]so Formulas~\ref{eq:maslov} and \ref{eq:alexander} are valid in all cases. 
\end{proof}

Composition of rectangles in $\bb G_4$ is more complicated than composition of rectangles in $\bb G$. Given rectangles $r_4,r_4'$ such that $[r_4] \in [\Rect](\tb x_4,\tb y_4), [r_4'] \in [\Rect](\tb y_4,\tb z_4)$, we may form the composite domain $r_4\ast r_4'$ whose equivalence class $[r_4\ast r_4']$ lies in $[\pi](\tb x_4,\tb z_4)$. Different representatives of $[r_4]$ and $[r_4']$ potentially determine a different composite equivalence class of domains from $\tb x_4$ to $\tb z_4$. In particular, the classes $[r_4]$ and $[r_4']$ determine exactly four composite equivalence classes $[r_4\ast r_4'], [r_4\ast N(r_4')], [r_4\ast E(r_4')], [r_4\ast NE(r_4')] \in [\pi](\tb x_4,\tb z_4)$. A different choice of representative for $[r_4]$ amounts to different choices of representatives for these four classes in $[\pi](\tb x_4,\tb z_4)$. 

If $[\psi_4]$ is one of the four composite equivalence classes determined by $[r_4]$ and $[r_4']$, then it is easy to verify that $O_i[\psi_4] = O_i[r_4] + O_i[r_4']$ and $X_i[\psi_4] = X_i[r_4] + X_i[r_4']$. Suppose $D_i$ and $D_j$ are two of the $(2n)^2$ squares determined by $\bo{\alpha}^4$ and $\bo{\beta}^4$, and suppose $D_i$ and $D_j$ are equivalent rectangles. Then for any grid state $\tb w_4$, we have that $p_{\tb w_4}(D_i) = p_{\tb w_4}(D_j)$. If $\psi_4$ and $\phi_4$ both represent composite equivalence classes determined by $[r_4]$ and $[r_4']$ then we may write their difference as a sum of domains of the form $D_i - D_j$ where $D_i$ and $D_j$ are equivalent. Hence $p_{\tb w_4}[\psi_4] = p_{\tb w_4}[\phi_4]$ for any grid state $\tb w_4$. Let $m[\psi_4] = (p_{\tb x_4}[\psi_4] + p_{\tb z_4}[\psi_4] - 2)/2$. The following lemma (Lemma~\ref{lem:point measure is additive}) shows that $m[\psi_4] = m[r_4] + m[r_4']$. In particular, $m[\psi_4] = m[\phi_4]$ whenever $\psi_4$ and $\phi_4$ represent composite equivalence classes of $[r_4]$ and $[r_4']$. 

\begin{lem}\label{lem:point measure is additive}
If $r_4$ and $r_4'$ are rectangles for which $[r_4] \in [\Rect](\mathbf x_4,\mathbf y_4)$ and $[r_4'] \in [\Rect](\mathbf y_4,\mathbf z_4)$ then \[
    p_{\mathbf x_4}[r_4\ast r_4'] + p_{\tb z_4}[r_4\ast r_4'] = (p_{\mathbf x_4}[r_4] + p_{\mathbf y_4}[r_4]) + (p_{\mathbf y_4}[r_4'] + p_{\mathbf z_4}[r_4']).
\]
\end{lem}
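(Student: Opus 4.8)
The plan is to use that the point-measure is additive under formal addition of domains in order to reduce the statement to a single ``reciprocity'' identity between $r_4$ and $r_4'$, and then to verify that identity by a local computation that descends to the ordinary grid diagram $\bb G$.

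First, the point-measure $p_{\tb w_4}(\cdot)$ is $\Z$-linear in the domain: it is a sum over the $4n$ points of $\tb w_4$ of averages of multiplicities, and multiplicity is linear in the domain. Since $r_4\ast r_4' = r_4 + r_4'$ as formal sums of squares — and since, as noted just before the statement, $p_{\tb w_4}$ is well defined on the composite equivalence class — we get $p_{\tb x_4}[r_4\ast r_4'] = p_{\tb x_4}[r_4] + p_{\tb x_4}[r_4']$ and $p_{\tb z_4}[r_4\ast r_4'] = p_{\tb z_4}[r_4] + p_{\tb z_4}[r_4']$. Substituting into the claimed equality and cancelling the terms $p_{\tb x_4}[r_4]$ and $p_{\tb z_4}[r_4']$ occurring on both sides, the lemma becomes equivalent to
\begin{equation*}
    p_{\tb z_4}[r_4] - p_{\tb y_4}[r_4] = p_{\tb y_4}[r_4'] - p_{\tb x_4}[r_4']. \tag{$\dagger$}
\end{equation*}
Conceptually $(\dagger)$ says that the change in the point-measure of $r_4$ caused by replacing $\tb y_4$ with $\tb z_4$, a replacement effected by the rectangle $r_4'$, equals the change in the point-measure of $r_4'$ caused by replacing $\tb x_4$ with $\tb y_4$ via the rectangle $r_4$; the two rectangles ``interact reciprocally''. (Equivalently, this could be phrased as additivity under juxtaposition of a Maslov-type index minus an Euler measure, but the direct combinatorics below seems more elementary.)

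Second, I would prove $(\dagger)$ directly. Reading off the defining boundary conditions — and using Lemma~\ref{lem:computations of m} to know that when $\tb x_4 \neq \tb y_4$ no two corners of $r_4$ are equivalent, and similarly for $r_4'$ — one sees that $\tb x_4$ and $\tb y_4$ differ exactly at the points lying in the corner equivalence classes of $r_4$, and symmetrically $\tb y_4$ and $\tb z_4$ differ exactly at the corner equivalence classes of $r_4'$. Hence $p_{\tb z_4}[r_4] - p_{\tb y_4}[r_4]$ is a signed sum of $p_c(r_4)$ over the corner points $c$ of $r_4'$, and therefore depends only on the multiplicities of $r_4$ in the squares meeting the corners of $r_4'$, and symmetrically for the right-hand side. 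One then checks $(\dagger)$ by cases, organized exactly as in the proof of Lemma~\ref{lem:grading formulas}: if $r_4$ and $r_4'$ each span fewer than $n$ rows and columns, they descend along $\pi_4$ to honest rectangles on $\bb G$ and $(\dagger)$ is the classical additivity of point-measures for the ordinary grid complex pulled back along $\pi_4$; otherwise, peel off the $n$-row and $n$-column strips from the long rectangle as in Lemma~\ref{lem:grading formulas} — each strip is an embedded square, and its interaction with the $N$- and $E$-invariant grid states contributes a controlled, easily computed term — and reduce once more to the short case.

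The main obstacle is the bookkeeping in this case analysis. The point of passing to $\bb G_4$ is that $r_4$ and $r_4'$ are embedded there, so a long rectangle still presents a familiar ordinary-grid local picture at each of its corners; but a long rectangle may wrap so that its corners are variously equivalent or inequivalent to those of the other rectangle, and one must track the local multiplicities near each of the points where the relevant grid states differ consistently across all such configurations. Once this is done, no genuinely new configuration appears beyond enlargements of the ones handled classically, so the pieced-together classical computation yields $(\dagger)$, and with it the lemma.
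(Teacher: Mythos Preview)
Your plan is correct and matches the paper's proof: reduce by linearity to the reciprocity identity $(\dagger)$, then strip off the long parts of each rectangle---the paper's key observation here is that the point-measure $p_{\tb w_4}[C]$ of each strip $C=r_4-r$ is independent of the grid state $\tb w_4$, which makes the reduction clean---and finally verify the short (descended) case. One small caveat: the paper does not cite the short case as a classical fact but works it out explicitly via a three-case analysis on $|\tb x\setminus(\tb x\cap\tb z)|$, so the bookkeeping you anticipate is indeed required there.
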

\begin{proof}
Note that the equality is equivalent to $p_{\tb x_4}[r_4'] + p_{\tb z_4}[r_4] = p_{\tb y_4}[r_4] + p_{\tb y_4}[r_4']$. Suppose $\tb x_4 = \tb y_4$ so that $r_4$ is a square with side length $n$. Then certainly $p_{\tb x_4}[r_4'] = p_{\tb y_4}[r_4']$. Furthermore, $p_{\tb z_4}[r_4] = p_{\tb y_4}[r_4]$ because $p_{\tb w_4}[r_4] = n$ for any grid state $\tb w_4$. The result is similarly verified when $\tb y_4 = \tb z_4$. We may therefore assume that $\tb x_4 \neq \tb y_4$ and $\tb y_4 \neq \tb z_4$. 

Let $r_4$ have $k$ rows and $\ell$ columns. Then by the proof of Lemma~\ref{lem:grading formulas}, there is a distinguished rectangle $r \subset r_4$ from $\tb x_4$ to $\tb y_4$ such that the lengths of the edges of $r$ are less than $n$. Using the identity $2m[r_4] = p_{\tb x_4}[r_4] + p_{\tb y_4}[r_4] - 1$ and the computations in the proof of Lemma~\ref{lem:grading formulas}, we find that \[
    p_{\tb x_4}[r_4] + p_{\tb y_4}[r_4] = \begin{cases}
        p_{\tb x_4}(r) + p_{\tb y_4}(r) & k,\ell < n\\
        p_{\tb x_4}(r) + p_{\tb y_4}(r) + 2\ell & \ell < n < k\\
        p_{\tb x_4}(r) + p_{\tb y_4}(r) + 2k & k < n < \ell\\
        p_{\tb x_4}(r) + p_{\tb y_4}(r) + 2(k + \ell - n) & n < k,\ell.
    \end{cases}
\]Let $C$ be the difference $r_4 - r$. If $C$ is a rectangle with $n$ rows and $\ell$ columns with $\ell < n$, then $p_{\tb w_4}[C] = \ell$ for any grid state $\tb w_4$. Similarly, if $C$ is a rectangle with $k$ rows and $n$ columns with $k < n$, then $p_{\tb w_4}[C] = k$. If both $\ell$ and $k$ are greater than $n$, then $C$ is an $L$-shaped region and $p_{\tb w_4}[C] = k + \ell - n$. 

Let $r_4'$ have $k'$ rows and $\ell'$ columns, with distinguished rectangle $r' \subset r_4'$ from $\tb y_4$ to $\tb z_4$ with edges of length less than $n$. Let $C'$ be the difference $r_4' - r'$. It follows that $p_{\tb x_4}[r_4\ast r_4'] + p_{\tb z_4}[r_4\ast r_4']$ is equal to \[
    \Big[p_{\tb x_4}(r) + p_{\tb z_4}(r) + p_{\tb x_4}(r') + p_{\tb z_4}(r')\Big] + p_{\tb x_4}[C] + p_{\tb x_4}[C'] + p_{\tb z_4}[C] + p_{\tb x_4}[C']
\]while $(p_{\tb x_4}[r_4] + p_{\tb y_4}[r_4]) + (p_{\tb y_4}[r_4'] + p_{\tb z_4}[r_4'])$ is equal to \[
    \Big[p_{\tb x_4}(r) + p_{\tb y_4}(r) + p_{\tb y_4}(r') + p_{\tb z_4}(r') \Big] + p_{\tb x_4}[C] + p_{\tb y_4}[C] + p_{\tb y_4}[C'] + p_{\tb z_4}[C'].
\]Since the point-measures of $C$ and $C'$ are independent of the grid state, it suffices to prove the equality of the expressions in the square brackets. We have reduced the problem to the case where $[r_4]$ and $[r_4']$ are lifts of rectangles in $\bb G$. The point-measure of a rectangle in $\bb G$ with respect to a grid state $\tb w\in\tb S(\bb G)$ is defined in the same way, and it is clear that $p_{\tb w}(\pi_4(r_4)) = p_{\tb w_4}(r_4)$. 

Let $r$ be a rectangle in $\bb G$ from $\tb x$ to $\tb y$, and let $r'$ be a rectangle from $\tb y$ to $\tb z$. We must verify that $p_{\tb x}(r) + p_{\tb x}(r') + p_{\tb z}(r) + p_{\tb z}(r') = p_{\tb x}(r) + p_{\tb y}(r) + p_{\tb y}(r') + p_{\tb z}(r')$. Since $p_{\tb y}(r) = p_{\tb x}(r)$ and $p_{\tb y}(r') = p_{\tb z}(r')$, it suffices to show that $p_{\tb x}(r) - p_{\tb x}(r') = p_{\tb z}(r) - p_{\tb z}(r')$. Any point in the intersection $\tb x \cap \tb z$ contributes the same value to $p_{\tb x}(r)$ and $p_{\tb z}(r)$ and the same value to $p_{\tb x}(r')$ and $p_{\tb z}(r')$. Let $q_{\tb x}(r)$ denote the sum of the contributions of the points in $\tb x \setminus(\tb x \cap \tb z)$ to $p_{\tb x}(r)$, and similarly let $q_{\tb z}(r)$ denote the sum of the contributions of $\tb z\setminus (\tb x \cap \tb z)$ in $p_{\tb z}(r)$. It suffices to show that $q_{\tb x}(r) - q_{\tb x}(r') = q_{\tb z}(r) - q_{\tb z}(r')$. There are three cases: 
\setlength\leftmargini{\dimexpr\leftmargini + 0.5em\relax}
\begin{itemize}
    \item[(M-1)] $\tb x \setminus (\tb x \cap \tb z)$ consists of four points. If $r$ and $r'$ are disjoint, then $q_{\tb x}(r) = q_{\tb x}(r') = 1/2 = q_{\tb z}(r) = q_{\tb z}(r')$. 

    Assume that exactly one corner point $c$ of $r'$ lies in the interior of $r$. If $c$ lies in $\tb x$, then there is exactly one corner point of $r$ lying in the interior of $r'$ and this corner point also lies in $\tb x$. It follows that $q_{\tb x}(r) = 3/2 = q_{\tb x}(r')$ while $q_{\tb z}(r) = 1/2 = q_{\tb z}(r')$. If $c$ lies in $\tb z$ instead, then a similar argument shows that $q_{\tb x}(r) = 1/2 = q_{\tb x}(r')$ while $q_{\tb z}(r) = 3/2 = q_{\tb z}(r')$.

    Now assume that exactly two corner points of $r'$ lie in the interior of $r$. Then one lies in $\tb x$ while the other lies in $\tb z$. It follows that $q_{\tb x}(r) = 3/2 = q_{\tb z}(r)$ and that $q_{\tb x}(r') = 1/2 = q_{\tb z}(r')$. The case where exactly two corner points of $r$ lie in the interior of $r'$ is handled similarly. 

    The only remaining cases are when $r \subset r'$ or $r' \subset r$. In the first case, we find that $q_{\tb x}(r) - q_{\tb x}(r') = 2 = q_{\tb z}(r) - q_{\tb z}(r')$, while in the second, we find that $q_{\tb x}(r) - q_{\tb x}(r') = -2 = q_{\tb z}(r) - q_{\tb z}(r')$. 
    \item[(M-2)] $\tb x \setminus (\tb x \cap \tb z)$ consists of three points. 
    Assume first that all local multiplicities of $r\ast r'$ are either $0$ or $1$. Then $r\ast r'$ is an $L$-shaped region. If the unique $270^\circ$ corner point lies in $\tb x$, then $q_{\tb x}(r) - q_{\tb x}(r') = 1/2 - 3/4 = -1/4$ and $q_{\tb z}(r) - q_{\tb z}(r') = 1/4 - 1/2 = -1/4$. Otherwise, the unique $270^\circ$ corner point lies in $\tb z$, and similarly $q_{\tb x}(r) - q_{\tb x}(r') = 1/4 = q_{\tb z}(r) - q_{\tb z}(r')$.

    Now assume that not all local multiplicities of $r\ast r'$ are $0$ or $1$. Then $r'$ wraps around the torus and intersects $r$. The domain $r\ast r'$ is still the projection under $\pi_4$ of an $L$-shaped region in $\bb G_4$, and there is still a unique corner point $c$ of $r \ast r'$ for which three of the four local multiplicities of $r\ast r'$ by $c$ are $1$ and the last local multiplicity is $0$. If $c$ lies in $\tb x$, then $r'$ contains a corner point of $r$ lying in $\tb z$ in its interior. It follows that $q_{\tb x}(r) - q_{\tb x}(r') = 1/2 - 5/4 = -3/4$ and $q_{\tb z}(r) - q_{\tb z}(r') = 3/4 - 3/2 = -3/4$. When $c$ lies in $\tb z$, we have $q_{\tb x}(r) - q_{\tb x}(r') = 3/2 - 3/4 = 3/4$ and $q_{\tb z}(r) - q_{\tb z}(r') = 5/4 - 1/2 = 3/4$. 
    \item[(M-3)] $\tb x = \tb z$. It is vacuously true that $q_{\tb x}(r) - q_{\tb x}(r') = q_{\tb z}(r) - q_{\tb z}(r')$ in this case. 
\end{itemize}We have verified that $q_{\tb x}(r) - q_{\tb x}(r') = q_{\tb z}(r) - q_{\tb z}(r')$ in all cases so the desired result is proven. 
\end{proof}

\begin{lem}\label{lem:partialbullet4 is a differential}
The endomorphism $\partial_4^\bullet$ of $GC^\bullet(\bb G_4)$ is homogeneous of degree $(-1,0)$ and satisfies $\partial_4^\bullet\circ\partial_4^\bullet = 0$. 
\end{lem}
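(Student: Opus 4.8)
The plan is to verify homogeneity by a direct grading computation from Lemma~\ref{lem:grading formulas}, and then to prove $\partial_4^\bullet\circ\partial_4^\bullet=0$ by adapting to the $4$-fold cover the classical argument that the composite domain appearing in $\partial^2$ decomposes as a juxtaposition of two rectangles in an even number of ways. Since Proposition~\ref{prop:twoComplexesIso} is not yet available at this stage, the argument must be run directly in $\bb G_4$. For homogeneity, note that every term of $\partial_4^\bullet(\tb x_4)$ has the form $V_1^{O_1[r_4]}\cdots V_n^{O_n[r_4]}v^{m[r_4]}\cdot\tb y_4$ with $[r_4]\in[\Rect](\tb x_4,\tb y_4)$ and $r_4\cap\pi_4^{-1}(\bb X)=\emptyset$, so $X_i[r_4]=0$ for all $i$. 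Combining the extension formulas~\eqref{formula:maslov grading extension} and \eqref{formula:alexander grading extension} with Lemma~\ref{lem:grading formulas}, Equation~\eqref{eq:maslov} gives
\[
	M\big(V_1^{O_1[r_4]}\cdots V_n^{O_n[r_4]}v^{m[r_4]}\cdot\tb y_4\big)=M(\tb y_4)-2\ts\sum_iO_i[r_4]+2m[r_4]=M(\tb x_4)-1,
\]
and Equation~\eqref{eq:alexander}, using $X_i[r_4]=0$, gives the analogous identity $A(V_1^{O_1[r_4]}\cdots V_n^{O_n[r_4]}v^{m[r_4]}\cdot\tb y_4)=A(\tb x_4)$. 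Hence $\partial_4^\bullet$ is homogeneous of degree $(-1,0)$.

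For $\partial_4^\bullet\circ\partial_4^\bullet=0$, fix grid states $\tb x_4$ and $\tb z_4$. Using $O_i[r_4\ast r_4']=O_i[r_4]+O_i[r_4']$ and, for the $v$-power, $m[r_4\ast r_4']=m[r_4]+m[r_4']$ from Lemma~\ref{lem:point measure is additive}, the coefficient of $\tb z_4$ in $(\partial_4^\bullet\circ\partial_4^\bullet)(\tb x_4)$ equals
\[
	\sum_{\tb y_4}\ \sum_{[r_4],[r_4']}V_1^{O_1[r_4]+O_1[r_4']}\cdots V_n^{O_n[r_4]+O_n[r_4']}\cdot v^{m[r_4]+m[r_4']},
\]
the inner sum ranging over all $[r_4]\in[\Rect](\tb x_4,\tb y_4)$ and $[r_4']\in[\Rect](\tb y_4,\tb z_4)$ with $r_4\cap\pi_4^{-1}(\bb X)=\emptyset$ and $r_4'\cap\pi_4^{-1}(\bb X)=\emptyset$. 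Because $\bb F=\Z/2\Z$, it suffices to construct a fixed-point-free involution on the set of such composable, $X$-avoiding pairs $([r_4],[r_4'])$ from $\tb x_4$ to $\tb z_4$ that preserves the monomial $V_1^{O_1[r_4]+O_1[r_4']}\cdots v^{m[r_4]+m[r_4']}$.

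The involution is the ``cut the $270^\circ$ corner the other way'' move of the ordinary theory, performed inside $\bb G_4$. Given such a pair with $\tb x_4\neq\tb z_4$, exactly one of its four composite domain classes $[r_4\ast r_4'],[r_4\ast N(r_4')],[r_4\ast E(r_4')],[r_4\ast NE(r_4')]$ is represented by a connected domain $\psi_4$, and this class $[\psi_4]\in[\pi](\tb x_4,\tb z_4)$ depends only on $([r_4],[r_4'])$. As in the ordinary setting, the condition that $r_4$ and $r_4'$ avoid $\pi_4^{-1}(\bb X)$ forces $\psi_4$ to be an $L$-shaped region with a single $270^\circ$ corner and all local multiplicities in $\{0,1\}$; the crucial gain from passing to the $4$-fold cover is that such a region, which may overlap itself in $\bb G$ (Figure~\ref{fig:orectdecomp}), is genuinely embedded in $\bb G_4$, its only possible self-overlap being resolved by the double cover in each direction. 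An embedded $L$-shaped region has exactly two decompositions into two rectangles, both through its $270^\circ$ corner; one of them is $r_4\ast r_4'$ for a suitable choice of representatives, and we send $([r_4],[r_4'])$ to the pair $([s_4],[s_4'])$ coming from the other. The two sub-rectangles of an embedded, $X$-avoiding $L$-shaped region are again embedded and $X$-avoiding, so the image is of the required type; its connected composite is once more $[\psi_4]$, so the construction is an involution; and the two decompositions of an $L$-shaped region are distinct, so there are no fixed points. Finally, for each $i$ one has $O_i[r_4]+O_i[r_4']=O_i[\psi_4]=O_i[s_4]+O_i[s_4']$, and $m[r_4]+m[r_4']=m[\psi_4]=m[s_4]+m[s_4']$ by Lemma~\ref{lem:point measure is additive}, so the monomial is preserved.

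The remaining situations — $\tb x_4=\tb z_4$, composites that are disjoint unions of two rectangles or honest rectangles rather than proper $L$-shapes, and pairs admitting more than one connected composite — are treated exactly as in the proof that $\partial^-\circ\partial^-=0$: when $\tb x_4=\tb z_4$ the composites necessarily contain an $X$-marking of $\bb G_4$ and hence do not arise, disjoint composites cancel in pairs among themselves, and the rest are the familiar boundary cases of the juxtaposition argument. The step I expect to be the main obstacle is the structural claim isolated above: that a composable, $X$-avoiding pair of rectangle classes in $\bb G_4$ has a unique connected composite and that this composite is an embedded $L$-shaped region. This is precisely where one must use the combinatorics of the $4$-fold cover rather than those of $\bb G$, and its verification calls for a case analysis of the same flavour as those in Lemmas~\ref{lem:computations of m}, \ref{lem:grading formulas}, and \ref{lem:point measure is additive}.
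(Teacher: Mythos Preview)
Your overall strategy matches the paper's: homogeneity from Lemma~\ref{lem:grading formulas} plus the extension formulas, then a fixed-point-free involution on composable $X$-avoiding pairs, with Lemma~\ref{lem:point measure is additive} guaranteeing that the $v$-power is preserved. The central insight you isolate---that in $\bb G_4$ the shared-corner composite is a genuinely embedded $L$-shape admitting the alternate cut---is precisely the paper's point.

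The gap is in your case organization. Your structural claim, that for every pair with $\tb x_4\neq\tb z_4$ ``exactly one of the four composite classes is represented by a connected domain, and that domain is $L$-shaped,'' is false. When $\tb x\setminus(\tb x\cap\tb z)$ consists of four points the two rectangles share no corner class, and then the composite is \emph{never} an $L$-shape: depending on representatives it may be a disjoint union (no connected composite at all) or it may overlap without a $270^\circ$ corner (connected but not $L$-shaped). In this regime the involution is not the alternate cut but the \emph{swap}: $r_4'$ is already a rectangle from $\tb x_4$ to a new intermediate state $\tb y_4'$, and $r_4$ is a rectangle from $\tb y_4'$ to $\tb z_4$; Lemma~\ref{lem:point measure is additive} applied to $r_4'\ast r_4$ shows the monomial is unchanged. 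Your ``remaining situations'' paragraph gestures at this via ``disjoint composites cancel in pairs,'' but that misses the overlapping-yet-corner-disjoint case and is inconsistent with your earlier universal connectedness claim.

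The paper avoids this by organizing the trichotomy on $|\tb x\setminus(\tb x\cap\tb z)|\in\{4,3,0\}$. Only in the three-point case does the $L$-shape cut apply, and there the paper selects the correct composite directly---choose the representative of $[r_4']$ whose corner at the shared class $s$ coincides with that of $r_4$---rather than searching the four composite classes for a connected one. Embeddedness of the resulting $L$ is then argued by observing that a local multiplicity of $2$ would force $r_4\ast r_4'$ to contain an entire thin annulus of $\bb G_4$, hence at least two $X$-markings from some $\pi_4^{-1}(X_j)$, contradicting $X$-avoidance. Reorganizing your argument along this trichotomy closes the gap.
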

\begin{proof}
The described grading shift follows from Lemma~\ref{lem:grading formulas} and Formulas~\ref{formula:maslov grading extension} and \ref{formula:alexander grading extension}. Let $\tb x_4,\tb z_4$ be grid states in $\tb S_4(\bb G_4)$. The coefficient of $\tb z_4$ in the expression $(\partial_4^\bullet\circ\partial_4^\bullet)(\tb x_4)$ is the sum \[
    \sum V_1^{O_1[r_4]+O_1[r_4']}\cdots V_n^{O_n[r_4]+O_n[r_4']} v^{m[r_1]+m[r_2]}
\]taken over all pairs $[r_4] \in [\Rect](\tb x_4,\tb y_4)$, $[r_4'] \in[\Rect](\tb y_4,\tb z_4)$ where $\tb y_4\in\tb S_4(\bb G_4)$ and $X_i[r_4] = 0 = X_i[r_4']$ for $i = 1,\ldots,n$. Given a pair $[r_4],[r_4']$ satisfying the described condition, we will construct a different pair $[r_4''],[r_4''']$ also satisfying the condition that contributes the same coefficient to $\tb z_4$. Applying the same construction to the pair $[r_4''],[r_4''']$ will produce $[r_4],[r_4']$ so that the number of pairs contributing a given coefficient will be even. 

Let such a pair $[r_4],[r_4']$ be given. The grid states $\tb x$ and $\tb y$ share exactly $n-2$ points. If $\tb x$ were equal to $\tb y$, then $r_4$ would be a square of side length $n$ and therefore could not satisfy $X_i[r_4] = 0$ for $i = 1,\ldots,n$. Similarly, $\tb y$ and $\tb z$ share exactly $n-2$ points. There are three cases: 
\setlength\leftmargini{\dimexpr\leftmargini + 0.5em\relax}
\begin{itemize}
    \item[(D-1)] $\tb x\setminus (\tb x \cap \tb z)$ consists of four points. There is a uniquely determined grid state $\tb y_4'$ such that $r_4'$ determines an element in $[\Rect](\tb x_4,\tb y_4')$ and such that $r_4$ determines an element in $[\Rect](\tb y_4',\tb z_4)$. The composite $r_4' \ast r_4$ determines the same equivalence class of domains from $\tb x_4$ to $\tb z_4$ as the composite $r_4\ast r_4'$. By Lemma~\ref{lem:point measure is additive}, their contributions to the coefficient of $\tb z_4$ are identical. Different choices of representatives for $[r_4]$ and $[r_4']$ determine the same classes in $[\Rect](\tb x_4,\tb y_4')$ and $[\Rect](\tb y_4',\tb z_4)$. 
    \item[(D-2)] $\tb x\setminus (\tb x \cap \tb z)$ consists of three points. Let $s \in \bb G$ be the unique point in the intersection of $\tb y\setminus (\tb x \cap \tb y)$ and $\tb y \setminus (\tb y \cap \tb z)$, and let $s_4 \in \bb G_4$ be the unique preimage of $s$ under $\pi_4$ that is a corner of $r_4$. There is a uniquely determined rectangle equivalent to $r_4'$ that also has $s_4$ as a corner point. Without loss of generality, assume $r_4'$ is this rectangle. If a local multiplicity of the domain $r_4\ast r_4'$ is $2$, then $r_4\ast r_4'$ contains an entire thin annulus of $\bb G_4$, so for some $j$, we have $X_j[r_4] \ge 2$. Thus all local multiplicities of $r_4 \ast r_4'$ are $0$ or $1$ so $r_4 \ast r_4'$ is $L$-shaped. 
    Cutting along the unique $270^\circ$ angle determines a pair of rectangles $[r_4''] \in [\Rect](\tb x_4,\tb y_4')$, $[r_4'''] \in [\Rect](\tb y_4',\tb z_4)$ such that $r_4'' \ast r_4'''$ determines the same equivalence class of domains from $\tb x_4$ to $\tb z_4$ as $r_4 \ast r_4'$. A different choice of representative for $[r_4]$ only yields a different representative for $[r_4\ast r_4']$. 
    \item[(D-3)] $\tb x = \tb z$. Fix a representative $r_4$ for $[r_4]$ and consider its southeast corner $c$. Either $c$ is equivalent to the southwest corner of $r_4'$ or it is equivalent to the northeast corner of $r_4'$. In the first case, we may assume that $r_4'$ is the representative of $[r_4']$ that has $c$ as its southwest corner. The southeast corner of $r_4'$ is equivalent to the southwest corner of $r_4$ so $r_4\ast r_4'$ must contain an $X$-marking. In the second case, $r_4\ast r_4'$ also contains an $X$-marking by a similar argument. 
\end{itemize}Since we are working with $\bb F = \Z/2\Z$ coefficients, the coefficient of $\tb z_4$ in $(\partial_4^\bullet\circ\partial_4^\bullet)(\tb x_4)$ is zero in all cases. Thus $\partial_4^\bullet$ is a differential. 
\end{proof}

\begin{prop}\label{prop:twoComplexesIso}
The pair $(GC^\bullet(\bb G),\partial^\bullet)$ is a chain complex, and the $\bb F[V_1,\ldots,V_n,v]$-module isomorphism $J_4\colon GC^\bullet(\bb G) \to GC^\bullet(\bb G_4)$ induced by the bijection $\mathbf x \mapsto \mathbf x_4$ on grid states is an isomorphism of chain complexes.
\end{prop}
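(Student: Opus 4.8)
The plan is to prove the single identity $\partial_4^\bullet \circ J_4 = J_4 \circ \partial^\bullet$ of $\bb F[V_1,\ldots,V_n,v]$-module endomorphisms; everything else follows formally. Indeed, since $J_4$ is an isomorphism, this identity gives $\partial^\bullet = J_4^{-1}\circ\partial_4^\bullet\circ J_4$, whence $\partial^\bullet\circ\partial^\bullet = J_4^{-1}\circ(\partial_4^\bullet\circ\partial_4^\bullet)\circ J_4 = 0$ by Lemma~\ref{lem:partialbullet4 is a differential}, so that $(GC^\bullet(\bb G),\partial^\bullet)$ is a chain complex; and $J_4$, which is a bigraded module isomorphism by construction, intertwines the two differentials and is therefore an isomorphism of bigraded chain complexes. (The homogeneity of $\partial^\bullet$ of degree $(-1,0)$ likewise transfers from that of $\partial_4^\bullet$, though it is not needed for the statement.)

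Because $J_4$ is $\bb F[V_1,\ldots,V_n,v]$-linear and carries the basis element $\mathbf x$ to $\mathbf x_4$, the identity reduces to checking, for each pair of grid states $\mathbf x,\mathbf y\in\tb S(\bb G)$, that the coefficient of $\mathbf y$ in $\partial^\bullet(\mathbf x)$ equals the coefficient of $\mathbf y_4$ in $\partial_4^\bullet(\mathbf x_4)$. Recall the injective map $\Rect(\mathbf x,\mathbf y)\hookrightarrow[\Rect](\mathbf x_4,\mathbf y_4)$, $r\mapsto[r_4]$, characterized by $\pi_4(r_4)=r$; by definition $O_i[r_4]=O_i(r)$ and $X_i[r_4]=X_i(r)$, and by the discussion preceding Lemma~\ref{lem:computations of m} also $m[r_4]=m(r)$. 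Thus $r$ and $[r_4]$ contribute the same monomial $V_1^{O_1(r)}\cdots V_n^{O_n(r)}v^{m(r)}$ to their respective sums, and $r$ satisfies $r\cap\bb X=\emptyset$ if and only if $X_i[r_4]=0$ for all $i$, i.e.\ if and only if $r_4\cap\pi_4^{-1}(\bb X)=\emptyset$. Hence it remains only to show that $r\mapsto[r_4]$ restricts to a surjection, and therefore a bijection, from $\{\,r\in\Rect(\mathbf x,\mathbf y): r\cap\bb X=\emptyset\,\}$ onto $\{\,[r_4]\in[\Rect](\mathbf x_4,\mathbf y_4): r_4\cap\pi_4^{-1}(\bb X)=\emptyset\,\}$.

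The crux, and the only real content, is that an equivalence class $[r_4]$ with $r_4\cap\pi_4^{-1}(\bb X)=\emptyset$ lies in the image of $\Rect(\mathbf x,\mathbf y)$. As used in the proof of Lemma~\ref{lem:grading formulas}, $[r_4]$ lies in that image precisely when $r_4$ spans fewer than $n$ rows and fewer than $n$ columns, in which case $\pi_4$ restricts to an embedding on $r_4$ and $\pi_4(r_4)\in\Rect(\mathbf x,\mathbf y)$. Now observe that, since $\bb G_4$ is built from four copies of the $n\times n$ pattern $\bb P$, the two $X$-markings in any given column of $\bb G_4$ occur exactly $n$ rows apart, so any $n$ consecutive rows of $\bb G_4$ contain exactly one $X$-marking in each column; symmetrically with rows and columns interchanged. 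Consequently, if $r_4$ spans $n$ or more rows, then across each of its $\ell\ge 1$ columns it contains $n$ consecutive rows in full, hence contains at least one point of $\pi_4^{-1}(\bb X)$, and similarly if it spans $n$ or more columns. Contrapositively, $r_4\cap\pi_4^{-1}(\bb X)=\emptyset$ forces $r_4$ to span fewer than $n$ rows and columns, so $[r_4]=[(\pi_4(r_4))_4]$ with $\pi_4(r_4)\in\Rect(\mathbf x,\mathbf y)$ and $\pi_4(r_4)\cap\bb X=\emptyset$. Combined with injectivity of $r\mapsto[r_4]$, this yields the required bijection, the coefficients agree termwise, and the identity $\partial_4^\bullet\circ J_4=J_4\circ\partial^\bullet$ follows.

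I expect the main obstacle to be precisely this $X$-marking count, namely the clean justification that ``avoids every lifted $X$-marking'' implies ``narrow enough to descend to a rectangle in $\bb G$'', since the rest is bookkeeping with the already-established matching of weights. One should handle carefully the boundary case where $r_4$ spans exactly $n$ rows or exactly $n$ columns, and confirm that such an $r_4$ cannot sneak past because its markings sit on the boundary rather than the interior; but since rectangles are closed regions and $X_i[r_4]$ counts lifted markings lying anywhere in the closed rectangle $r_4$, this causes no trouble.
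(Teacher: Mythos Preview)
Your argument is correct and is essentially the paper's own proof: reduce to the intertwining identity $\partial_4^\bullet\circ J_4 = J_4\circ\partial^\bullet$, match weights along the injection $\Rect(\tb x,\tb y)\hookrightarrow[\Rect](\tb x_4,\tb y_4)$, and observe that any rectangle in $\bb G_4$ avoiding $\pi_4^{-1}(\bb X)$ must have edges shorter than $n$ and hence lies in the image. You have simply spelled out the formal consequences and the $X$-marking count in more detail than the paper does.
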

\begin{proof}
It suffices to show that $\partial_4^\bullet\circ J_4 = J_4 \circ \partial^\bullet$. If $r \cap \Rect(\tb x,\tb y)$ satisfies $r \cap \bb X = \emptyset$, then the corresponding equivalence class of rectangles $[r_4] \in \Rect(\tb x_4,\tb y_4)$ satisfies $r_4 \cap \pi_4^{-1}(\bb X) = \emptyset$ as well. Furthermore, $O_i[r_4] = O_i(r)$ and $m[r_4] = m(r)$ so it suffices to show that every equivalence class of rectangles $[r_4]$ which satisfies $r_4 \cap \pi_4^{-1}(\bb X) = \emptyset$ lies in the image of $\Rect(\tb x,\tb y)$ in $[\Rect](\tb x_4,\tb y_4)$. Observe that a rectangle $r_4$ determines an equivalence class in the image of $\Rect(\tb x,\tb y)$ if and only if the edges of $r_4$ are fewer than $n$ rows or columns long. Clearly any rectangle with an edge of length at least $n$ rows or columns must contain an $X$-marking in $\bb G_4$ so the result is proven. 
\end{proof}

Although $(GC^\bullet(\bb G),\partial^\bullet)$ has the benefit of being defined in terms of a usual toroidal grid diagram, we will find our $4$-fold toroidal grid diagram refomulation particularly helpful in nearly all subsequent proofs.

\begin{prop}\label{prop:V_i homotopic to V_j}
For any pair of integers $i,j \in \{1,\ldots,n\}$, multiplication by $V_i$ is chain homotopic to multiplication by $V_j$ when viewed as homogeneous endomorphisms of $GC^\bullet(\bb G_4)$ of degree $(-2,-1)$.
\end{prop}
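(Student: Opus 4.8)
The plan is to adapt the standard argument from ordinary grid homology (cf.\ Section~5.1 and Lemma~8.2.3 of \cite{OSS15}) that $V_i \simeq V_j$ whenever the $O_i$ and $X_k$ markings (for an appropriate $k$) occupy the same row or column, and then chain together such elementary homotopies. First I would recall the key geometric fact: for any two indices $i$ and $j$ there is a sequence $i = i_0, i_1, \ldots, i_m = j$ such that for each consecutive pair, there is an $X$-marking, say $X_{k}$, lying in the same row or column as both $O_{i_\ell}$ and $O_{i_{\ell+1}}$ (the $O$ and $X$ markings of a grid diagram form a single cycle). So it suffices to produce, for a fixed $X$-marking $X_k$ sharing a row (say) with $O_i$ and $O_j$, a chain homotopy between $V_i$ and $V_j$.

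The homotopy operator is the $\bb F[V_1,\ldots,V_n,v]$-linear map $H_{X_k}\colon GC^\bullet(\bb G_4) \to GC^\bullet(\bb G_4)$ defined by
\[
    H_{X_k}(\tb x_4) = \sum_{\tb y_4 \in \tb S_4(\bb G_4)}\;\sum_{\substack{[r_4] \in [\Rect](\tb x_4,\tb y_4)\\ X_k[r_4] = 1,\; X_l[r_4] = 0\text{ for }l\neq k}} V_1^{O_1[r_4]}\cdots V_n^{O_n[r_4]} v^{m[r_4]}\cdot \tb y_4,
\]
i.e.\ the same count as $\partial_4^\bullet$ but requiring each equivalence class of rectangles to contain exactly one marking of $\pi_4^{-1}(X_k)$ and no other $X$-markings. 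I would then compute $\partial_4^\bullet\circ H_{X_k} + H_{X_k}\circ\partial_4^\bullet$ by the familiar domain-splitting argument: the coefficient of $\tb z_4$ is a sum over equivalence classes of composite domains $[\psi_4] = [r_4 \ast r_4']$ with $X_k[\psi_4] = 1$ and $X_l[\psi_4] = 0$ for $l \neq k$, where one of the two constituent rectangles is counted by $\partial_4^\bullet$ and the other by $H_{X_k}$. By Lemma~\ref{lem:point measure is additive} the monomial coefficient $V_1^{O_1[\psi_4]}\cdots V_n^{O_n[\psi_4]} v^{m[\psi_4]}$ depends only on $[\psi_4]$. As in Lemma~\ref{lem:partialbullet4 is a differential}, one argues that the relevant domains are $L$-shaped (lifting to $\bb G_4$, a thin annulus would force some $X_l[\psi_4]\ge 2$ or $X_k[\psi_4]\ge 2$), so generically each such $[\psi_4]$ decomposes in exactly two ways and the contributions cancel mod $2$. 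The exceptions are precisely the domains that fail to split in two ways: these are the thin horizontal strips of width one in the row containing $O_i$, $O_j$, $X_k$ — more precisely, the domains $[\psi_4]$ that wrap once around the torus in the row of $X_k$ and contain exactly $O_i$ (contributing $V_i\cdot\tb z_4$ with $\tb z_4 = \tb x_4$) or exactly $O_j$ (contributing $V_j \cdot \tb x_4$). These leftover terms give $(V_i + V_j)\cdot\mathrm{Id}$, which over $\bb F$ is the desired identity $\partial_4^\bullet\circ H_{X_k} + H_{X_k}\circ\partial_4^\bullet = V_i + V_j$. Finally, composing the homotopies $H$ along the chain $i_0,\ldots,i_m$ and using that chain homotopy is transitive, we conclude $V_i \simeq V_j$; the grading shift $(-2,-1)$ of each $H_{X_k}$ follows from Lemma~\ref{lem:grading formulas} together with the extra factor $V$ absorbed (one checks $M$ drops by $2$ and $A$ by $1$ relative to $\partial_4^\bullet$'s shift because of the uncancelled $X_k$ versus $O_i$), matching the stated degree.

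The main obstacle, and the reason the $4$-fold cover is needed, is exactly the point flagged in the introduction: in $\bb G$ the composite domain $r\ast r'$ with $X_k(r\ast r') = 1$ need not be embedded — it can be a self-overlapping $L$-shape (Figure~\ref{fig:orectdecomp}) — and such a self-overlapping region admits only one decomposition into two rectangles in $\bb G$, breaking the pairing. Working in $\bb G_4$ resolves this because the relevant domains, which wrap at most once around the torus, lift to honest embedded $L$-shaped regions (spanning fewer than $2n$ rows and columns), so the classical cut-at-the-$270^\circ$-corner argument applies verbatim to equivalence classes. I expect the bulk of the work to be the careful case analysis verifying that (i) every composite $[\psi_4]$ arising in $\partial_4^\bullet H_{X_k} + H_{X_k}\partial_4^\bullet$ with the prescribed $X$-multiplicities is $L$-shaped as an equivalence class, (ii) the two decompositions genuinely give the same monomial via Lemma~\ref{lem:point measure is additive}, and (iii) correctly identifying the uncancelled terms as precisely $V_i + V_j$ — in particular checking that the width-one annular strips through $O_i$ and $O_j$ each contribute once and that no other boundary/degenerate configuration slips through. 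This parallels cases (D-1), (D-2), (D-3) of Lemma~\ref{lem:partialbullet4 is a differential}, with (D-3)-type domains ($\tb x_4 = \tb z_4$) now producing the surviving identity terms rather than being killed by an $X$-marking.
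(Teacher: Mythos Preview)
Your approach is essentially the paper's own: define the homotopy $\mathcal{H}_{X_k}^\bullet$ counting equivalence classes of rectangles with $X_k[r_4]=1$ and all other $X_l[r_4]=0$, then run the three-case analysis (R-1), (R-2), (R-3) exactly parallel to (D-1), (D-2), (D-3) of Lemma~\ref{lem:partialbullet4 is a differential}, and finally chain consecutive variables. Two concrete points need correcting.

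First, your geometric setup for the uncancelled terms is wrong. A single $X$-marking cannot share a row with two distinct $O$-markings, since each row contains exactly one $O$. The correct picture (and the paper's) is that $X_k$ shares a \emph{row} with $O_i$ and a \emph{column} with $O_j$. Accordingly, in case $\tb x_4 = \tb z_4$ the two surviving composite domains are not both horizontal: one is a height-$1$, length-$n$ horizontal strip through the row of $X_k$ (picking up $O_i$ and contributing $V_i$), and the other is a width-$1$, height-$n$ vertical strip through the column of $X_k$ (picking up $O_j$ and contributing $V_j$). Your sentence ``thin horizontal strips of width one in the row containing $O_i$, $O_j$, $X_k$'' describes an impossible configuration and would not produce $V_i+V_j$.

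Second, the homotopy operator $H_{X_k}$ is homogeneous of degree $(-1,-1)$, not $(-2,-1)$; it is the composite $\partial_4^\bullet H_{X_k} + H_{X_k}\partial_4^\bullet$ that has degree $(-2,-1)$, matching the degree of multiplication by $V_i$.
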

\begin{proof}
Let the variables $V_i$ and $V_j$ be \emph{consecutive}, which is to say that in the grid diagram $\bb G$, there is an $X$-marking $X_i$ in the intersection of the row containing $O_i$ and the column containing $O_j$. Define the $\bb F[V_1,\ldots,V_n,v]$-module endomorphism $\mathcal{H}_{X_i}^\bullet\colon GC^\bullet(\bb G_4) \to GC^\bullet(\bb G_4)$ by \[
    \mathcal{H}_{X_i}^\bullet(\tb x_4) = \sum_{\tb y_4\in\tb S_4(\bb G_4)}\:\sum_{\substack{[r_4] \in [\Rect](\tb x_4,\tb y_4)\\X_i[r_4] = 1\\X_j[r_4] = 0 \text{ for } j \neq i}} V_1^{O_1[r_4]}\cdots V_n^{O_n[r_4]}v^{m[r_4]}\cdot\tb y_4.
\]It follows from Lemma~\ref{lem:grading formulas} that $\mathcal{H}_{X_i}^\bullet$ is homogeneous of degree $(-1,-1)$. We show that $\partial_4^\bullet\circ \mathcal{H}_{X_i}^\bullet + \mathcal{H}_{X_i}^\bullet\circ\partial_4^\bullet = V_i + V_j$. The coefficient of the grid state $\tb z_4$ in $(\partial_4^\bullet\circ\mathcal{H}_{X_i}^\bullet + \mathcal{H}_{X_i}^\bullet\circ\partial_4^\bullet)(\tb x_4)$ is \[
    \sum V_1^{O_1[r_4\ast r_4']}\cdots V_n^{O_n[r_4\ast r_4']}v^{m[r_4\ast r_4']}
\]where the sum is taken over all pairs $[r_4]\in [\Rect](\tb x_4,\tb y_4)$, $[r_4'] \in [\Rect](\tb y_4,\tb z_4)$ where $\tb y_4$ is any grid state and $X_i[r_4] + X_i[r_4'] = 1$ and $X_j[r_4] + X_j[r_4'] = 0$ for $j\neq i$. 
We first show that this coefficient is zero when $\tb z_4 \neq \tb x_4$ by showing that pairs contributing the same coefficient cancel in pairs, just as in Lemma~\ref{lem:partialbullet4 is a differential}. 
When $\tb z_4 = \tb x_4$, we will see that there are exactly two pairs which together contribute $V_i + V_j$. 

Let $[r_4] \in [\Rect](\tb x_4,\tb y_4)$ and $[r_4'] \in [\Rect](\tb y_4,\tb z_4)$ satisfy $X_i[r_4] + X_i[r_4'] = 1$ and $X_j[r_4] + X_j[r_4'] = 0$ for $j\neq i$. There are three cases:
\setlength\leftmargini{\dimexpr\leftmargini + 0.5em\relax}
\begin{itemize}
    \item[(R-1)] $\tb x \setminus (\tb x \cap \tb z)$ consists of four points. The argument in Case (D-1) of Lemma~\ref{lem:partialbullet4 is a differential} handles this case. 
    \item[(R-2)] $\tb x \setminus (\tb x \cap \tb z)$ consists of three points. The argument in Case (D-2) of Lemma~\ref{lem:partialbullet4 is a differential} also handles this case. 
    \item[(R-3)] $\tb x = \tb z$. Fix a representative $r_4$ for $[r_4]$ and consider its southeast corner $c$. Either $c$ is equivalent to the southwest corner of $r_4'$ or it is equivalent to the northeast corner of $r_4'$. 

    In the case that $c$ is equivalent to the southwest corner of $r_4'$, assume without loss of generality that $r_4'$ is the representative of $[r_4']$ for which $c$ is equal to its southwest corner. The southeast corner of $r_4'$ is equivalent to the southwest corner of $r_4$. If they were equal, then $r_4\ast r_4'$ would contain an annulus of $\bb G_4$, which is impossible. Thus the horizontal edge of $r_4\ast r_4'$ has length $n$. Assume the vertical edge has length $k$. Then $\sum_{i=1}^n X_i[r_4\ast r_4'] = k$ so $k = 1$ and $O_i[r_4\ast r_4'] = 1$. This pair contributes a coefficient of $V_i$. 

    When $c$ is equivalent to the northeast corner of $r_4'$, we may again assume that the northeast corner of $r_4'$ is $c$. A similar argument shows that $r_4\ast r_4'$ is a rectangle whose vertical edge has length $n$ and whose horizontal edge has length $1$. Furthermore, $O_j[r_4\ast r_4'] = 1$ so this pair contributes $V_j$. 

    Since any pair of rectangles $[r_4] \in [\Rect(\tb x_4,\tb y_4)$, $[r_4'] \in [\Rect](\tb y_4,\tb x_4)$ satisfying $X_i[r_4\ast r_4'] = 1$ and $X_j[r_4\ast r_4'] = 0$ for $j \neq i$ must be one of these two pairs, there are no other contributions to the coefficient of $\tb x_4$ and in particular, nothing cancels the coefficient $V_i + V_j$. 
\end{itemize}
Altogether, we have that $\partial_4^\bullet\circ\mathcal{H}_{X_i}^\bullet + \mathcal{H}_{X_i}^\bullet\circ\partial_4^\bullet = V_i + V_j$ which shows that multiplication by $V_i$ is chain homotopic to multiplication by $V_j$. Since the grid diagram $\bb G$ represents a knot, there is a sequence of consecutive variables connecting any two variables $V_i$ and $V_j$. As chain homotopy is an equivalence relation, the result follows. 
\end{proof}

The \textit{double-point enhanced grid homology} $GH^\bullet(\bb G)$ is the homology of the bigraded chain complex $(GC^\bullet(\bb G),\partial^\bullet)$, viewed as a bigraded module over $\bb F[U,v]$ where the action of $U$ is induced by multiplication by $V_i$ for any $i \in \{1,\ldots,n\}$. The action of $U$ is independent of the choice of $i\in\{1,\ldots,n\}$ by Proposition~\ref{prop:V_i homotopic to V_j}.

\section{The invariance of double-point enhanced grid homology}\label{sec:proof of invariance}

The rest of this paper is dedicated to a proof of the following theorem. 

\begin{thm}\label{thm:knot invariance}
If $\bb G$ is a grid representing the knot $K$, then the isomorphism class of the bigraded $\bb F[U,v]$-module $GH^\bullet(\bb G)$ depends only on $K$. 
\end{thm}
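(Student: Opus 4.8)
The plan is to follow the classical Cromwell–Dynnikov strategy used to prove invariance of ordinary grid homology (as in Chapter 5 of \cite{OSS15} or the original argument in \cite{MOST07}), now carried out entirely in the $4$-fold toroidal setting where the relevant domains are embedded. Recall that any two grid diagrams representing the same knot $K$ are connected by a finite sequence of \emph{grid moves}: commutations (of two adjacent columns or two adjacent rows) and stabilizations/destabilizations (which change the grid number by one). So the theorem reduces to showing that the bigraded $\bb F[U,v]$-module $GH^\bullet(\bb G)$ is unchanged under each of these two types of moves. In each case I would build an explicit chain homotopy equivalence between the double-point enhanced complexes of the two diagrams, working in their $4$-fold covers $\bb G_4$ and using Proposition~\ref{prop:twoComplexesIso} to transport the conclusion back to $GC^\bullet(\bb G)$.

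First I would handle \textbf{commutation invariance}. As in the ordinary case, given grids $\bb G$ and $\bb G'$ related by a column commutation, one forms a combined diagram with a distinguished pentagon count and defines a map $P\colon GC^\bullet(\bb G_4) \to GC^\bullet(\bb G_4')$ by counting (equivalence classes of) empty pentagons in the $4$-fold cover that miss $\pi_4^{-1}(\bb X)$, weighted by $V_1^{O_1}\cdots V_n^{O_n} v^{m}$ with $m$ defined via the point-measure exactly as for rectangles. One then shows $P$ is a chain map, constructs an inverse-direction map $P'$, and shows $P\circ P'$ and $P'\circ P$ are chain homotopic to the identity via maps counting hexagons. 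Each of these four identities ($P\partial = \partial P$, etc.) reduces, just as in Lemma~\ref{lem:partialbullet4 is a differential} and Proposition~\ref{prop:V_i homotopic to V_j}, to a "domains arise in exactly two ways" argument: one classifies the composite domains (pentagon$\ast$rectangle, rectangle$\ast$pentagon, pentagon$\ast$pentagon, hexagon$\ast$rectangle, \ldots), observes that after passing to the $4$-fold cover they are honestly embedded thin regions with a single reflex corner, and cuts them the two available ways. The point-measure additivity (Lemma~\ref{lem:point measure is additive}, suitably extended to pentagon and hexagon composites) guarantees the two contributions carry equal $v$-powers, and the $O$- and $X$-multiplicities add, so over $\bb F$ everything cancels except the desired terms.

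Next I would handle \textbf{stabilization invariance}, which is the more involved of the two. A stabilization replaces $\bb G$ of grid number $n$ with $\bb G'$ of grid number $n+1$, introducing a new $O$-marking (say $O_{n+1}$) and a new $X$-marking in a small corner configuration. Following the ordinary argument, one decomposes $GC^\bullet(\bb G_4')$ with respect to the intersection point at the new corner into $\tb I$ and $\tb N$ summands, identifies the $\tb I$–part (with its internal differential) with a shifted copy of the mapping cone of $V_{n+1}$ minus the "extra" variable on $GC^\bullet(\bb G_4)$, and then uses the standard homological-algebra cancellation lemma to reduce the big complex to $GC^\bullet(\bb G_4)$; the surviving $v$-gradings must be checked to match via the formulas \eqref{formula:maslov grading extension} and \eqref{formula:alexander grading extension} together with the combinatorics of the new corner. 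The combinatorial input is again a count of domains — here domains in $\bb G_4'$ passing near the stabilized corner — and once more the $4$-fold cover is what makes these domains embedded so the classification from the ordinary theory applies verbatim. I expect \textbf{the main obstacle} to be precisely this stabilization step: one must carefully track the $v$-variable (the double-point multiplicity $m[r_4]$) through the destabilization/cancellation, since unlike the $V_i$ and the Maslov grading, the $v$-power is computed by the point-measure formula rather than by intersection with markings, and verifying that the mapping-cone identification is $v$-equivariant (and bigraded of the correct degree) is where the new content of the double-point enhanced theory really lies. The commutation step, by contrast, should be a fairly mechanical transcription once the pentagon/hexagon point-measure bookkeeping is set up.
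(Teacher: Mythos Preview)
Your proposal is correct and follows essentially the same route as the paper: reduce via Cromwell's theorem to commutation and stabilization, handle commutation by pentagon and hexagon counts in $\bb G_4$ (Lemma~\ref{lem:Pbullet is bigraded}, Propositions~\ref{prop:Pbullet is a chain map} and~\ref{prop:Hbullet is a homotopy}), and handle stabilization by the $\tb I_4\oplus\tb N_4$ splitting and a mapping-cone comparison with $\Cone(V_1-V_2)$ on $GC^\bullet(\bb G_4)[V_1]$ (Theorem~\ref{thm:stabilization invariance}). Two small corrections: the pentagons counted are \emph{not} required to be empty (that is the point of the $v^{m}$ weight), and in the stabilization step the paper avoids separately tracking the $v$-power by deriving all the needed relations directly from the matrix components of the already-established identity $\partial_4^\bullet\circ\mathcal{H}_{X_2}^\bullet + \mathcal{H}_{X_2}^\bullet\circ\partial_4^\bullet = V_1 - V_2$, so the $v$-equivariance you flag as the main obstacle is in fact automatic.
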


By Cromwell's Theorem (Theorem 3.1.9 of \cite{OSS15}), it suffices to show that $GH^\bullet(\bb G)$ is invariant under commutation and stabilization moves. The arguments in the section heavily follow the arguments in Chapter 5 of \cite{OSS15} but are suitably adapted to the $4$-fold toroidal grid diagram setting. 

\subsection{Commutation invariance}

We adapt the proof of commutation invariance for unblocked grid homology. Let $\bb G$ differ from $\bb G'$ by a column commutation move and draw both diagrams on the same toroidal grid $T$ (see Figure 5.1 of \cite{OSS15}). We follow the same notation used in Section 5.1 of \cite{OSS15}. The vertical circles for $\bb G$ are $\beta_1,\ldots,\beta_n$ while the vertical circles for $\bb G'$ are $\beta_1,\ldots,\beta_{i-1},\gamma_i,\beta_{i+1},\ldots,\beta_n$. The indices are choosen so that $\beta_{k+1}$ is the vertical circle immediately to the east of $\beta_k$ for $k = 1,\ldots,n-1$. The two curved circles $\beta_i$ and $\gamma_i$ intersect at two points $a$ and $b$, where $a$ lies to the south of the bigon with $\beta_i$ to the west and $\gamma_i$ to the east. 

We also draw $\bb G_4$ and $\bb G_4'$ on the same $4$-fold toroidal grid. Choose a planar realization for $T$, and replicate the resulting $n\times n$ grid in each of the four quadrants of a $2n \times 2n$ grid. Then identify the top and bottom edges and identify the left and right edges to obtain the $4$-fold toroidal grid $T_4$. Each distinguished circle on $T$ becomes half of a circle on $T_4$. The circles on $T_4$ are labeled $\alpha_1^4,\ldots,\alpha_{2n}^4,\beta_1^4,\ldots,\beta_{2n}^4,\gamma_i^4,\gamma_{i+n}^4$ so that the indices respect the cyclic ordering from west to east and so that the projection $\pi_4$ sends $\alpha_j^4\mapsto \alpha_{(j\bmod n)}$, $\beta_j^4\mapsto\beta_{(j\bmod n)}$, $\gamma_i^4\mapsto\gamma_i$, and $\gamma_{i+n}^4\mapsto\gamma_i$. The $4$-fold toroidal grids $\bb G_4$ and $\bb G_4'$ share the same horizontal circles $\alpha_1^4,\ldots,\alpha_{2n}^4$ and the same $X$- and $O$-markings. The vertical circles for $\bb G_4$ are $\beta_1^4,\ldots,\beta_{2n}^4$ while the vertical circles for $\bb G_4'$ are $\beta_1^4,\ldots,\beta_{i-1}^4,\gamma_i^4,\beta_{i+1}^4,\ldots,\beta_{i-1+n}^4,\gamma_{i+n}^4,\beta_{i+1+n}^4,\ldots,\beta_{2n}^4$. There are now eight bigons, each containing exactly one $X$-marking and exactly one $O$-marking. 

\begin{df}[Pentagons]
Fix grid states $\tb x_4 \in \tb S_4(\bb G_4)$ and $\tb y_4' \in \tb S_4(\bb G_4')$. An embedded disk $p_4$ in $T_4$ whose boundary is the union of five arcs, each of which lying in one of the circles $\alpha_1^4,\ldots,\alpha_{2n}^4$, $\beta_1^4,\ldots,\beta_{2n}^4$, $\gamma_i^4$, $\gamma_{i+n}^4$ is called a \emph{pentagon from $\mathbf x_4$ to $\mathbf y_4'$} if
\begin{itemize}[nolistsep]
    \item[$\bullet$] Exactly four of the corners of $p_4$ are in $\tb x_4 \cup \tb y_4'$. The fifth corner point lies in the preimage $p_4^{-1}(a)$ of the distinguished point $a \in \beta_i \cap \gamma_i$. 
    \item[$\bullet$] Each corner point $x$ of $p_4$ is an intersection of two of the curves in $\{\alpha_j^4,\beta_j^4,\gamma_i^4,\gamma_{i+n}^4\}_{j=1}^{2n}$; and a small disk centered at $x$ is divided into four quadrants by these two curves. The pentagon $p$ contains exactly one of the four quadrants. 
    \item[$\bullet$] If $\partial_\alpha p_4$ denote the portion of the boundary of $p_4$ in $\alpha_1^4 \cup \cdots \cup \alpha_{2n}^4$, then \[
            \partial(\partial_\alpha p_4) + \partial(\partial_\alpha N(p_4)) + \partial(\partial_\alpha E(p_4)) + \partial(\partial_\alpha NE(p_4)) = \tb y_4' - \tb x_4.
        \]
\end{itemize}The four pentagons $p_4,N(p_4),E(p_4),NE(p_4)$ are declared to be equivalent, and we write $[p_4] = \{p_4, N(p_4), E(p_4),NE(p_4)\}$. If $p_4$ is a pentagon from $\tb x_4$ to $\tb y_4'$, then so are $N(p_4)$, $E(p_4)$, and $NE(p_4)$. The collection of equivalence classes of pentagons from $\tb x_4$ to $\tb y_4'$ is denoted $[\Pent](\tb x_4,\tb y_4')$. 
\end{df}

\begin{figure}[!ht]
	\centering
	\includegraphics[width=.55\textwidth]{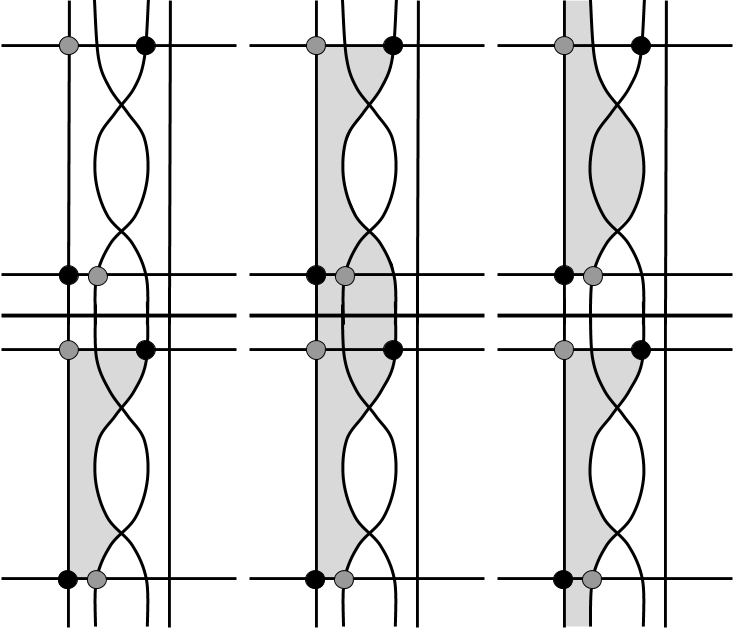}
	\hspace{60pt}
	\includegraphics[width=.18\textwidth]{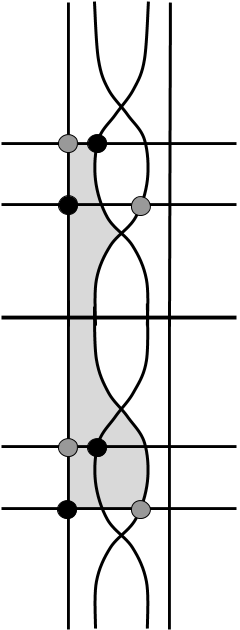}
	\captionsetup{width=.8\linewidth}
	\caption{On the left, there are three equivalence classes of pentagons from the black grid state to the gray grid state. On the right, there is a unique equivalence class of pentagons from the black grid state to the gray grid state.}
	\label{fig:pentagons}
\end{figure}

A pentagon from $\tb y_4' \in \tb S_4(\bb G_4')$ to $\tb x_4 \in \tb S_4(\bb G_4)$ is defined in the same way except that the fifth corner point lies in the preimage $p_4^{-1}(b)$ and the condition that $\partial(\partial_\alpha(p_4 + N(p_4) + E(p_4) + NE(p_4))) = \tb y_4' - \tb x_4$ is replaced by the condition that $\partial(\partial_\alpha(p_4 + N(p_4) + E(p_4) + NE(p_4))) = \tb x_4 - \tb y_4'$. The collection of equivalence classes of pentagons from $\tb y_4'$ to $\tb x_4$ is denoted $[\Pent](\tb y_4',\tb x_4)$. 

Note that $[\Pent](\tb x_4,\tb y_4')$ is empty unless $\tb x$ and $\tb y'$ in $T$ share exactly $n-2$ points. Assume $\tb x$ and $\tb y'$ share exactly $n-2$ points. If $\Pent(\tb x,\tb y')$ is empty, then $[\Pent](\tb x_4,\tb y_4')$ contains exactly one element. Otherwise, $\Pent(\tb x,\tb y')$ contains exactly one element, and $[\Pent](\tb x_4,\tb y_4')$ contains exactly three elements. See Figure~\ref{fig:pentagons}. 
If $[\Pent](\tb x_4,\tb y_4') = \{[p_4],[p_4'],[p_4'']\}$, then exactly two of $p_4,p_4',p_4''$ contains an entire bigon. In fact, if $p_4$ is the pentagon that does not contain a bigon, then $p_4'$ and $p_4''$ will each be equivalent to a composite of $p_4$ with a rectangle with one edge having length $n$. The pentagon $p_4$ is a lift of the unique pentagon $p\in \Pent(\tb x,\tb y')$. Hence we obtain a injective map $\Pent(\tb x,\tb y') \hookrightarrow [\Pent](\tb x_4,\tb y_4')$. 

Recall that there is a bijection $I\colon \tb S(\bb G') \to \tb S(\bb G)$ that sends a grid state $\tb x'$ to the unique grid state $\tb x = I(\tb x')$ which agrees with $\tb x'$ in all but one component. This correspondence induces a bijection $I_4\colon \tb S_4(\bb G_4') \to \tb S_4(\bb G_4)$. Let $r_4'$ be a rectangle from $\tb x_4' \in \tb S_4(\bb G_4')$ to $\tb y_4'\in \tb S_4(\bb G_4')$. Then there is a unique rectangle $r_4$ from $I_4(\tb x_4') \in \tb S_4(\bb G_4)$ to $I_4(\tb y_4')\in \tb S_4(\bb G_4)$ which agrees with $r_4'$ outside of the bigons in $T_4$. We write $r_4 = I_4(r_4')$. Likewise, a rectangle $r_4$ from $\tb x_4$ to $\tb y_4$ uniquely determines a rectangle $r_4'$ from $I_4^{-1}(\tb x_4)$ to $I_4^{-1}(\tb y_4)$. In this case we write $r_4' = I_4^{-1}(r_4)$. While the multiplicities $O_j[r_4], O_j[r_4']$ and $X_j[r_4],X_j[r_4']$ may differ, it is clear that $m[r_4] = m[r_4']$. 

Given a pentagon $p_4$ for which $[p_4]\in[\Pent](\tb x_4,\tb y_4')$, there is a unique rectangle $r_4$ with $[r_4] \in [\Rect](\tb x_4,I_4(\tb y_4'))$ such that $r_4$ and $p_4$ agree outside of the bigons. We write $r_4 = I_4(p_4)$, and we define $m(p_4) = m[r_4]$. The unique rectangle associated to a pentagon equivalent to $p_4$ is equivalent to $r_4$, so we use the notation $m[p_4]$ for this common value $m(p_4)$. Note that $m[p_4]$ could also be defined in terms of the unique rectangle $r_4'$ from $I_4^{-1}(\tb x_4)$ to $\tb y_4'$ which agrees with $p_4$ outside of the bigons since $m[r_4] = m[r_4']$ as previously observed. We write $r_4' = I_4^{-1}(p_4)$. Observe that if $p_4$ is the lift of a pentagon $p$ in $T$, then $m[p_4] = \#(\Int(p_4) \cap \tb x_4) = \#(\Int(p) \cap \tb x)$. 

Define the $\bb F[V_1,\ldots,V_n,v]$-module map $P^\bullet\colon GC^\bullet(\bb G_4) \to GC^\bullet(\bb G_4')$ by the formula \[
    P^\bullet(\tb x_4) = \sum_{\tb y_4' \in \tb S_4(\bb G_4')} \: \sum_{\substack{{[p_4]}\in[\Pent](\tb x_4,\tb y_4')\\p_4\cap \pi_4^{-1}(\bb X) = \emptyset}} V_1^{O_1[p_4]}\cdots V_n^{O_n[p_4]}v^{m[p_4]}\cdot\tb y_4'
\]where $O_i[p_4]$ is defined in the obvious manner. 

\begin{lem}\label{lem:Pbullet is bigraded}
The map $P^\bullet\colon GC^\bullet(\bb G_4) \to GC^\bullet(\bb G_4')$ is bigraded. 
\end{lem}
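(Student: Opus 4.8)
The plan is to compute how $P^\bullet$ interacts with the Maslov and Alexander gradings by analyzing a single pentagon $p_4$ with $[p_4] \in [\Pent](\tb x_4, \tb y_4')$ and $p_4 \cap \pi_4^{-1}(\bb X) = \emptyset$, and showing that the contribution $V_1^{O_1[p_4]} \cdots V_n^{O_n[p_4]} v^{m[p_4]} \cdot \tb y_4'$ lies in the same bidegree as $\tb x_4$. Since $M$ and $A$ are extended to monomials by Formulas~\ref{formula:maslov grading extension} and \ref{formula:alexander grading extension}, it suffices to establish the two identities
\[
    M(\tb x_4) - M(\tb y_4') = -2\ts\sum_i O_i[p_4] + 2m[p_4], \qquad A(\tb x_4) - A(\tb y_4') = \ts\sum_i X_i[p_4] - \ts\sum_i O_i[p_4].
\]

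First I would reduce to the rectangle case via the bijection $I_4$. By definition $M(\tb y_4') = M(I_4(\tb y_4'))$ and $A(\tb y_4') = A(I_4(\tb y_4'))$, since the Maslov and Alexander gradings on grid states of $\bb G_4'$ are defined through the grid states of $\bb G'$, which correspond under $I$ to grid states of $\bb G$ with the same gradings (this is the standard fact used in the commutation argument of Chapter 5 of \cite{OSS15}). Set $r_4 = I_4(p_4)$, so $[r_4] \in [\Rect](\tb x_4, I_4(\tb y_4'))$ and $m[p_4] = m[r_4]$ by our definition of $m[p_4]$. Lemma~\ref{lem:grading formulas} applied to $r_4$ gives
\[
    M(\tb x_4) - M(I_4(\tb y_4')) = 1 - 2\ts\sum_i O_i[r_4] + 2m[r_4], \qquad A(\tb x_4) - A(I_4(\tb y_4')) = \ts\sum_i X_i[r_4] - \ts\sum_i O_i[r_4].
\]
The task then becomes comparing the marking multiplicities of $p_4$ with those of $r_4$, together with accounting for the extra $+1$ in the Maslov formula, which must be absorbed because pentagons shift Maslov grading by one less than rectangles do relative to the target state.

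The heart of the argument is bookkeeping inside the eight bigons of $T_4$. The pentagon $p_4$ and the rectangle $r_4 = I_4(p_4)$ agree outside the bigons, so $O_i[p_4]$ and $O_i[r_4]$ (resp.\ $X_i[p_4]$ and $X_i[r_4]$) differ only by contributions from the bigon regions that $r_4$ covers but $p_4$ does not, or vice versa. Each bigon in $T_4$ contains exactly one $O$-marking and one $X$-marking. A pentagon from $\tb x_4$ to $\tb y_4'$ has its fifth corner at a preimage of $a$, and passing from $p_4$ to $r_4$ replaces the portion near that corner: one verifies that $r_4$ picks up exactly one more half-bigon worth of area on one side, and because each bigon carries one $O$ and one $X$, the count $\sum_i O_i[r_4] - \sum_i O_i[p_4]$ equals $\sum_i X_i[r_4] - \sum_i X_i[p_4]$, which immediately yields the Alexander identity once the rectangle version is in hand. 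For the Maslov identity, I would argue that the point-measure difference $p_{\tb x_4}[r_4] + p_{I_4(\tb y_4')}[r_4] - \big(p_{\tb x_4}[p_4] + p_{\tb y_4'}[p_4]\big)$ (defining the pentagon point-measures in the evident way, or equivalently tracking $m$) accounts for exactly the discrepancy between the ``$+1$'' and ``$+0$'' constants together with the change in $\sum_i O_i$; the clean statement is that $m[p_4] = m[r_4] - (\sum_i O_i[r_4] - \sum_i O_i[p_4])$ plus a correction of $\tfrac12$ coming from the fifth corner being a ``genuine'' corner of $p_4$ rather than an interior/boundary point, and this is precisely what eats the leftover $1$. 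I expect the main obstacle to be this last step: carefully verifying the half-integer accounting near the fifth corner at $a$ (and separately at $b$ for pentagons in the other direction, which enters Lemma~\ref{lem:Pbullet is bigraded} only implicitly but is needed for the homotopy later) and confirming that in the cases where $[\Pent](\tb x_4, \tb y_4')$ has three elements, all three representatives—the one not containing a bigon and the two obtained by composing with a width- or height-$n$ rectangle—give the same bidegree shift, which should follow since composing with such a rectangle adds $1$ to some $O_i$, adds $1$ to some $X_i$, and adds the appropriate amount to $m$, leaving $M$ and $A$ of the weighted target unchanged. Once these identities are checked on a single pentagon, $\bb F[V_1,\ldots,V_n,v]$-linearity of $P^\bullet$ and Formulas~\ref{formula:maslov grading extension}--\ref{formula:alexander grading extension} give that $P^\bullet$ is bigraded.
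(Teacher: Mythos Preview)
Your overall strategy---reduce to the rectangle formula of Lemma~\ref{lem:grading formulas} via $r_4 = I_4(p_4)$ and then bookkeep the difference---is exactly what the paper does. But there is a genuine error at the very first step: the claim that $M(\tb y_4') = M(I_4(\tb y_4'))$ and $A(\tb y_4') = A(I_4(\tb y_4'))$ is false. The nearest-point bijection $I\colon \tb S(\bb G') \to \tb S(\bb G)$ does \emph{not} preserve gradings; the whole content of Lemma~5.1.4 of \cite{OSS15} is the formula for the shift it introduces, namely
\[
    M(I_4(\tb y_4')) - M(\tb y_4') = -1 + 2\textstyle\sum_j O_j[t_4], \qquad A(I_4(\tb y_4')) - A(\tb y_4') = \textstyle\sum_j O_j[t_4] - \textstyle\sum_j X_j[t_4],
\]
where $t_4$ is (a lift of) the small positive triangle at $\tb y'$. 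This is precisely the term that kills the spare ``$+1$'' in the rectangle Maslov formula, and it is why your later paragraph becomes incoherent: you already correctly noted that $m[p_4] = m[r_4]$ by definition, so there is nothing to adjust in $m$, and the half-integer ``correction'' you reach for does not exist.

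Once you insert the triangle correction, the argument goes through exactly as the paper has it: one writes $r_4$ as $p_4 \pm t_4$ (or $p_4 + B' - \tau_4$ in the case where the bigon at $y_4$ does not have $a_4$ as a corner), reads off $\sum_j O_j[r_4] = \sum_j O_j[p_4] + \sum_j O_j[t_4]$ and likewise for $X$, and adds the rectangle formula for $M(\tb x_4) - M(I_4(\tb y_4'))$ to the triangle formula for $M(I_4(\tb y_4')) - M(\tb y_4')$. Your separate worry about the three-element $[\Pent]$ case is moot: two of those three classes always contain a bigon and hence an $X$-marking, so they never contribute to $P^\bullet$.
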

\begin{proof}
Let $\tb x_4,\tb y_4'$ be grid states for which $\tb x$ and $\tb y'$ share exactly $n-2$ points. First suppose that there is a pentagon $p$ from $\tb x$ to $\tb y'$. Let $[p_4]$ be the corresponding equivalence class of pentagons from $\tb x_4$ to $\tb y_4'$. Then $O_j(p) = O_j[p_4]$ and $X_j(p) = X_j[p_4]$ for $j = 1,\ldots,n$ and as previously observed, $m[p_4] = \#(\Int(p) \cap \tb x)$. Hence $[p_4] \cap \pi_4^{-1}(\bb X) = \emptyset$ if and only if $p \cap \bb X = \emptyset$. The proof of Lemma 5.1.3 of \cite{OSS15} implies that \begin{align*}
    M(\tb x_4) - M(\tb y_4') &= -2\ts\sum_j O_j[p_4] + 2m[p_4]\\
    A(\tb x_4) - A(\tb y_4') &= \ts\sum_j X_j[p_4] - \ts\sum_j O_j[p_4]
\end{align*}so $P^\bullet$ preserves both the Maslov and Alexander gradings in this case. The other two classes in $[\Pent](\tb x_4,\tb y_4')$ contain a bigon and hence intersect $\pi_4^{-1}(\bb X)$ nontrivially. 

Now assume that $\Pent(\tb x,\tb y') = \emptyset$ so that there is a unique class $[p_4] \in [\Pent](\tb x_4,\tb y_4')$. Fix a representative $p_4$ and let $a_4$ be the fifth corner point of $p_4$. Of the two edges of $p_4$ which have $a_4$ as an endpoint, exactly one has its other endpoint $y_4$ in $\tb y_4'$. Then $y_4$ lies on the boundary of a bigon $B$. We first consider the case when $B$ has $a_4$ as a corner point. Let $t_4$ be the triangle strictly contained in $B$ having $a_4$ and $y_4$ as vertices and whose three edges are arcs, each lying on one of $\alpha_j^4,\beta_j^4,\gamma_i^4,\gamma_{i+n}^4$. Then $t_4$ is a lift of the small positive triangle $t_{\tb y}$ described in Lemma 5.1.4 of \cite{OSS15}. Using Equations (5.4) and (4.3) of \cite{OSS15}, we find that \begin{align}
    M(\tb y_4) - M(\tb y_4') &= -1 + 2\ts\sum_j O_j[t_4]\label{eq:trianglemaslov}\\
    A(\tb y_4) - A(\tb y_4') &= \ts\sum_j O_j[t_4] - \ts\sum_j X_j[t_4].\label{eq:trianglealexander}
\end{align}The domain $p_4 + t_4$ is the unique rectangle $r_4 = I_4(p_4)$ with $[r_4] \in [\Rect](\tb x_4,I_4(\tb y_4'))$ such that $r_4$ and $p_4$ agree outside of the bigons. It follows that $O_j[p_4] + O_j[t_4] = O_j[r_4]$ and $X_j[p_4] + X_j[t_4] = X_j[r_4]$ for $j = 1,\ldots, n$. By Formula~\ref{eq:maslov}, it follows that \begin{align*}
    M(\tb x_4) - M(\tb y_4') &= \left(1 - 2\ts\sum_j O_j[r_4] + 2m[r_4]\right) + \left(-1 + 2\ts\sum_j O_j[t_4]\right)\\
    &= -2\ts\sum_j O_j[p_4] + 2m[p_4]
\end{align*}so Maslov grading is preserved. Then by Formula~\ref{eq:alexander}, \begin{align*}
    A(\tb x_4) - A(\tb y_4') &= \left(\ts\sum_j X_j[r_4] - \ts\sum_j O_j[r_4]\right) + \left(\ts\sum_j O_j[t_4] - \ts\sum_j X_j[t_4]\right)\\
    &= \ts\sum_j X_j[p_4] - \ts\sum_j O_j[p_4]
\end{align*}so Alexander grading is also preserved. 

Now consider the case that $a_4$ is not a corner point of $B$. Then exactly one corner point of $B$ must be $N(a_4)$; let $b_4$ be the other corner point. Let $t_4$ be the triangle strictly contained in $B$ with vertices $y_4$ and $N(a_4)$ whose three edges are arcs, each lying on one of $\alpha_j^4,\beta_j^4,\gamma_i^4,\gamma_{i+n}^4$. Then $t_4$ is a lift of $t_{\tb y}$ so Equations~\ref{eq:trianglemaslov} and \ref{eq:trianglealexander} remain valid. Let $\tau_4$ be the triangle strictly contained in $B$ with vertices $y_4$ and $b_4$ so that $B = t_4 \cup \tau_4$ and so that $t_4$ and $\tau_4$ share one edge. It follows that $\sum_j O_j[t_4] + \sum_j O_j[\tau] = 1 = \sum_j X_j[t_4] + \sum_j X_j[\tau_4]$. Let $B'$ be the bigon with vertices $a_4$ and $b_4$. Then the domain $p_4 + B' - \tau_4$ is the rectangle $r_4 = I_4(p_4)$. Since $\sum_j O_j[B'] = 1 = \sum_j X_j[B']$ we find that \begin{align*}
    M(\tb x_4) - M(\tb y_4') &= \left(1 - 2\ts\sum_j O_j[r_4] + 2m[r_4]\right) + \left(-1 + 2\ts\sum_j O_j[t_4]\right)\\
    &= -2\ts\sum_j O_j[p_4] - 2\ts\sum_j O_j[B'] + 2\ts\sum_j O_j[\tau_4] + 2m[p_4] + 2\ts\sum_j O_j[t_4]\\
    &= -2\ts\sum_j O_j[p_4] + 2m[p_4]
\end{align*}again using Formula~\ref{eq:maslov}. By Formula~\ref{eq:alexander} we also find that \begin{align*}
    A(\tb x_4) - A(\tb y_4') &= \left(\ts\sum_j X_j[r_4] - \ts\sum_jO_j[r_4] \right) + \left(\ts\sum_j O_j[t_4] - \ts\sum_j X_j[t_4]\right)\\
    &= \ts\sum_j X_j[p_4] + \ts\sum_j X_j[B'] - \ts\sum_j X_j[\tau_4] - \ts\sum_j O_j[p_4]\\
    &\qquad - \ts\sum_j O_j[B'] + \ts\sum_j O_j[\tau_4] + \ts\sum_jO_j[t_4] - \ts\sum_j X_j[t_4]\\
    &= \ts\sum_j X_j[p_4] - \ts\sum_j O_j[p_4]
\end{align*}so in all cases $P^\bullet$ preserves both Maslov and Alexander grading. 
\end{proof}

\begin{prop}\label{prop:Pbullet is a chain map}
The map $P^\bullet$ is a chain map. 
\end{prop}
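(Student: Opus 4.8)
The plan is to show that $P^\bullet \circ \partial_4^\bullet = \partial_4^{\bullet\prime} \circ P^\bullet$ by the standard ``count juxtapositions and match them in pairs'' argument, carried out in the $4$-fold cover $T_4$ where the relevant domains become embedded. Fix grid states $\tb x_4 \in \tb S_4(\bb G_4)$ and $\tb z_4' \in \tb S_4(\bb G_4')$. The coefficient of $\tb z_4'$ in $(P^\bullet\circ\partial_4^\bullet + \partial_4^{\bullet\prime}\circ P^\bullet)(\tb x_4)$ is a sum over juxtapositions $r_4 * p_4$ (a rectangle in $\bb G_4$ followed by a pentagon) and $p_4 * r_4'$ (a pentagon followed by a rectangle in $\bb G_4'$), where all the $X$-multiplicities vanish. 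Using $O_j[r_4*p_4] = O_j[r_4] + O_j[p_4]$ and the additivity of $m$ (Lemma~\ref{lem:point measure is additive}, which extends to pentagon-rectangle composites via the associated rectangle $I_4(p_4)$), each such juxtaposition contributes a monomial $V_1^{O_1}\cdots V_n^{O_n}v^m$ depending only on the underlying composite domain. The goal is to pair up the contributing juxtapositions so that paired terms carry identical monomials, forcing the total to vanish over $\bb F = \Z/2\Z$.

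The key step is the combinatorial classification of the composite domains $\psi_4$ from $\tb x_4$ to $\tb z_4'$ that arise, exactly as in the proof that $\partial_4^\bullet$ is a differential (Lemma~\ref{lem:partialbullet4 is a differential}) and in the commutation-invariance argument of Chapter~5 of \cite{OSS15}. I would split into cases according to how many points $\tb x$ and $\tb z'$ share in $T$ (equivalently, by the shape of the domain): the generic case where the composite is an embedded ``hexagon-like'' region admitting exactly two decompositions as rectangle-then-pentagon and pentagon-then-rectangle; the degenerate cases where a corner coincidence forces the region to degenerate; and the case $\tb x = I_4(\tb z_4')$ where one must check directly, using the bigons and the fact that the composite must wrap an annulus or contain an $X$-marking, that the offending juxtapositions either do not satisfy $r_4 \cap \pi_4^{-1}(\bb X) = \emptyset$ or cancel in pairs. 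In each case, working in $T_4$ guarantees the domains are embedded (they overlap at most once in $T$, hence lift to embedded regions in the $2$-fold-by-$2$-fold cover), so the cut-in-the-other-direction operation is well defined and produces a genuine pentagon and a genuine rectangle, exactly mirroring the ordinary argument.

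The main obstacle is the careful bookkeeping around the bigons and the distinguished points $a, b$: a juxtaposition involving a pentagon can degenerate in ways that have no analogue for rectangle-rectangle composites, and one must verify that the ``cut the other way'' operation sends pentagons to pentagons (i.e. the fifth corner still lies over $a$ or $b$ with the correct quadrant convention) and respects the $X$-emptiness condition. I expect the subtle point to be exactly the case $\tb x = I_4(\tb z_4')$, where as in the differential computation one shows the relevant composite contains a thin annulus or an $X$-marking; here one should check that the pentagon's bigon-crossing behavior does not accidentally produce an $X$-free composite, and that no spurious contribution survives. Since $P^\bullet$ is already known to be bigraded (Lemma~\ref{lem:Pbullet is bigraded}), no additional grading check is needed, and the conclusion is that $P^\bullet$ is a chain map.
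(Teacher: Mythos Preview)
Your outline is correct and matches the paper's approach: the same three-way case split on $|\tb x \setminus (\tb x \cap \tb z')|$, the same cut-the-other-way pairing in the $4$-fold cover, and the same use of Lemma~\ref{lem:point measure is additive} via $I_4$ to match the $v$-exponents.

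One correction of emphasis: you identify the case $\tb x_4 = I_4(\tb z_4')$ as the subtle point, but in the paper that case is the easy one --- the $X$-emptiness forces the pentagon and rectangle to be small enough to be lifts from $T$, and the pairing reduces to Case (P-3) of Lemma~5.1.4 of \cite{OSS15}. The real work lies in your ``generic'' case, the three-point case (P-2): there the assertion that the monomial ``depends only on the underlying composite domain'' is not immediate, because $m[p_4]$ is \emph{defined} as $m[I_4(p_4)]$, so one must verify that $I_4(\theta)\ast I_4(\theta')$ and $I_4(\Theta)\ast I_4(\Theta')$ coincide as domains before Lemma~\ref{lem:point measure is additive} applies. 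The paper does this by introducing a notion of ``edges'' of the composite $\theta\ast\theta'$ and reconstructing $I_4(\theta)\ast I_4(\theta')$ from those edges alone, so that the result is visibly independent of the decomposition. Your parenthetical ``extends to pentagon--rectangle composites via the associated rectangle $I_4(p_4)$'' is pointing at exactly this, but you should be aware that it is the step requiring the most care.
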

\begin{proof}
We must verify that $\partial_4^\bullet \circ P^\bullet + P^\bullet\circ\partial_4^\bullet = 0$. Fix grid states $\tb x_4 \in \tb S_4(\bb G_4), \tb z_4' \in \tb S_4(\bb G_4')$. We show that the coefficient of $\tb z_4'$ in the expression $(\partial_4^\bullet\circ P^\bullet + P^\bullet\circ\partial_4^\bullet)(\tb x_4)$ is zero. Let $\mathcal{P}$ be the collection of all pairs $[p_4]\in[\Pent](\tb x_4,\tb y_4')$, $[r_4'] \in [\Rect](\tb y_4',\tb z_4')$ with $[r_4] \cap \pi_4^{-1}(\bb X) = \emptyset = [p_4'] \cap \pi_4^{-1}(\bb X)$ and all pairs $[r_4]\in[\Rect](\tb x_4,\tb y_4)$, $[p_4'] \in[\Pent](\tb y_4,\tb z_4')$ with $[r_4] \cap \pi_4^{-1}(\bb X) = \emptyset = [p_4'] \cap \pi_4^{-1}(\bb X)$. The coefficient of $\tb z_4$ in $(\partial_4^\bullet\circ P^\bullet + P^\bullet\circ\partial_4^\bullet)(\tb x_4)$ is \[
    \sum_{[\theta],[\theta']\in\mathcal{P}} V_1^{O_1[\theta]+O_1[\theta']}\cdots V_n^{O_n[\theta]+O_n[\theta']}v^{m[\theta]+m[\theta']}.
\]For each pair $[\theta],[\theta']$ in $\mathcal{P}$, we construct a different pair $[\Theta],[\Theta']$ in $\mathcal{P}$ with the property that $O_j[\theta] + O_j[\theta'] = O_j[\Theta] + O_j[\Theta']$ for $j = 1,\ldots,n$ and $m[\theta] + m[\theta'] = m[\Theta] + m[\Theta']$. Furthermore, the pair constructed from $[\Theta],[\Theta']$ will be $[\theta],[\theta']$ so that the number of pairs in $\mathcal{P}$ contributing a given coefficient will be even. There are three cases: 
\setlength\leftmargini{\dimexpr\leftmargini + 0.5em\relax}
\begin{itemize}
    \item[(P-1)] $\tb x \setminus (\tb x \cap \tb z')$ consists of four points. Let $[\theta],[\theta']$ be a pair in $\mathcal{P}$. Then there is a uniquely determined grid state $\tb w_4'' \in \tb S_4(\bb G_4)\cup\tb S_4(\bb G_4')$ for which $\theta'$ determines an equivalence class $[\Theta']$ from $\tb x_4$ to $\tb w_4''$ and for which $\theta$ determines an equivalence class $[\Theta]$ from $\tb w_4''$ to $\tb z_4'$. The fact that $O_j[\theta] + O_j[\theta'] = O_j[\Theta'] + O_j[\Theta]$ for $j = 1,\ldots,n$ is clear, while the statement that $m[\theta] + m[\theta'] = m[\Theta'] + m[\Theta]$ follows from Lemma~\ref{lem:point measure is additive}. 
    \item[(P-2)] $\tb x \setminus (\tb x \cap \tb z')$ consists of three points. Let $[\theta],[\theta']$ be a pair in $\mathcal{P}$ where $[\theta]$ is an equivalence class from $\tb x_4$ to $\tb y_4'' \in \tb S_4(\bb G_4) \cup \tb S_4(\bb G_4')$. Let $s$ be the unique point in the intersection of $\tb y'' \setminus (\tb x \cap \tb y'')$ and $\tb y'' \setminus (\tb z' \cap \tb y'')$, and let $s_4 \in T_4$ be the unique preimage of $s$ under $\pi_4$ that is a corner point of $\theta$. There is a unique representative of $[\theta']$ which also has $s_4$ as a corner point. Without loss of generality, we may assume that this representative is $\theta'$. 

    Consider the composite domain $\theta\ast \theta'$. There are uniquely determined edges $E$ of $\theta$ and $E'$ of $\theta'$ which have $s_4$ as an endpoint such that either $E \subset E'$ or $E \supset E'$. We show that $E \neq E'$. If an endpoint of either $E$ or $E'$ lies in the preimage $p_4^{-1}(a)$, then $E \neq E'$ since only one of $\theta,\theta'$ is a pentagon. If the endpoints of $E$ and $E'$ are all points in $\tb x_4 \cup \tb y_4'' \cup \tb z_4'$ then $E \neq E'$ because $\tb x \setminus (\tb x \cap \tb z')$ consists of three points. Let $F$ be the shorter of the two edges $E,E'$, and let $t_4$ be the endpoint of $F$ that is not $s_4$. Then $t_4$ is a $270^\circ$ corner of $\theta\ast\theta'$, and cutting in the other direction yields a decomposition of $\theta\ast\theta'$ as the composite $\Theta\ast\Theta'$ where $[\Theta],[\Theta']$ is a pair in $\mathcal{P}$. It is clear that $O_j[\theta] + O_j[\theta'] = O_j[\Theta] + O_j[\Theta']$ for $j = 1,\ldots,n$. 

    We show that $m[\theta] + m[\theta'] = m[\Theta] + m[\Theta']$. We will define the \emph{edges} of the domain $\theta\ast\theta'$ in such a way that $\theta\ast\theta'$ will have seven edges. Consider first the collection of edges of $\theta$ and $\theta'$. There are two edges of $\theta$ which have $s_4$ as an endpoint, one of which already specified as $E$. Let $A$ be the other. Similarly, let $B$ be the unique edge of $\theta'$ having $s_4$ as an endpoint that is not equal to $E'$. Let $F'$ be the longer of the two edges $E,E'$. We remove the edge $F$ from the collection of edges, we replace $F'$ with the closure of $F' \setminus F$, and we replace the two edges $A$ and $B$ with a single edge which is their union $A \cup B$. The endpoints of the closure of $F'\setminus F$ are the two endpoints of $E$ and $E'$ which are not $s_4$, and the endpoints of $A \cup B$ are the two endpoints of $A$ and $B$ that are not $s_4$. The resulting set is the collection of edges of $\theta\ast\theta'$. Clearly the edges of $\theta\ast\theta'$ coincide with the edges of $\Theta\ast\Theta'$. 

    Now we show that $I_4(\theta)\ast I_4(\theta') = I_4(\Theta)\ast I_4(\Theta')$ where $I_4(r_4) = r_4$ for a rectangle $r_4$ connecting grid states in $\bb G_4$. We do so by constructing the domain $I_4(\theta)\ast I_4(\theta')$ from the domain $\theta\ast\theta'$ using its edges. Since the edges of $\theta\ast\theta'$ and $\Theta\ast\Theta'$ are identical, the claim will follow. Let $a_4\in\pi_4^{-1}(a)$ be the fifth point of the pentagon in the pair $\theta,\theta'$, and without loss of generality we may assume that $a_4 \in \beta_i^4\cap\gamma_i^4$. Each edge of $\theta\ast\theta'$ which lies along either $\gamma_i^4$ or $\beta_i^4$ has $a_4$ as an endpoint. Let $D$ be an edge which lies along $\gamma_i^4$ and let its endpoints be $a_4$ and $c_4'$. Let $c_4$ be the intersection of the horizontal circle $\alpha_j^4$ which contains $c_4'$ with $\beta_i^4$. If $D$ is contained entirely in the boundary of a single bigon, then we add the small positive triangle with corner points $c_4$, $c_4'$, and $a_4$ to the domain $\theta\ast\theta'$. Otherwise, let $B$ be the one on which $c_4$ and $c_4'$ lie, and let $b_4$ be the corner point of $B$ that is not equivalent to $a_4$. Let $B'$ be the bigon having corner points $a_4$ and $b_4$. Then we add $B'$ to the domain $\theta\ast\theta'$ and subtract the triangle having corner points $c_4$, $c_4'$, and $b_4$. The resulting domain is $I_4(\theta)\ast I_4(\theta') = I_4(\Theta)\ast I_4(\Theta')$. Hence by Lemma~\ref{lem:point measure is additive}, we conclude that $m[\theta] + m[\theta'] = m[I_4(\theta)] + m[I_4(\theta')] = m[I_4(\Theta)] + m[I_4(\Theta')] = m[\Theta] + m[\Theta']$. 
    \item[(P-3)] $\tb x \setminus (\tb x \cap \tb z')$ consists of a single point so that $\tb x_4 = I_4(\tb z_4')$. The lengths of the edges of the rectangle $r_4''$ in the pair $[\theta],[\theta']$ must be shorter than $n$ rows or columns as it cannot contain an $X$-marking. Let $p_4$ be the pentagon in the pair, with fifth point $a_4 \in \pi_4^{-1}(a)$ lying in the intersection $\gamma_i^4 \cap \beta_i^4$. Of the four other corner points of $p_4$, two are endpoints of edges which have an endpoint at $a_4$. Let $E$ be the unique (vertical) edge of $p_4$ joining the last two corner points. As $\tb x_4 = I_4(\tb z_4')$, and because $p_4 \cap \pi_4^{-1}(\bb X) = \emptyset = r_4'' \cap \pi_4^{-1}(\bb X)$, it is either the case that the sum of the length of $E$ with the length of the vertical edge of $r_4''$ is $n$, or the sum of the lengths of the horizontal edges of $I_4(p_4)$ and $r_4''$ is $n$. In the first case, the length of the horizontal edge of $r_4''$ is $1$, and in the second case, the length of $E$ is $1$. It follows that $p_4$ and $r_4''$ are lifts of a pentagon $p$ and a rectangle $r''$, respectively, in $T$. In particular, $O_j[p_4] = O_j(p)$, $O_j[r_4''] = O_j(r'')$ and $m[p_4] = 0 = m[r_4'']$. The argument in Case (P-3) of Lemma 5.1.4 of \cite{OSS15} is easily adapted to finish the construction of $\Theta$ and $\Theta'$. 
\end{itemize}We have shown that the number of pairs in $\mathcal{P}$ contributing a given coefficient is even, so $\partial_4^\bullet\circ P^\bullet + P^\bullet\circ\partial_4^\bullet = 0$ as required. 
\end{proof}

We define the analogous map $(P^\bullet)'\colon GC^\bullet(\bb G_4') \to GC^\bullet(\bb G_4)$ by \[
    (P^\bullet)'(\tb y_4') = \sum_{\tb x_4\in\tb S_4(\bb G_4)}\: \sum_{\substack{[p_4] \in [\Pent](\tb y_4',\tb x_4)\\{p_4 \cap \pi_4^{-1}(\bb X) = \emptyset}}} V_1^{O_1[p_4]}\cdots V_n^{O_n[p_4]}v^{m[p_4]}\cdot\tb x_4
\]for $\tb y_4' \in \tb S_4(\bb G_4')$. The same arguments used in Lemma~\ref{lem:Pbullet is bigraded} and Proposition~\ref{prop:Pbullet is a chain map} show that $(P^\bullet)'$ is a bigraded chain map. We will show that $P^\bullet$ and $(P^\bullet)'$ are homotopy inverses of each other so that they induce isomorphisms on homology. 

\begin{df}[Hexagons]
Let $\tb x_4,\tb y_4$ be grid states in $\tb S_4(\bb G_4)$. An embedded disk $h_4$ in $T_4$ whose boundary is the union of six arcs, each of which lying in some $\alpha_j^4,\beta_j^4,\gamma_i^4,\gamma_{i+n}^4$, is called a \emph{hexagon from $\mathbf x_4$ to $\mathbf y_4$} if \begin{itemize}[nolistsep]
    \item[$\bullet$] At every corner point $x$ of $h_4$, the hexagon contains exactly one of the four quandrants determined by the two intersecting curves at $x$. 
    \item[$\bullet$] Four of the corner points of $h_4$ are in $\tb x_4 \cup \tb y_4$, one corner point is in $\pi_4^{-1}(a)$, and one corner point is in $\pi_4^{-1}(b)$. 
    \item[$\bullet$] $\partial(\partial_\alpha h)) + \partial(\partial_\alpha N(h)) + \partial(\partial_\alpha E(h)) + \partial(\partial_\alpha NE(h)) = \tb y_4 - \tb x_4$. 
\end{itemize}The four hexagons $h_4$, $N(h_4)$, $E(h_4)$, $NE(h_4)$ are declared to be equivalent, and we set $[h_4] = \{h_4,N(h_4),E(h_4),NE(h_4)\}$. If $h_4$ is a hexagon from $\tb x_4$ to $\tb y_4$, then so are $N(h_4)$, $E(h_4)$, and $NE(h_4)$. The set of equivalence classes of hexagons from $\tb x_4$ to $\tb y_4$ is denoted $[\Hex](\tb x_4,\tb y_4)$. 
\end{df}

\begin{figure}[!ht]
	\centering
	\includegraphics[width=.75\textwidth]{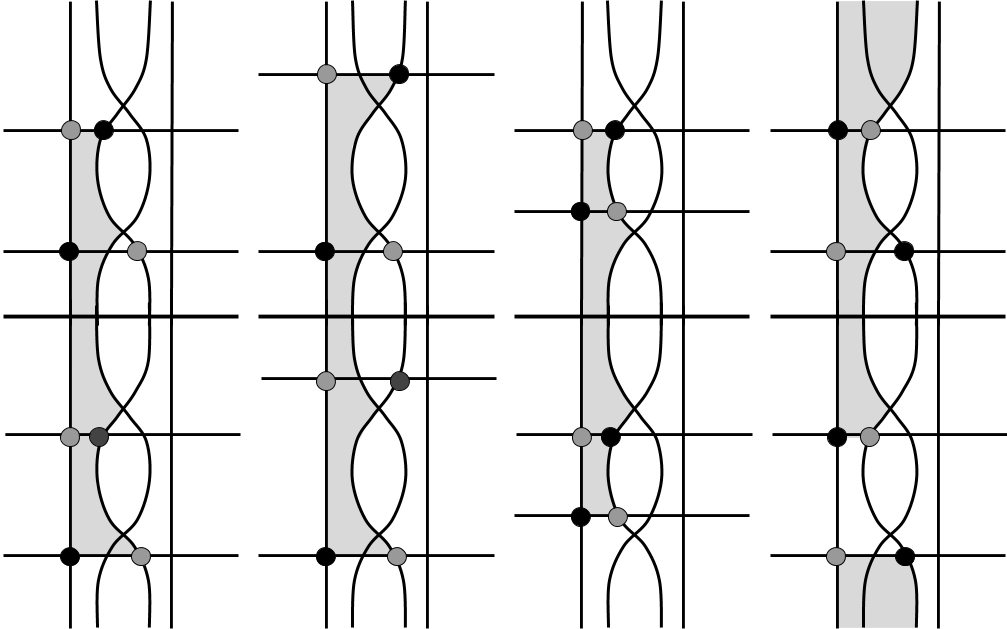}
	\captionsetup{width=.8\linewidth}
	\caption{In each of these four examples, the hexagon drawn represents the unique equivalence class of hexagons from the black grid state to the gray grid state.}
	\label{fig:otherfourhexagons}
\end{figure}

A hexagon from $\tb x_4'\in \tb S_4(\bb G_4')$ to $\tb y_4' \in \tb S_4(\bb G_4')$ is defined in the same way except with each instance of $\tb x_4$ replaced by $\tb x_4'$ and each instance of $\tb y_4$ replaced by $\tb y_4'$. The collection of equivalence classes of hexagons from $\tb x_4'$ to $\tb y_4'$ is denoted $[\Hex](\tb x_4',\tb y_4')$. Note that the two corner points $a_4\in \pi_4^{-1}(a)$ and $b_4 \in \pi_4^{-1}(b)$ of a hexagon $h_4$ lie on the same vertical circle. Also observe that if the unique edge of $h_4$ whose endpoints are $a_4$ and $b_4$ is not an edge of a bigon, then $h_4$ contains an entire bigon and hence an $X$-marking. If $h_4$ is a hexagon from $\tb x_4$ to $\tb y_4$, then there is a unique rectangle from $\tb x_4$ to $\tb y_4$ which agrees with $h_4$ outside of the bigons. This rectangle is denoted $I_4(h_4)$. When $h_4$ does not contain an $X$-marking, the rectangle $I_4(h_4)$ is obtained by adding the bigon whose edge coincides with the edge of $h_4$ connecting $a_4$ and $b_4$. The value $m[h_4]$ is defined to be $m[I_4(h_4)]$. If $h_4$ is the lift of a hexagon $h$ in $T$ from $\tb x$ to $\tb y$, then $m[h_4] = \#(\Int(h) \cap \tb x)$. 

\begin{figure}[!ht]
	\centering
	\includegraphics[width=.75\textwidth]{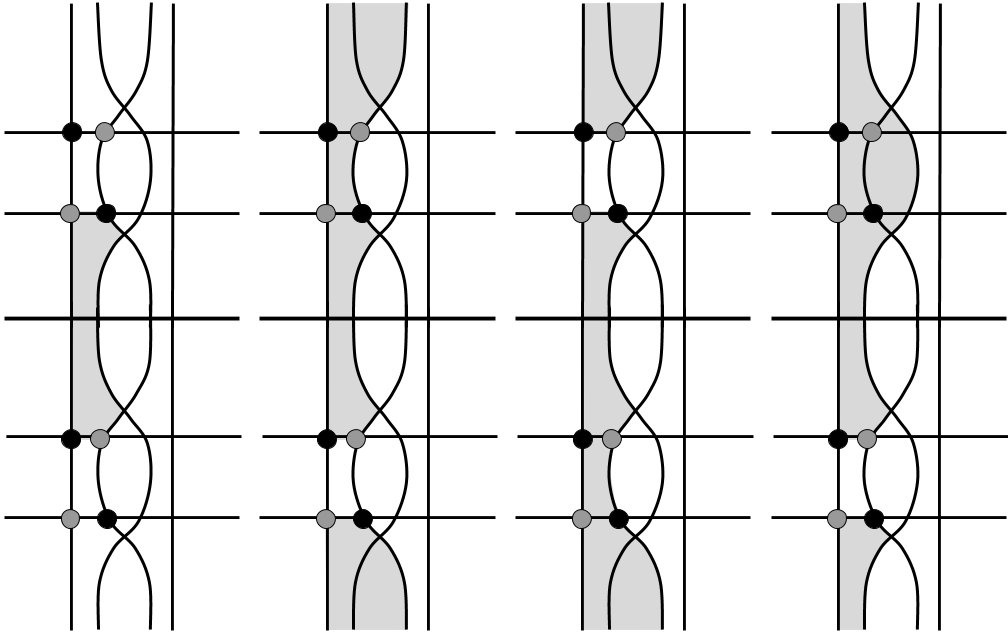}
	\captionsetup{width=.8\linewidth}
	\caption{If there is a hexagon from $\tb{x}$ to $\tb{y}$, then there are exactly four equivalence classes of hexagons from $\tb{x}_4$ to $\tb{y}_4$. Three of these four contain a bigon.}
	\label{fig:fourhexagons}
\end{figure}

Define the $\bb F[V_1,\ldots,V_n,v]$-module map $H^\bullet\colon GC^\bullet(\bb G_4) \to GC^\bullet(\bb G_4)$ by \[
    H^\bullet(\tb x_4) = \sum_{\tb y_4 \in \tb S_4(\bb G_4)}\: \sum_{\substack{[h_4] \in [\Hex](\tb x_4,\tb y_4)\\ h_4 \cap \pi_4^{-1}(\bb X) = \emptyset}} V_1^{O_1[h_4]}\cdots V_n^{O_n[h_4]}v^{m[h_4]}\cdot\tb y_4
\]for $\tb x_4 \in \tb S_4(\bb G_4)$. 

\begin{lem}\label{lem:Hbullet is homogeneous}
The map $H^\bullet\colon GC^\bullet(\bb G_4) \to GC^\bullet(\bb G_4)$ is homogeneous of degree $(1,0)$. 
\end{lem}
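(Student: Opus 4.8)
The plan is to follow the template of Lemma~\ref{lem:Pbullet is bigraded}, reducing the grading behavior of a hexagon to that of the rectangle it determines. Fix grid states $\tb x_4, \tb y_4 \in \tb S_4(\bb G_4)$ and a hexagon $h_4$ from $\tb x_4$ to $\tb y_4$ with $h_4 \cap \pi_4^{-1}(\bb X) = \emptyset$. It suffices to show that the monomial $V_1^{O_1[h_4]}\cdots V_n^{O_n[h_4]}v^{m[h_4]}\cdot\tb y_4$ appearing in $H^\bullet(\tb x_4)$ has Maslov grading $M(\tb x_4) + 1$ and Alexander grading $A(\tb x_4)$; together with Formulas~\ref{formula:maslov grading extension} and \ref{formula:alexander grading extension} and $\bb F[V_1,\ldots,V_n,v]$-linearity, this gives the stated degree.

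The main step is to pass to the associated rectangle. Because $h_4$ contains no $X$-marking, the discussion preceding the lemma tells us that the edge of $h_4$ joining the corners $a_4\in\pi_4^{-1}(a)$ and $b_4\in\pi_4^{-1}(b)$ is a side of a bigon $B$, and that $I_4(h_4) = h_4 + B$ is a rectangle with $[I_4(h_4)] \in [\Rect](\tb x_4,\tb y_4)$ and $m[I_4(h_4)] = m[h_4]$. Since $B$ carries exactly one $O$-marking and one $X$-marking, and since $O_j[\,\cdot\,]$ and $X_j[\,\cdot\,]$ are additive on formal sums of domains and descend to equivalence classes, I would record
\[
    \ts\sum_j O_j[I_4(h_4)] = \ts\sum_j O_j[h_4] + 1, \qquad \ts\sum_j X_j[I_4(h_4)] = \ts\sum_j X_j[h_4] + 1 = 1,
\]
the final equality because $h_4 \cap \pi_4^{-1}(\bb X) = \emptyset$. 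Applying Lemma~\ref{lem:grading formulas} to $I_4(h_4)$ and substituting then gives
\begin{align*}
    M(\tb x_4) - M(\tb y_4) &= 1 - 2\ts\sum_j O_j[I_4(h_4)] + 2m[I_4(h_4)] = -1 - 2\ts\sum_j O_j[h_4] + 2m[h_4],\\
    A(\tb x_4) - A(\tb y_4) &= \ts\sum_j X_j[I_4(h_4)] - \ts\sum_j O_j[I_4(h_4)] = -\ts\sum_j O_j[h_4].
\end{align*}
Feeding these into Formulas~\ref{formula:maslov grading extension} and \ref{formula:alexander grading extension} yields $M(V_1^{O_1[h_4]}\cdots V_n^{O_n[h_4]}v^{m[h_4]}\cdot\tb y_4) = M(\tb y_4) - 2\sum_j O_j[h_4] + 2m[h_4] = M(\tb x_4) + 1$ and $A(V_1^{O_1[h_4]}\cdots V_n^{O_n[h_4]}v^{m[h_4]}\cdot\tb y_4) = A(\tb y_4) - \sum_j O_j[h_4] = A(\tb x_4)$, as desired.

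I expect the only real obstacle to be recognizing that, unlike the pentagon computation of Lemma~\ref{lem:Pbullet is bigraded}, no case analysis is needed: there the correction between a pentagon and its rectangle had to be broken into triangles depending on whether the distinguished point $a$ was itself a corner of the ambient bigon, whereas a hexagon already has both $a_4$ and $b_4$ as corners on a common vertical circle, so the correction is a single honest bigon $B$. The points deserving a sentence of justification are that $I_4(h_4) = h_4 + B$ really is an embedded rectangle connecting the same pair of grid states (this is precisely the content of the definition of $I_4(h_4)$ recalled above, and relies on $\partial_\alpha B = 0$) and that the $O_j$- and $X_j$-multiplicities split additively across this decomposition, which is immediate from the definition of the multiplicity of a domain. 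Finally, one should note that an $X$-free hexagon cannot have $\tb x_4 = \tb y_4$, since otherwise $I_4(h_4)$ would be a square of side $n$ carrying $n \ge 2$ $X$-markings, contradicting $\sum_j X_j[I_4(h_4)] = 1$; thus the degenerate case of Lemma~\ref{lem:grading formulas} never intervenes, although it would give consistent formulas anyway.
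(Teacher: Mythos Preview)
Your proof is correct and follows exactly the same approach as the paper: pass from the $X$-free hexagon $h_4$ to the rectangle $I_4(h_4)=h_4+B$ by adjoining the unique bigon $B$ along the edge from $a_4$ to $b_4$, use that $B$ contributes one $O$- and one $X$-marking, and then invoke Lemma~\ref{lem:grading formulas}. Your additional remarks (that no case analysis is needed here, and that $\tb x_4\neq\tb y_4$) are accurate but not required for the argument.
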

\begin{proof}
Let $h_4$ be a hexagon from $\tb x_4$ to $\tb y_4$ satisfying $h_4 \cap \pi_4^{-1}(\bb X) = \emptyset$. Then the rectangle $r_4 = I_4(h_4)$ satisfies $\sum_j O_j[r_4] = 1 + \sum_j O_j[h_4]$ and $\sum_j X_j[r_4] = 1 + \sum_j X_j[r_4]$ because $r_4$ is obtained from $h_4$ by adding a bigon. By Lemma~\ref{lem:grading formulas}, we see that \begin{align*}
    M(\tb x_4) - M(\tb y_4) &= 1 - 2\ts\sum_j O_j[r_4] + 2m[r_4] = -1 - 2\ts\sum_j O_j[h_4] + 2m[h_4]\\
    A(\tb x_4) - A(\tb y_4) &= \ts\sum_j X_j[r_4] - \ts\sum_j O_j[r_4] = \ts\sum_j X_j[p_4] - \ts\sum_j O_j[p_4]
\end{align*}from which the result follows. 
\end{proof}

\begin{prop}\label{prop:Hbullet is a homotopy}
The map $H^\bullet$ provides a homotopy from $(P^\bullet)'\circ P^\bullet$ to the identity on $GC^\bullet(\bb G_4)$. 
\end{prop}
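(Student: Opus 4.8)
The plan is to prove the homotopy identity
$\partial_4^\bullet\circ H^\bullet + H^\bullet\circ\partial_4^\bullet + (P^\bullet)'\circ P^\bullet = \mathrm{Id}$
on $GC^\bullet(\bb G_4)$, which over $\bb F = \Z/2\Z$ is exactly the assertion that $H^\bullet$ is a chain homotopy from $(P^\bullet)'\circ P^\bullet$ to the identity. The degrees work out automatically: $H^\bullet$ is homogeneous of degree $(1,0)$ by Lemma~\ref{lem:Hbullet is homogeneous}, $\partial_4^\bullet$ of degree $(-1,0)$ by Lemma~\ref{lem:partialbullet4 is a differential}, and $P^\bullet$, $(P^\bullet)'$ are bigraded, so each of the three composites is homogeneous of degree $(0,0)$. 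Fixing grid states $\tb x_4,\tb z_4\in\tb S_4(\bb G_4)$, I would then analyze the coefficient of $\tb z_4$ in the left-hand side applied to $\tb x_4$.

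This coefficient is a sum over three families of pairs running from $\tb x_4$ to $\tb z_4$, each avoiding $\pi_4^{-1}(\bb X)$: a hexagon then a rectangle (from $\partial_4^\bullet\circ H^\bullet$), a rectangle then a hexagon (from $H^\bullet\circ\partial_4^\bullet$), and a pentagon into $\bb G_4'$ then a pentagon back into $\bb G_4$ (from $(P^\bullet)'\circ P^\bullet$). In every case the two objects juxtapose to a composite domain $\psi_4$ on $T_4$ having exactly one corner in $\pi_4^{-1}(a)$ and exactly one corner in $\pi_4^{-1}(b)$: a hexagon supplies both distinguished corners, whereas a pentagon into $\bb G_4'$ supplies the $a$-corner and a pentagon back supplies the $b$-corner. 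The multiplicities $O_j[\psi_4]$ and the integer $m[\psi_4]$ are additive under such a juxtaposition --- for the $O_j$ this is immediate, and for $m$ it follows from Lemma~\ref{lem:point measure is additive} after replacing each pentagon or hexagon by its associated rectangle, exactly as in Case (P-2) of Proposition~\ref{prop:Pbullet is a chain map} --- so the monomial $V_1^{O_1[\psi_4]}\cdots V_n^{O_n[\psi_4]}v^{m[\psi_4]}$ contributed by a pair depends only on $[\psi_4]$. It therefore suffices to show that every $[\psi_4]$ of this shape with $\psi_4\cap\pi_4^{-1}(\bb X)=\emptyset$ admits an even number of admissible decompositions of the three listed types, except when $\tb x_4=\tb z_4$, in which case exactly one such class exists, it has a single admissible decomposition, and it contributes the monomial $1$.

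The core of the proof is then the classification of these composite domains and their cuts, which I would carry out by adapting the commutation-invariance argument of Section 5.1 of \cite{OSS15} (the lemma producing the commutation homotopy), in the same manner as Proposition~\ref{prop:Pbullet is a chain map}. Passing to the $4$-fold cover is what makes this feasible: any hexagon, pentagon, or rectangle in $\bb G_4$ with an edge of length $\ge n$ sweeps across an $X$-marking, so the $X$-free objects above are all ``small'', and their juxtapositions are embedded (generalized-$L$-shaped) regions rather than the self-overlapping domains that obstruct the naive argument on $\bb G$ itself. With embeddedness in hand the planar case analysis transfers essentially verbatim from the commutation argument of \cite{OSS15}: organize by $k=\#(\tb x\setminus(\tb x\cap\tb z))\in\{0,1,2,3\}$; for $k\ge 1$ locate the distinguished $270^\circ$ corner of $\psi_4$ (or, in the degenerate configurations, the corner lying on a bigon boundary that determines the alternate cut) and cut in the two available directions, so that the decompositions of each fixed $[\psi_4]$ --- hence the pairs contributing each fixed monomial --- come in twos; for $k=0$, i.e.\ $\tb x_4=\tb z_4$, the condition $\psi_4\cap\pi_4^{-1}(\bb X)=\emptyset$ eliminates all but a single equivalence class, whose lone admissible decomposition has every $O_j[\psi_4]=0$ and $m[\psi_4]=0$ and hence contributes precisely $\mathrm{Id}$.

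I expect the main obstacle to be exactly this case analysis, and within it two points. First, one must confirm that lifting to $\bb G_4$ really does force every $X$-free composite to be embedded --- ruling out local multiplicity $2$ and the configurations in which one of $\beta_i^4,\gamma_i^4$ gets swept out, just as in Case (D-2) of Lemma~\ref{lem:partialbullet4 is a differential} --- so that the $270^\circ$-corner cutting argument applies unchanged and no spurious composite domains arise. Second, in the case $\tb x_4=\tb z_4$ one must identify the unique surviving domain explicitly and verify that all of its $O$-multiplicities and its point measure vanish, so that it contributes exactly $1$ rather than some nontrivial power of a $V_j$ or of $v$.
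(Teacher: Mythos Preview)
Your approach matches the paper's: prove the identity $\partial_4^\bullet\circ H^\bullet + H^\bullet\circ\partial_4^\bullet + (P^\bullet)'\circ P^\bullet = \mathrm{Id}$ by a case analysis on $k=\#(\tb x\setminus(\tb x\cap\tb z))$, cancelling contributions in pairs when $\tb x_4\neq\tb z_4$ and isolating a unique survivor when $\tb x_4=\tb z_4$. Two small corrections to the bookkeeping, however. First, the possible values of $k$ are $0$, $3$, and $4$ rather than $\{0,1,2,3\}$, and the $k=4$ case---which forces the pair to be a hexagon and a rectangle---is handled simply by swapping their order, not by locating a $270^\circ$ corner; only $k=3$ uses the cut. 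Second, it is not true that every $X$-free pentagon or hexagon in $\bb G_4$ is a lift from $T$ (see the second half of the proof of Lemma~\ref{lem:Pbullet is bigraded} and Figure~\ref{fig:otherfourhexagons}), so you cannot reduce the entire argument to the planar one in \cite{OSS15} by invoking smallness of the individual pieces. The paper instead argues directly in $\bb G_4$ that the composite in the $k=3$ case has a genuine $270^\circ$ corner to cut along (using $I_4$ to transport the additivity of $m$ via Lemma~\ref{lem:point measure is additive}, as you note), and only in the $k=0$ case does it show that both pieces must be lifts and then defer to the corresponding case of Lemma~5.1.6 in \cite{OSS15}.
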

\begin{proof}
We will prove that $\partial_4^\bullet\circ H^\bullet + H^\bullet\circ\partial_4^\bullet + (P^\bullet)'\circ P^\bullet = \mathrm{Id}$. Fix grid states $\tb x_4,\tb z_4$ in $\tb S_4(\bb G_4)$. Let $\mathcal{P}$ be the collection of pairs $[\theta],[\theta']$ satisfying $\theta \cap \pi_4^{-1}(\bb X) = \emptyset = \theta' \cap \pi_4^{-1}(\bb X)$ where one of the following holds: \begin{itemize}[nolistsep]
    \item[$\bullet$] $[\theta] \in [\Rect](\tb x_4,\tb y_4)$ and $[\theta'] \in [\Hex](\tb y_4,\tb z_4)$ for some $\tb y_4 \in \tb S_4(\bb G_4)$
    \item[$\bullet$] $[\theta] \in [\Hex](\tb x_4,\tb y_4)$ and $[\theta'] \in [\Rect](\tb y_4,\tb z_4)$ for some $\tb y_4 \in \tb S_4(\bb G_4)$
    \item[$\bullet$] $[\theta] \in [\Pent](\tb x_4,\tb y_4')$ and $[\theta'] \in [\Pent](\tb y_4',\tb z_4)$ for some $\tb y_4' \in \tb S_4(\bb G_4')$. 
\end{itemize}The coefficient of $\tb z_4$ in the expression $(\partial_4^\bullet\circ H^\bullet + H^\bullet\circ \partial_4^\bullet + (P^\bullet)'\circ P^\bullet)(\tb x_4)$ is \[
    \sum_{[\theta],[\theta'] \in \mathcal{P}} V_1^{O_1[\theta] + O_1[\theta']} \cdots V_n^{O_n[\theta]+O_n[\theta']}v^{m[\theta]+m[\theta']}.
\]When $\tb x_4 \neq \tb z_4$, we show that pairs in $\mathcal{P}$ contributing the same coefficient cancel in pairs, while when $\tb x_4 = \tb z_4$, we show that there is a unique pair in $\mathcal{P}$ which contributes the identity. Let $[\theta],[\theta']$ be a pair in $\mathcal{P}$. We again have three cases: 
\setlength\leftmargini{\dimexpr\leftmargini + 0.5em\relax}
\begin{itemize}
    \item[(H-1)] $\tb x \setminus (\tb x \cap \tb z)$ consists of four elements. The pair $[\theta],[\theta']$ must consist of a rectangle and a hexagon. There is a uniquely determined grid state $\tb y_4 \in \tb S_4(\bb G_4)$ for which $\theta'$ goes from $\tb x_4$ to $\tb y_4$ and for which $\theta$ goes from $\tb y_4$ to $\tb z_4$. It is clear that these two pairs contribute the same coefficient. 
    \item[(H-2)] $\tb x \setminus (\tb x \cap \tb z)$ consists of three elements. Let $\tb y_4'' \in \tb S_4(\bb G_4) \cup \tb S_4(\bb G_4')$ be the grid state for which $\theta$ goes from $\tb x_4$ to $\tb y_4''$ and $\theta'$ goes from $\tb y_4''$ to $\tb z_4$. Let $s$ be the unique point in the intersection of $\tb y'' \setminus (\tb x \cap \tb y'')$ and $\tb y'' \setminus (\tb z \cap \tb y'')$, and let $s_4$ be the unique preimage of $s$ under $\pi_4$ that is a corner point of $\theta$. We may assume that $\theta'$ also has a corner point at $s_4$, and we consider the composite $\theta\ast\theta'$. There are uniquely determined edges $E$ of $\theta$ and $E'$ of $\theta'$ which have $s_4$ as endpoints and for which either $E \subset E'$ or $E' \subset E$. If an endpoint of either $E$ or $E'$ lies in $p_4^{-1}(a) \cup p_4^{-1}(b)$, then it is clear that $E\neq E'$. Otherwise, the condition that $\tb x \setminus (\tb x \cap \tb z)$ consists of three elements guarantees that $E \neq E'$. 

    Let $F$ be the shorter of the two edges, and let $t_4$ be the endpoint of $F$ that is not $s_4$. Then $t_4$ is a $270^\circ$ corner of $\theta\ast\theta'$ and cutting in the other direction yields the decomposition $\Theta\ast\Theta'$. It is clear that $O_j[\theta] + O_j[\theta'] = O_j[\Theta] + O_j[\Theta']$ for $j = 1,\ldots,n$. That $m[\theta] + m[\theta'] = m[\Theta] + m[\Theta']$ follows from Lemma~\ref{lem:point measure is additive} and the identity $I_4(\theta) \ast I_4(\theta') = I_4(\Theta) \ast I_4(\Theta')$ obtained by constructing the domain $I_4(\theta) \ast I_4(\theta')$ from the collection of edges of $\theta\ast\theta'$ just as in Proposition~\ref{prop:Pbullet is a chain map}. 
    \item[(H-3)] $\tb x = \tb z$. We may assume that the representatives $\theta$ and $\theta'$ of $[\theta]$ and $[\theta']$, respectively, are choosen so that the southern edge of $\theta'$ coincides with the northern edge of $\theta$. 

    If the pair $\theta,\theta'$ consists of a rectangle and a hexagon, then the rectangle must have edge lengths less than $n$ as $\theta \cap \pi_4^{-1}(\bb X) = \emptyset = \theta' \cap \pi_4^{-1}(\bb X)$. It is clear then that $I_4(\theta)\ast I_4(\theta')$ is either an $n \times 1$ rectangle or a thin annulus. Since $\theta\ast\theta'$ differs from $I_4(\theta)\ast I_4(\theta')$ by a bigon which contains a single $X$-marking, we see that $I_4(\theta) \ast I_4(\theta')$ is not an annulus since an annulus contains two $X$-markings. Hence $\theta$ and $\theta'$ are lifts of a rectangle and hexagon in $T$. 

    Now assume that $\theta$ is a pentagon from $\tb x_4$ to $\tb y_4'$ and that $\theta'$ is a pentagon from $\tb y_4'$ to $\tb x_4$. Let $a_4$ be the fifth point of $\theta$, and let $b_4$ be the fifth point of $\theta'$. We may assume that both $a_4$ and $b_4$ lie in $\gamma_i^4 \cap \beta_i^4$. Of the two edges of $\theta$ having an endpoint at $a_4$, let $E$ be the one that leaves $a_4$ in a northward direction. Of the two edges of $\theta'$ having an endpoint at $b_4$, let $E'$ be the one that leaves $b_4$ in a southward direction. Then $E$ and $E'$ share an endpoint. If $E \cup E'$ is not the edge of a single bigon, then $\theta\ast\theta'$ contains a bigon and hence an $X$-marking. Hence $E \cup E'$ is the edge of a single bigon $B$. If it is the edge lying along $\gamma_i^4$, then $I_4(\theta)\ast I_4(\theta')$ is obtained from $\theta \ast\theta'$ by adjoining $B$. If $E \cup E'$ is the edge of $B$ lying along $\beta_i^4$, then $I_4^{-1}(\theta)\ast I_4^{-1}(\theta')$ is obtained by adjoining $B$ to $\theta\ast\theta'$. In any case, we see that $I_4(\theta)\ast I_4(\theta')$ must be an $n \times 1$ rectangle so that $\theta$ and $\theta'$ are lifts of pentagons in $T$. 

    Since in all cases $\theta$ and $\theta'$ are lifts of regions in $T$, the only possible pairs are those that arise in case (H-3) of Lemma 5.1.6 of \cite{OSS15}. In particular, there is a unique pair satisfying $O_j[\theta] + O_j[\theta'] = 0$ for $j = 1,\ldots, n$ and $m[\theta] + m[\theta'] = 0$. 
\end{itemize}We have shown that the coefficient of $\tb z_4$ is zero whenever $\tb z_4 \neq \tb x_4$, and that the coefficient of $\tb x_4$ is $1$. Thus the identity $\partial_4^\bullet\circ H^\bullet + H^\bullet\circ\partial_4^\bullet + (P^\bullet)'\circ P^\bullet = \mathrm{Id}$ is valid so $H^\bullet$ provides a homotopy from $(P^\bullet)' \circ P^\bullet$ to the identity. 
\end{proof}

\begin{thm}\label{thm:commutation invariance}
If $\bb G$ and $\bb G'$ are two grid diagrams that differ by a commutation move, then there is an isomorphism $GH^\bullet(\bb G) \to GH^\bullet(\bb G')$ of bigraded $\bb F[U,v]$-modules. 
\end{thm}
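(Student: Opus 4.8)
The plan is to upgrade $P^\bullet$ to a chain homotopy equivalence, pass to homology, and then transport the resulting isomorphism back to $\bb G$ and $\bb G'$ through the chain isomorphisms $J_4$ and $J_4'$ of Proposition~\ref{prop:twoComplexesIso}, finally checking that everything is compatible with the bigrading and with the $\bb F[U,v]$-module structure.

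First I would record the mirror of Proposition~\ref{prop:Hbullet is a homotopy}: running the hexagon construction with the roles of $\bb G_4$ and $\bb G_4'$ (equivalently, of the distinguished points $a$ and $b$) interchanged produces an $\bb F[V_1,\ldots,V_n,v]$-module map $(H^\bullet)'\colon GC^\bullet(\bb G_4') \to GC^\bullet(\bb G_4')$, homogeneous of degree $(1,0)$, satisfying $\partial_4^\bullet\circ (H^\bullet)' + (H^\bullet)'\circ\partial_4^\bullet + P^\bullet\circ (P^\bullet)' = \mathrm{Id}$; the proof is word-for-word that of Proposition~\ref{prop:Hbullet is a homotopy} with primed and unprimed objects swapped, so no new ideas are needed. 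Combining this with Proposition~\ref{prop:Hbullet is a homotopy}, with Lemma~\ref{lem:Pbullet is bigraded} and Proposition~\ref{prop:Pbullet is a chain map}, and with the already-noted analogues for $(P^\bullet)'$, we conclude that $P^\bullet$ and $(P^\bullet)'$ are mutually inverse chain homotopy equivalences. Hence $P^\bullet$ induces an isomorphism between the homology of $(GC^\bullet(\bb G_4),\partial_4^\bullet)$ and that of $(GC^\bullet(\bb G_4'),(\partial_4^\bullet)')$, and this isomorphism preserves both the Maslov and Alexander gradings because $P^\bullet$ is bigraded.

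Next I would check the module structure. A commutation move does not alter the $O$- or $X$-markings of the diagram; it only redraws one family of circles. Thus the variables $V_1,\ldots,V_n,v$ for $\bb G$ and for $\bb G'$ are canonically identified, and $P^\bullet$, being $\bb F[V_1,\ldots,V_n,v]$-linear by construction, intertwines the action of every $V_i$ and of $v$. Passing to homology and invoking Proposition~\ref{prop:V_i homotopic to V_j} to identify the action of $U$ with multiplication by any fixed $V_i$, the induced isomorphism becomes an isomorphism of bigraded $\bb F[U,v]$-modules between the homology of $(GC^\bullet(\bb G_4),\partial_4^\bullet)$ and that of $(GC^\bullet(\bb G_4'),(\partial_4^\bullet)')$. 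Finally, Proposition~\ref{prop:twoComplexesIso} supplies bigraded $\bb F[V_1,\ldots,V_n,v]$-linear chain isomorphisms $J_4\colon GC^\bullet(\bb G)\to GC^\bullet(\bb G_4)$ and $J_4'\colon GC^\bullet(\bb G')\to GC^\bullet(\bb G_4')$; these descend to bigraded $\bb F[U,v]$-module isomorphisms on homology, and composing gives $GH^\bullet(\bb G)\cong GH^\bullet(\bb G_4)\cong GH^\bullet(\bb G_4')\cong GH^\bullet(\bb G')$ as bigraded $\bb F[U,v]$-modules. This settles the case of a column commutation; a row commutation follows by the identical argument after rotating the grid by $90^\circ$, i.e.\ exchanging the roles of $\bo\alpha$ and $\bo\beta$.

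The substantive obstacles — showing that the pentagon and hexagon counts assemble into a chain map and a homotopy despite the non-embeddedness of composite domains — are already resolved in Propositions~\ref{prop:Pbullet is a chain map} and \ref{prop:Hbullet is a homotopy}; what remains is formal homological algebra together with the bookkeeping of the variable identification across the move. The only point requiring care is the verification of the mirror homotopy $(H^\bullet)'$, though this is routine given the symmetry of the setup.
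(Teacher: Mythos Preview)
Your proposal is correct and follows essentially the same approach as the paper: construct the mirror hexagon homotopy $(H^\bullet)'$ to show that $P^\bullet$ and $(P^\bullet)'$ are mutually inverse chain homotopy equivalences, then deduce the bigraded $\bb F[U,v]$-module isomorphism on homology. You are more explicit than the paper about invoking $J_4$ and Proposition~\ref{prop:V_i homotopic to V_j} to handle the passage from $GC^\bullet(\bb G_4)$ back to $GH^\bullet(\bb G)$ and the $U$-action, but this is bookkeeping the paper leaves implicit rather than a genuine difference in strategy.
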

\begin{proof}
Suppose $\bb G$ and $\bb G'$ differ by commuting two columns. Then $P^\bullet\colon GC^\bullet(\bb G_4) \to GC^\bullet(\bb G_4')$ is a bigraded chain map by Lemma~\ref{lem:Pbullet is bigraded} and Proposition~\ref{prop:Pbullet is a chain map}. It is a chain homotopy by Proposition~\ref{prop:Hbullet is a homotopy} and a suitable modification of Proposition~\ref{prop:Hbullet is a homotopy} showing that the bigraded chain map $(H^\bullet)'\colon GC^\bullet(\bb G_4') \to GC^\bullet(\bb G_4')$ defined by \[
    (H^\bullet)'(\tb x_4') = \sum_{\tb y_4' \in \tb S_4(\bb G_4')}\:\sum_{\substack{[h_4'] \in [\Hex](\tb x_4',\tb y_4')\\h_4' \cap \pi_4^{-1}(\bb X) = \emptyset}} V_1^{O_1[h_4]}\cdots V_n^{O_n[h_4]}v^{m[h_4]}\cdot\tb y_4'
\]provides a chain homotopy from $P^\bullet\circ (P^\bullet)'$ to the identity on $GC^\bullet(\bb G_4')$. Hence $P^\bullet$ induces a bigraded isomorphism $GH^\bullet(\bb G) \to GH^\bullet(\bb G')$. The case of a row commutation is handled in the same fashion. 
\end{proof}

The invariance of double-point enhanced grid homology under a switch follows by the same argument. The two grid diagrams differing by a switch are drawn on a single torus with two vertical circles curved. The $O$- and $X$-markings sharing a row in the column switch then lie in the same square determined by the straight curves, but are separated by the curved ones. 

\subsection{Stabilization invariance}
We again adapt the proof of stabilization invariance for unblocked grid homology. Let $\bb G'$ be obtained from $\bb G$ by a stabilization of type $X\colon SW$. Let $O_1$ be the new $O$-marking, and let $O_2$ be the $O$-marking in the row just south of the row containing $O_1$. Also let $X_1$ be the $X$-marking in the row containing $O_1$, and let $X_2$ be the $X$-marking in the row containing $O_2$, so that $\bb G$ is obtained from $\bb G'$ by destabilizing at the $2\times 2$ square containing $X_1$, $O_1$, and $X_2$. Let $c$ be the intersection point of the new horizontal and vertical circles in $\bb G'$. We write $\tb S(\bb G')$ as the disjoint union $\tb I(\bb G') \cup \tb N(\bb G')$ where $\tb I(\bb G')$ is the set of grid states $\tb x \in \tb S(\bb G')$ with $c\in \tb x$. This induces a decomposition of $\tb S_4(\bb G_4')$ as the disjoint union $\tb I_4(\bb G_4') \cup \tb N_4(\bb G_4')$ which then induces an $\bb F[V_1,\ldots,V_n,v]$-module splitting $GC^\bullet(\bb G_4') = \tb I_4 \oplus \tb N_4$ where $\tb I_4$ and $\tb N_4$ are the submodules generated by the grid states in $\tb I_4(\bb G_4')$ and $\tb N_4(\bb G_4')$, respectively. Since any rectangle from $\tb x_4 \in \tb N_4(\bb G_4')$ to $\tb y_4 \in \tb I_4(\bb G_4')$ must contain one of $X_1$ or $X_2$, it follows that $\tb N_4$ is a subcomplex. Thus we may write \[
    \partial_4^\bullet = \begin{pmatrix}
        \partial^{\tb I_4}_{\tb I_4} & 0\\[4pt]
        \partial^{\tb N_4}_{\tb I_4} & \partial^{\tb N_4}_{\tb N_4}
    \end{pmatrix}
\]which says that $GC^\bullet(\bb G_4')$ is the mapping cone of the chain complex $\partial_{\tb I_4}^{\tb N_4}\colon (\tb I_4,\partial_{\tb I_4}^{\tb I_4}) \to (\tb N_4,\partial_{\tb N_4}^{\tb N_4})$. 

By numbering the indeterminates suitably, we view $GC^\bullet(\bb G_4)$ as an $\bb F[V_2,\ldots,V_n,v]$-module. Let $GC^\bullet(\bb G_4)[V_1]$ be the promotion of $GC^\bullet(\bb G_4)$ to a $\bb F[V_1,\ldots,V_n,v]$-module as defined in Definition 5.2.15 in \cite{OSS15}. By Lemma 5.2.16 of \cite{OSS15}, we know that \begin{equation}
    H(\Cone(V_1 - V_2\colon GC^\bullet(\bb G_4)[V_1] \to GC^\bullet(\bb G_4)[V_1])) \cong GH^\bullet(\bb G)\label{eq:cone and homology}
\end{equation}as bigraded $\bb F[U,v]$-modules where the action of $U$ is induced by any $V_i$ for $i > 1$. We show that there is a quasi-isomorphism between $\Cone(V_1 - V_2)$ and $GC^\bullet(\bb G_4')$. In particular, we will show that the diagram \begin{equation}\label{diagram:stabilization commutative diagram}
    \begin{tikzcd}[sep=large]
        (\tb I_4,\partial_{\tb I_4}^{\tb I_4}) \ar[r,"\partial_{\tb I_4}^{\tb N_4}"] \ar[d,"e_4"] & (\tb N_4,\partial_{\tb N_4}^{\tb N_4}) \ar[d,"e_4\circ \mathcal{H}_{X_2}^{\tb I_4}"]\\
        GC^\bullet(\bb G_4)[V_1]\llbracket 1,1 \rrbracket \ar[r,"V_1 - V_2"] & GC^\bullet(\bb G_4)[V_1]
    \end{tikzcd}
\end{equation}commutes for certain quasi-isomorphisms $e_4\colon \tb I_4 \to GC^\bullet(\bb G_4)[V_1]\llbracket 1,1 \rrbracket$ and $\mathcal{H}_{X_2}^{\tb I_4}\colon \tb N_4 \to \tb I_4$. 

There is a natural one-to-one correspondence between grid states in $\tb S(\bb G)$ and grid states in $\tb I(\bb G')$ which associates to $\tb x \in \tb S(\bb G)$ the grid state $\tb x' = \tb x\cup \{c\} \in \tb I(\bb G')$. By Lemma 5.2.4 of \cite{OSS15}, we have that $M(\tb x') = M(\tb x) - 1$ and $A(\tb x') = A(\tb x) - 1$. This identification of $\tb S(\bb G)$ with $\tb I(\bb G')$ induces a corresponding identification of $\tb S_4(\bb G_4)$ with $\tb I_4(\bb G_4')$. 

\begin{lem}
The one-to-one correspondence between $\mathbf I_4(\bb G_4')$ and $\mathbf S_4(\bb G_4)$ induces an isomorphism $e_4\colon (\mathbf I_4,\partial^{\mathbf I_4}_{\mathbf I_4}) \to GC^\bullet(\bb G_4)[V_1]\llbracket 1,1\rrbracket$ of bigraded chain complexes over $\bb F[V_1,\ldots,V_n,v]$. 
\end{lem}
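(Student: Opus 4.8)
\emph{Proof proposal.}
The plan is to verify in turn that $e_4$ is an $\bb F[V_1,\ldots,V_n,v]$-module isomorphism, that it is bigraded, and that it intertwines the two differentials; only the last point has real content. For the module structure, recall that $\tb S_4(\bb G_4')$ splits as the disjoint union $\tb I_4(\bb G_4')\sqcup\tb N_4(\bb G_4')$, so $\tb I_4$ is the free $\bb F[V_1,\ldots,V_n,v]$-module on $\tb I_4(\bb G_4')$, while by definition (Definition 5.2.15 of \cite{OSS15}) the promotion $GC^\bullet(\bb G_4)[V_1]$ is free on $\tb S_4(\bb G_4)$ over the same ring. The map $e_4$ sends the basis element $\tb x_4'=\pi_4^{-1}(\tb x\cup\{c\})$ to $\tb x_4=\pi_4^{-1}(\tb x)$, so it is a module isomorphism. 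For the gradings, $M(\tb x_4')=M(\tb x')=M(\tb x)-1=M(\tb x_4)-1$ and similarly $A(\tb x_4')=A(\tb x_4)-1$, using Lemma 5.2.4 of \cite{OSS15} together with $M(\tb x_4)=M(\tb x)$ and $A(\tb x_4)=A(\tb x)$; hence after the shift $\llbracket 1,1\rrbracket$ the map $e_4$ preserves both the Maslov and Alexander gradings.

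The core of the proof is the following claim: for $\tb x_4',\tb y_4'\in\tb I_4(\bb G_4')$, collapsing the stabilization region gives a bijection from
\[
	\{\,[r_4']\in[\Rect](\tb x_4',\tb y_4') : r_4'\cap\pi_4^{-1}(\bb X)=\emptyset\,\}
\]
onto $\{\,[r_4]\in[\Rect](\tb x_4,\tb y_4) : r_4\cap\pi_4^{-1}(\bb X)=\emptyset\,\}$, under which $O_1[r_4']=0$, $O_j[r_4']=O_j[r_4]$ for $j\ge 2$, and $m[r_4']=m[r_4]$. Granting this, one compares $e_4\circ\partial^{\tb I_4}_{\tb I_4}(\tb x_4')$ with $\partial_4^\bullet(\tb x_4)$ term by term — noting that the differential of $GC^\bullet(\bb G_4)[V_1]$ is $\partial_4^\bullet$ extended $V_1$-linearly and carries no power of $V_1$ — and concludes that $e_4$ is a chain map. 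Being already a bigraded module isomorphism, it is then an isomorphism of bigraded chain complexes.

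To prove the claim, the key observation is that such a rectangle $r_4'$ must miss all three markings $O_1,X_1,X_2$ lying inside the $2\times 2$ destabilization square. It misses $X_1$ and $X_2$ by hypothesis. Suppose some representative $r_4'$ contained $O_1$; then it would contain the closed cell carrying $O_1$, hence the lift $c_4\in\pi_4^{-1}(c)$ of $c$ that is a corner of that cell. But $c_4\in\tb x_4'\cap\tb y_4'$, and a point common to both endpoints of a rectangle cannot lie on its boundary (as in the proof of Lemma~\ref{lem:computations of m}), so $c_4\in\Int(r_4')$. Then $r_4'$ contains a neighborhood of $c_4$, hence meets — and therefore contains — all four cells of the $2\times 2$ square, in particular the cell carrying $X_1$, contradicting $r_4'\cap\pi_4^{-1}(\bb X)=\emptyset$. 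Once $r_4'$ avoids $O_1,X_1,X_2$, the destabilization collapse carries $[r_4']$ to a rectangle $[r_4]$ in $\bb G_4$, and since it only deletes a marking-free strip of the new row and column, it preserves the $O$- and $X$-multiplicities of the surviving markings and the point-measures defining $m$, exactly as in the unblocked case (Section 5.2 of \cite{OSS15}); as $O_1$ lies in the new row, $O_1[r_4']=0$. The inverse map lifts a $\bb G_4$-rectangle back into $\bb G_4'$ away from the stabilization region, where it automatically connects $\tb I_4$-states and avoids $X_1,X_2$, so we obtain the desired bijection.

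The main obstacle is precisely this last step. In the unblocked theory one also knows $r_4'$ is empty, which immediately forbids $c_4\in\Int(r_4')$; in the double-point enhanced theory the differential counts non-empty rectangles, so one must instead exclude any interaction with the stabilization region using only the $X$-avoidance hypothesis together with the rigidity furnished by the four lifts of $c$ sitting in every grid state of $\tb I_4(\bb G_4')$. With the claim in hand the remaining bookkeeping is routine, and the other stabilization types (as well as the direction of the argument that recovers $GH^\bullet(\bb G)$ from $\Cone(V_1-V_2)$) are handled in the same way.
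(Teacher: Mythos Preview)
Your proof is correct and follows the same approach as the paper's: both establish the bijection between $X$-avoiding rectangles connecting $\tb I_4$-states in $\bb G_4'$ and $X$-avoiding rectangles in $\bb G_4$, with matching $O$-multiplicities and $m$-values. The paper's proof simply asserts this bijection and its properties, while you supply the missing justification for $O_1[r_4']=0$ via the $c_4\in\Int(r_4')$ argument---a point the paper leaves implicit but which, as you correctly note, is the one place where the empty-rectangle hypothesis of the ordinary theory must be replaced by the $X$-avoidance hypothesis in the double-point enhanced setting.
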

\begin{proof}
Recall that for a bigraded chain complex $C$, the chain complex $C\llbracket 1,1\rrbracket$ has $(C\llbracket 1,1\rrbracket)_{d,s} = C_{d+1,s+1}$. Hence $e_4$ is bigraded because $M(\tb x_4') = M(\tb x_4) - 1$ and $A(\tb x_4') = A(\tb x_4) - 1$. Given a rectangle $r_4$ from $\tb x_4$ to $\tb y_4$ in $\bb G_4$ satisfying $r_4 \cap \pi_4^{-1}(\bb X) = \emptyset$, there is a corresponding rectangle $r_4'$ from $\tb x_4'$ to $\tb y_4'$ also satisfying $r_4' \cap \pi_4^{-1}(\bb X) = \emptyset$ for which $O_1[r_4'] = 0$, $O_j[r_4'] = O_j[r_4]$ for $j = 2,\ldots,n$ and $m[r_4'] = m[r_4]$. This establishes a bijection between equivalence classes in $[\Rect](\tb x_4,\tb y_4)$ disjoint from $\pi_4^{-1}(\bb X)$ and equivalence classes in $[\Rect](\tb x_4',\tb y_4')$ also disjoint from $\pi_4^{-1}(\bb X)$. It follows that $e_4$ is a chain map, and hence an isomorphism of chain complexes. 
\end{proof}

Now let $\mathcal{H}_{X_2}^{\tb I_4}\colon \tb N_4\to \tb I_4$ be the $\bb F[V_1,\ldots,V_n,v]$-module map defined by \[
    \mathcal{H}_{X_2}^{\tb I_4}(\tb x_4') = \sum_{\tb y_4' \in \tb I_4(\bb G_4')} \: \sum_{\substack{[r_4]\in[\Rect](\tb x_4',\tb y_4')\\ X_2[r_4] = 1\\ X_j[r_4] = 0\text{ for } j \neq 2}} V_1^{O_1[r_4]}\cdots V_n^{O_n[r_4]}v^{m[r_4]}\cdot\tb y_4'
\]for $\tb x_4' \in \tb N_4$. By our choice of labelling, the marking $X_2$ is directly south of the marking $O_1$ and is directly southeast of the point $c$. Note that $\mathcal{H}_{X_2}^{\tb I_4}$ is a component of $\mathcal{H}_{X_2}^\bullet$ defined in Proposition~\ref{prop:V_i homotopic to V_j}. 
\begin{lem}
The map $\mathcal{H}_{X_2}^{\tb I_4}\colon (\tb N_4,\partial_{\tb N_4}^{\tb N_4}) \to (\tb I_4,\partial^{\tb I_4}_{\tb I_4})\llbracket -1,-1 \rrbracket$ is a chain homotopy equivalence of bigraded chain complexes.
\end{lem}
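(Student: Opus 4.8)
There are two things to establish: that $\mathcal{H}_{X_2}^{\tb I_4}$ is a bigraded chain map of the asserted degree, and that it is a chain homotopy equivalence. The first is essentially a corollary of Proposition~\ref{prop:V_i homotopic to V_j}. Indeed, $\mathcal{H}_{X_2}^{\tb I_4}$ is precisely the $\tb N_4\to\tb I_4$ component of the endomorphism $\mathcal{H}_{X_2}^\bullet$ of $GC^\bullet(\bb G_4')$ furnished by that proposition (applied to $\bb G_4'$, in which $V_1$ and $V_2$ are consecutive via $X_2$), and since $\tb N_4$ is a subcomplex so that $\partial_4^\bullet$ has the displayed lower-triangular block form, extracting the $\tb N_4\to\tb I_4$ component of the identity $\partial_4^\bullet\circ\mathcal{H}_{X_2}^\bullet + \mathcal{H}_{X_2}^\bullet\circ\partial_4^\bullet = V_1 + V_2$ yields exactly $\partial_{\tb I_4}^{\tb I_4}\circ\mathcal{H}_{X_2}^{\tb I_4} + \mathcal{H}_{X_2}^{\tb I_4}\circ\partial_{\tb N_4}^{\tb N_4} = 0$. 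The bidegree $(-1,-1)$ of $\mathcal{H}_{X_2}^\bullet$ is recorded in Proposition~\ref{prop:V_i homotopic to V_j}, and agrees with the asserted shift by $\llbracket -1,-1\rrbracket$.

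For the homotopy equivalence I would transplant to the $4$-fold cover the proof of the corresponding statement for unblocked grid homology (the destabilization argument of Section 5.2 of \cite{OSS15}). That argument reduces the claim to a local model near the destabilization $2\times2$ block by means of a filtration: one equips $\tb I$ and $\tb N$ (here $\tb I_4$ and $\tb N_4$) with a filtration, pulled back along $\pi_4$ so that it takes only finitely many values on grid states, for which $\partial_{\tb I_4}^{\tb I_4}$, $\partial_{\tb N_4}^{\tb N_4}$, and $\mathcal{H}_{X_2}^{\tb I_4}$ are all filtered. Verifying that these three maps are filtered, and identifying the induced maps on the associated graded, is a routine local inspection near the $2\times2$ block, using Lemma~\ref{lem:grading formulas} for the gradings and Lemma~\ref{lem:point measure is additive} for the additivity of $m$ under juxtaposition of domains, entirely parallel to the computations in Propositions~\ref{prop:Pbullet is a chain map} and~\ref{prop:Hbullet is a homotopy}.

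The crucial point, and the place where passing to $\bb G_4'$ pays off, is that every trivial rectangle through $X_2$ near the destabilization region, and every composite domain arising in the verification, is genuinely embedded in $\bb G_4'$: the factor-of-two self-overlap that obstructs the naive argument on $\bb G'$ (as in Figure~\ref{fig:orectdecomp}) does not occur. Granting this, the associated-graded statement reduces verbatim to the computation in \cite{OSS15}, which realizes the induced map on the associated graded as a chain homotopy equivalence with an explicit homotopy built from the trivial rectangles. One then concludes by the standard principle (developed in \cite{OSS15}) that a filtered chain map between suitably bounded complexes of free $\bb F[V_1,\ldots,V_n,v]$-modules that induces a chain homotopy equivalence on the associated graded is itself a (filtered) chain homotopy equivalence.

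I expect the main obstacle to be faithfully transplanting OSS15's local picture to $\bb G_4'$, namely the case-by-case check that all domains entering the ``filtered'' and associated-graded verifications are embedded in the cover, together with confirming that the filtration pulled back along $\pi_4$ is bounded and compatible with the module structure so that the concluding homological-algebra step is legitimate. Once embeddedness is secured, the enumerations and cancellations are identical to the unblocked case.
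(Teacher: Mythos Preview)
Your first paragraph is correct and coincides with the paper's argument: the chain-map identity $\partial_{\tb I_4}^{\tb I_4}\circ\mathcal{H}_{X_2}^{\tb I_4} + \mathcal{H}_{X_2}^{\tb I_4}\circ\partial_{\tb N_4}^{\tb N_4} = 0$ is exactly the upper-right block of the matrix form of $\partial_4^\bullet\circ\mathcal{H}_{X_2}^\bullet + \mathcal{H}_{X_2}^\bullet\circ\partial_4^\bullet = V_1 - V_2$, and the bidegree comes from Proposition~\ref{prop:V_i homotopic to V_j}.

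For the homotopy equivalence, however, the paper does \emph{not} use a filtration/associated-graded argument. It stays with the same matrix identity and extracts two further block equations (after noting that the $\tb I_4\to\tb I_4$ block of $\mathcal{H}_{X_2}^\bullet$ vanishes, since any admissible rectangle through $X_2$ has a point of $\pi_4^{-1}(c)$ on its boundary): the $\tb I_4\to\tb I_4$ block gives $\mathcal{H}_{X_2}^{\tb I_4}\circ\partial_{\tb I_4}^{\tb N_4} = V_1 - V_2$, and the $\tb N_4\to\tb N_4$ block gives $\partial_{\tb I_4}^{\tb N_4}\circ\mathcal{H}_{X_2}^{\tb I_4} + \partial_{\tb N_4}^{\tb N_4}\circ\mathcal{H}_{X_2}^{\tb N_4,\tb N_4} + \mathcal{H}_{X_2}^{\tb N_4,\tb N_4}\circ\partial_{\tb N_4}^{\tb N_4} = V_1 - V_2$. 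The paper then defines $\mathcal{H}_{O_1}^\bullet\colon \tb I_4\to\tb N_4$ and $\mathcal{H}_{O_1,X_2}^\bullet\colon \tb N_4\to\tb N_4$ by taking $\partial_{\tb I_4}^{\tb N_4}$ and $\mathcal{H}_{X_2}^{\tb N_4,\tb N_4}$, restricting to rectangles with $O_1[r_4]=1$, and setting $V_1=1$. Applying this ``evaluate $V_1$ at $1$'' operation to the two block equations immediately yields $\mathcal{H}_{X_2}^{\tb I_4}\circ\mathcal{H}_{O_1}^\bullet = \mathrm{Id}_{\tb I_4}$ and $\mathcal{H}_{O_1}^\bullet\circ\mathcal{H}_{X_2}^{\tb I_4} + \partial_{\tb N_4}^{\tb N_4}\circ\mathcal{H}_{O_1,X_2}^\bullet + \mathcal{H}_{O_1,X_2}^\bullet\circ\partial_{\tb N_4}^{\tb N_4} = \mathrm{Id}_{\tb N_4}$, which is the explicit chain homotopy equivalence.

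Your filtration route is plausible in outline, but it is both longer and vaguer: you have not specified the filtration, and the ``embeddedness in the cover'' checks you anticipate are precisely what the paper's method avoids. The paper's approach requires no new domain analysis beyond what was already done for Proposition~\ref{prop:V_i homotopic to V_j}; everything is pure algebra on the block decomposition. If you want a complete proof, I would replace your second and third paragraphs with the explicit-inverse construction above.
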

\begin{proof}
The map $\mathcal{H}_{X_2}^\bullet\colon GC^\bullet(\bb G_4') \to GC^\bullet(\bb G_4')$ defined in Proposition~\ref{prop:V_i homotopic to V_j} satisfies \[
    \partial_4^\bullet\circ\mathcal{H}_{X_2}^\bullet + \mathcal{H}_{X_2}^\bullet\circ\partial_4^\bullet = V_1 - V_2
\]because $V_1$ and $V_2$ are consecutive. Writing each map as a $2\times 2$ matrix with respect to the direct sum splitting $GC^\bullet(\bb G_4') = \tb I_4 \oplus \tb N_4$ we find that \[
    \begin{pmatrix}
        \partial^{\tb I_4}_{\tb I_4} & 0\\[4pt]
        \partial^{\tb N_4}_{\tb I_4} & \partial^{\tb N_4}_{\tb N_4}
    \end{pmatrix}\begin{pmatrix}
        0 & \mathcal{H}_{X_2}^{\tb I_4}\\[4pt]
        \mathcal{H}_{X_2}^{\tb N_4} & \mathcal{H}_{X_2}^{\tb N_4,\tb N_4}
    \end{pmatrix} + \begin{pmatrix}
        0 & \mathcal{H}_{X_2}^{\tb I_4}\\[4pt]
        \mathcal{H}_{X_2}^{\tb N_4} & \mathcal{H}_{X_2}^{\tb N_4,\tb N_4}
    \end{pmatrix}\begin{pmatrix}
        \partial^{\tb I_4}_{\tb I_4} & 0\\[4pt]
        \partial^{\tb N_4}_{\tb I_4} & \partial^{\tb N_4}_{\tb N_4}
    \end{pmatrix} = \begin{pmatrix}
        V_1 - V_2 & 0\\[4pt]0 & V_1 - V_2
    \end{pmatrix}.
\]A rectangle $r_4$ in $\bb G_4'$ with $X_2[r_4] = 1$ but $X_j[r_4] = 0$ for $j \neq 2$ must contain a point of $\pi_4^{-1}(c)$ in its boundary. It follows that the component of $\mathcal{H}_{X_2}^\bullet$ from $\tb I_4$ to $\tb I_4$ is zero. The matrix equation above yields the equations \begin{align}
    \mathcal{H}_{X_2}^{\tb I_4}\circ \partial_{\tb I_4}^{\tb N_4} &= V_1 - V_2\label{eq:topleft}\\
    \partial_{\tb I_4}^{\tb I_4}\circ \mathcal{H}_{X_2}^{\tb I_4} + \mathcal{H}_{X_2}^{\tb I_4}\circ\partial_{\tb N_4}^{\tb N_4} &= 0\label{eq:topright}\\
    \partial^{\tb N_4}_{\tb N_4}\circ \mathcal{H}_{X_2}^{\tb N_4} + \mathcal{H}_{X_2}^{\tb N_4}\circ\partial_{\tb I_4}^{\tb I_4} + \mathcal{H}_{X_2}^{\tb N_4,\tb N_4}\circ \partial_{\tb I_4}^{\tb N_4} &= 0\label{eq:bottomleft}\\
    \partial^{\tb N_4}_{\tb I_4}\circ \mathcal{H}_{X_2}^{\tb I_4} + \partial_{\tb N_4}^{\tb N_4}\circ\mathcal{H}_{X_2}^{\tb N_4,\tb N_4} + \mathcal{H}_{X_2}^{\tb N_4,\tb N_4}\circ \partial^{\tb N_4}_{\tb N_4} &= V_1 - V_2.\label{eq:bottomright}
\end{align}Equation~\ref{eq:topright} shows that $\mathcal{H}_{X_2}^{\tb I_4}$ is a chain map. The grading shift follows from the fact that $\mathcal{H}_{X_2}^\bullet$ is homogeneous of degree $(-1,-1)$. 

Consider the $\bb F[V_1,\ldots,V_n,v]$-module map $\mathcal{H}_{O_1}^\bullet\colon \tb I_4 \to \tb N_4$ defined by \[
    \mathcal{H}_{O_1}^\bullet(\tb x_4') = \sum_{\tb y_4' \in \tb N_4(\bb G_4')} \: \sum_{\substack{[r_4]\in[\Rect](\tb x_4',\tb y_4')\\r_4 \cap \pi_4^{-1}(\bb X) = \emptyset\\ O_1[r_4] = 1}} V_2^{O_2[r_4]}\cdots V_n^{O_n[r_4]} v^{m[r_4]}\cdot\tb y_4'
\]for $\tb x_4' \in \tb I_4(\bb G_4')$. Observe that $\mathcal{H}_{O_1}^\bullet$ is the differential $\partial_{\tb I_4}^{\tb N_4}$ except that it only counts rectangles that contain an $O$-marking in $\pi_4^{-1}(O_1)$ and it evaluates $V_1$ to $1$. From Equation~\ref{eq:topleft} it follows that \begin{equation}
    \mathcal{H}_{X_2}^{\tb I_4}\circ \mathcal{H}_{O_1}^\bullet = \mathrm{Id}_{\tb I_4}.\label{eq:homotopy there}
\end{equation}Next let $\mathcal{H}_{O_1,X_2}^\bullet\colon \tb N_4 \to \tb N_4$ be defined by \[
    \mathcal{H}_{O_1,X_2}^\bullet(\tb x_4') = \sum_{\tb y_4' \in \tb S(\bb G_4')} \: \sum_{\substack{[r_4]\in[\Rect](\tb x_4',\tb y_4')\\ O_1[r_4] = 1 = X_2[r_4]\\ X_j[r_4] = 0 \text{ for } j \neq 2}} V_2^{O_2[r_4]}\cdots V_n^{O_n[r_4]}v^{m[r_4]}\cdot\tb y_4'
\]for $\tb x_4' \in \tb N_4(\bb G_4')$. Then $\mathcal{H}_{O_1,X_2}^\bullet$ is just $\mathcal{H}_{X_2}^{\tb N_4,\tb N_4}$ except that it only counts rectangles containing an $O$-marking in $\pi_4^{-1}(O_1)$ and it evalues $V_1$ to $1$. It follows from Equation~\ref{eq:bottomright} that \begin{equation}
    \mathcal{H}_{O_1}^\bullet\circ\mathcal{H}_{X_2}^{\tb I_4} + \partial_{\tb N_4}^{\tb N_4}\circ \mathcal{H}_{O_1,X_2}^\bullet + \mathcal{H}_{O_1,X_2}^\bullet\circ\partial_{\tb N_4}^{\tb N_4} = \mathrm{Id}_{\tb N_4}.\label{eq:homotopy back}
\end{equation}Equations~\ref{eq:homotopy there} and \ref{eq:homotopy back} establish that $\mathcal{H}_{X_2}^{\tb I_4}$ is a chain homotopy equivalence. 
\end{proof}

\begin{thm}\label{thm:stabilization invariance}
    If $\bb G'$ is obtained from $\bb G$ by stabilization, then there is an isomorphism of bigraded $\bb F[U,v]$-modules $GH^\bullet(\bb G) \cong GH^\bullet(\bb G')$. 
\end{thm}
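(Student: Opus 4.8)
The plan is to assemble the ingredients already in place in this subsection. Recall that $(GC^\bullet(\bb G_4'),\partial_4^\bullet)$ has been exhibited as the mapping cone of the chain map $\partial_{\tb I_4}^{\tb N_4}\colon (\tb I_4,\partial_{\tb I_4}^{\tb I_4}) \to (\tb N_4,\partial_{\tb N_4}^{\tb N_4})$, and Equation~\ref{eq:cone and homology} identifies the homology of $\Cone(V_1 - V_2\colon GC^\bullet(\bb G_4)[V_1] \to GC^\bullet(\bb G_4)[V_1])$ with $GH^\bullet(\bb G)$ as a bigraded $\bb F[U,v]$-module. Before anything else I would observe that it suffices to treat a stabilization of type $X\colon SW$: as in Chapter~5 of \cite{OSS15}, a general stabilization is realized by an $X\colon SW$ stabilization together with commutation and switch moves, and invariance under those is already established (Theorem~\ref{thm:commutation invariance} and the remark following it). This is precisely the situation set up above.

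Next I would check that Diagram~\ref{diagram:stabilization commutative diagram} commutes. This is immediate from Equation~\ref{eq:topleft}: since $e_4$ is $\bb F[V_1,\ldots,V_n,v]$-linear,
\[
    (e_4\circ\mathcal{H}_{X_2}^{\tb I_4})\circ\partial_{\tb I_4}^{\tb N_4} = e_4\circ(\mathcal{H}_{X_2}^{\tb I_4}\circ\partial_{\tb I_4}^{\tb N_4}) = e_4\circ(V_1 - V_2) = (V_1 - V_2)\circ e_4.
\]
The two preceding lemmas say that $e_4$ is an isomorphism of bigraded chain complexes over $\bb F[V_1,\ldots,V_n,v]$ and that $\mathcal{H}_{X_2}^{\tb I_4}$ is a bigraded chain homotopy equivalence, so both vertical maps of Diagram~\ref{diagram:stabilization commutative diagram} are bigraded quasi-isomorphisms of $\bb F[V_1,\ldots,V_n,v]$-module complexes.

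I would then invoke the standard homological-algebra lemma that a commuting square of bigraded chain complexes whose vertical arrows are quasi-isomorphisms induces a quasi-isomorphism between the mapping cones of its horizontal arrows (the usual comparison of the long exact sequences of the two cones via the five lemma; cf.\ Chapter~5 of \cite{OSS15}). Applied here, this yields a bigraded quasi-isomorphism
\[
    GC^\bullet(\bb G_4') \;=\; \Cone(\partial_{\tb I_4}^{\tb N_4}) \;\longrightarrow\; \Cone\bigl(V_1 - V_2\colon GC^\bullet(\bb G_4)[V_1] \to GC^\bullet(\bb G_4)[V_1]\bigr)
\]
of $\bb F[V_1,\ldots,V_n,v]$-module complexes; passing to the $\bb F[U,v]$-module structure in which $U$ acts by any $V_i$ with $i>1$, it remains a bigraded quasi-isomorphism.

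To finish I would chain the resulting identifications: taking homology gives $GH^\bullet(\bb G_4') \cong H(\Cone(V_1 - V_2)) \cong GH^\bullet(\bb G)$ as bigraded $\bb F[U,v]$-modules, the last isomorphism being Equation~\ref{eq:cone and homology}, while Proposition~\ref{prop:twoComplexesIso} provides a bigraded $\bb F[V_1,\ldots,V_n,v]$-module isomorphism $GC^\bullet(\bb G')\cong GC^\bullet(\bb G_4')$, hence $GH^\bullet(\bb G')\cong GH^\bullet(\bb G_4')$. Combining, $GH^\bullet(\bb G)\cong GH^\bullet(\bb G')$ as bigraded $\bb F[U,v]$-modules. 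I expect the only real friction to be bookkeeping: tracking that the grading shifts $\llbracket 1,1\rrbracket$ and $\llbracket -1,-1\rrbracket$ in Diagram~\ref{diagram:stabilization commutative diagram} are consistent (they are, since $e_4$ shifts bidegree by $(1,1)$ and $\mathcal{H}_{X_2}^{\tb I_4}$ by $(-1,-1)$, so the horizontal maps have matching bidegree), and that every map in the chain of identifications is $\bb F[U,v]$-linear. All of the genuinely new work — the $4$-fold cover, the pentagon and hexagon counts, and the algebraic identities in the preceding lemmas — has already been carried out.
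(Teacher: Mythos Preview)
Your proposal is correct and follows essentially the same route as the paper's proof: reduce to the $X\colon SW$ case via commutation and switch invariance, use Equation~\ref{eq:topleft} to verify that Diagram~\ref{diagram:stabilization commutative diagram} commutes, invoke the mapping-cone comparison lemma (Lemma~5.2.12 of \cite{OSS15}) to obtain a quasi-isomorphism $\Cone(\partial_{\tb I_4}^{\tb N_4})\to\Cone(V_1-V_2)$, and conclude via Equation~\ref{eq:cone and homology}. Your write-up is somewhat more explicit about the commutativity check and the grading bookkeeping, but there is no substantive difference in strategy.
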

\begin{proof}
First assume that the stabilization is of type $X:SW$. Then by Equation~\ref{eq:topleft}, Diagram~\ref{diagram:stabilization commutative diagram} commutes. By Lemma 5.2.12 of \cite{OSS15}, we obtain a quasi-isomorphism from $\Cone(\partial_{\tb I_4}^{\tb N_4}) = GC^\bullet(\bb G_4')$ to $\Cone(V_1 - V_2)$. In combination with Equation~\ref{eq:cone and homology}, we obtain the required isomorphism. The other stabilization types are reduced to the case of $X:SW$ by Corollary 3.2.3 by a sequence of commutation moves and switches. 
\end{proof}

Since commutation invariance (Theorem~\ref{thm:commutation invariance}) and stabilization invariance (Theorem~\ref{thm:stabilization invariance}) have been establish, Cromwell's Theorem allows us to conclude that the isomorphism class of the bigraded $\bb F[U,v]$-module $GH^\bullet(\bb G)$ depends only on the knot $K$ and not the particular grid diagram $\bb G$. In particular, we write $GH^\bullet(K)$ for this knot invariant.

\section*{Further Remarks}

We note that the invariance proof of $GH^\bullet(K)$ for knots also works for links. If $L$ is an $\ell$-component oriented link, then $GH^\bullet(L)$ is defined to be the homology of the bigraded complex $GC^\bullet(L)$, thought of as a bigraded module over $\bb F[U_1,\ldots,U_\ell,v]$ where the action of $U_i$ is induced by multiplication by $V_{j_i}$ where $O_{j_i}$ is an $O$-marking on the $i$th component of $L$. It is a consequence of Proposition~\ref{prop:V_i homotopic to V_j} that the action of $U_i$ is independent of the choice of $V_{j_i}$, and the proof of invariance of $GH^\bullet(L)$ under commutation and stabilization is the same as the case for knots. 

In this paper, we have worked with coefficients in $\bb F = \Z/2\Z$. The authors expect that the invariant can be lifted to coefficients in $\Z$ through a proper choice of sign assigments for rectangles, pentagons, and hexagons in $\bb G_4$ (cf. Chapter 15 of \cite{OSS15}). The authors also expect that there is a skein exact sequence for double-point enhanced grid homology (cf. Chapter 9 of \cite{OSS15}) that can be established by working in $\bb G_4$. 

As noted in the introduction, there are no known examples where the double-point enhanced invariant provides strictly more information than the ordinary invariant, at least to the authors' knowledge. More precisely, consider the bigraded $\bb F[U,v]$-module $GH^-(K)[v] = GH^-(K) \otimes_{\bb F[U]} \bb F[U,v]$. Explicitly, its elements are finite sums $\sum_k m_k \otimes v^k$ with $m_k \in GH^-(K)$ and $k \ge 0$, where $U$ acts on the first factor and $v$ acts on the second. This module is bigraded by declaring $m \otimes v^k$ to be homogeneous of degree $(d + 2k, s)$ if $m$ is homogeneous of degree $(d,s)$ (recall that multiplication by $v$ in $GH^\bullet(K)$ increases Maslov grading by two and preserves Alexander grading, and cf. Definition 5.2.15 of \cite{OSS15}). It is not known whether $GH^\bullet(K)$ and $GH^-(K)[v]$ are in general isomorphic as bigraded $\bb F[U,v]$-modules. 

\nocite{*}
\raggedright

\bibliography{draftbib}
\bibliographystyle{alpha}

\end{document}